\DeclareMathOperator*{\esssup}{ess\,sup}
\DeclareMathOperator{\tr}{tr}
\DeclareMathOperator{\diam}{diam}
\DeclareMathOperator{\area}{area}
\DeclareMathOperator{\supp}{supp}
\crefname{prop}{Property}{Properties}
\title{Kernel Smoothing Operators on Thick Open Domains\thanks{Submitted to the editors DATE.
\funding{The authors acknowledge support from ONR MURI N00014-19-1-242.}}}
\author{Dimitrios  Giannakis\thanks{Department of Mathematics, Dartmouth College, Hanover, NH
  (\email{dimitrios.giannakis@dartmouth.edu}).}
\and Mohammad Javad Latifi Jebelli\thanks{Department of Mathematics, Dartmouth College, Hanover, NH
  (\email{mohammad.javad.latifi.jebelli@dartmouth.edu}).}}
\begin{document}
\nolinenumbers

\maketitle

\begin{abstract}
    We define the notion of a thick open set $\Omega$ in a Euclidean space and show that a local Hardy-Littlewood inequality holds in $L^p(\Omega)$, $p \in (1, \infty]$. We then establish pointwise and $L^p(\Omega)$ convergence for families of convolution operators with a Markov normalization on $\Omega$. We demonstrate application of such smoothing operators to piecewise-continuous density, velocity, and stress fields from discrete element models of sea ice dynamics.
\end{abstract}

\begin{keywords}
Convolution operators, kernel integral operators, Hardy--Littlewood inequality, approximation of the identity, discrete element models, sea ice
\end{keywords}

\begin{MSCcodes}
42B25  , 62G07  , 44A35
\end{MSCcodes}

\section{Introduction}

Let $h \in L^1(\mathbb{R}^n)$ be a non-negative, radially symmetric function with $\int_{\mathbb R^n} h(x)\, dx = 1$, and let $h_{\epsilon}(x)= \epsilon^{-n} h(x/\epsilon)$ for $\epsilon>0$. The  Hardy--Littlewood maximal operator of convolution type, $M_h$, is defined as
\begin{equation}
    M_h f(x) = \sup_{\epsilon>0} \lvert h_{\epsilon}*f(x) \rvert,
    \label{eq:maximal_basic}
\end{equation}
where $f\in L^p(\mathbb{R}^n)$ with $p>1$. The Hardy--Littlewood maximal inequality,
$$
\| M_h f \|_{L^p(\mathbb R^n)} \leq A \| f \|_{L^p(\mathbb R^n)}, \quad A > 0,
$$
can be employed to prove pointwise and $L^p$ convergence of $h_{\epsilon}*f$ to $f$ as $\epsilon \rightarrow 0$. Results of this type find applications in many areas, including function approximation, analysis of partial differential equations (PDEs), and harmonic analysis.

The theory of Hardy-Littlewood maximal functions has been studied and generalized intensively over the past decades; see e.g., Stein \cite{Stein93}. Examples include maximal operators on the Sobolev spaces $W^{1,p}(\mathbb R^n)$, where associated boundedness results are employed in PDE regularity theory \cite{Kinnunen1997, Kinnunen2015,Lerner2008,Shi2023, Cristian2021, Carneiro2013, Carneiro2019}. A more general definition of the maximal operator in the context of a metric measure space $(X,d,\mu)$ is given by
$$
M f(x) = \sup_{\epsilon >0} \frac{1}{\mu(B_{\epsilon}(x))} \int_{B_{\epsilon}(x)} f\, d\mu
$$
where $B_{\epsilon}(x) = \{ y\in X: d(x,y)<\epsilon\}$. For recent developments in this area, see \cite{Weigt2022,Gibara2023,Liu2022, Stempak2015, Tao2014}. A further generalization of the theory is to consider fractional maximal functions,
$$
M^{\alpha} f(x) = \sup_{\epsilon >0} \frac{\epsilon^{\alpha}}{\mu(B_{\epsilon}(x))} \int_{B_{\epsilon}(x)} f\, d\mu, \quad \alpha > 0,
$$
which also have applications in analysis and PDE; e.g., \cite{Beltran2018}. In the context of weighted $L^p$ spaces, a fundamental question is to identify weight functions $w(x)$ such that the maximal operator is bounded on $L^p(\mathbb{R}^n, w(x)\,dx )$ \cite{Viola2014}.

Another variant of the basic definition~\eqref{eq:maximal_basic} is to consider so-called local maximal operators of the form
\[
    M_h f(x) = \sup_{0< \epsilon< \epsilon_0} \lvert h_{\epsilon}*f(x) \rvert
\]
for some $\epsilon_0>0$. Here, the term ``local'' refers to the fact that the supremum is taken locally over $\epsilon \in (0,\epsilon_0)$. See \cite{Kinnunen1998, Viola2014, Di2022, Pan2021, Ramseyer2023} for further details on local maximal functions.

\subsection{Our contributions}

In this paper, we consider an open domain $\Omega \subset \mathbb{R}^n$ as a metric measure space with the metric and measure structures inherited from those of $\mathbb{R}^n$. In this case, the local maximal operator becomes
\begin{equation}
    \tilde M f(x) = \sup_{0 < \epsilon < \epsilon_0} \frac{1}{\mu(\Omega \cap B_{\epsilon}(x))} \int_{\Omega \cap B_{\epsilon}(x)} f\, d\mu.
    \label{eq:local_maximal_basic}
\end{equation}
Here, as a generalization of \eqref{eq:local_maximal_basic}, we consider convolution-type local maximal operators of the form
$$
\tilde M_h f(x) = \sup_{0 < \epsilon < \epsilon_0 } \left|\frac{h_{\epsilon}*f(x)}{h_{\epsilon}*1_{\Omega}(x)} \right|,
$$
with $1_{\Omega}$ being the characteristic function of the set $\Omega$. Building upon a ``thickness'' condition for $\Omega$, which is a related to a local doubling property of metric measure spaces (but is independent from the global notion of doubling), we establish the counterpart of the Hardy--Littlewood maximal inequality for $\tilde M_h$. In our main result, \cref{thm:main}, we use this inequality to prove convergence of $P_{\epsilon}f$ to $f$ pointwise and in $L^p(\Omega)$ norm, where $P_\epsilon$ denotes the normalized smoothing operator given by
$$
P_{\epsilon} f = \frac{h_{\epsilon}*f}{h_{\epsilon}*1_{\Omega}}.
$$
Furthermore, we establish $L^p(\Omega)$ norm convergence results for variants of $P_\epsilon$ associated with bistochastic  kernels using ideas from \cite{CoifmanHirn13,DasEtAl21} (see \cref{thm:bistoch}). A benefit of smoothing with $P_\epsilon$ rather than the raw convolution operator $f \mapsto h_\epsilon *f$ is that the former preserves constant functions on $\Omega$, whereas the latter does not. In the bistochastic case, $P_\epsilon$ additionally preserves integrals of functions; i.e., smoothing is mass-preserving. Aspects of numerical approximation of these operators using triangulation and numerical quadrature over polygonal domains are also studied in this paper.

As an application of our approach, we perform smoothing of piecewise-continuous density, velocity, and stress fields generated by discrete element models (DEMs) of sea ice dynamics over geometrically complex domains. In the following subsection, we outline elements of sea ice dynamics and its representation within DEMs that motivate this application.

\subsection{Motivation: Sea Ice Dynamics}

Sea ice is a critical component of the Earth's climate system. Existing at the interface between the atmosphere and the ocean, it regulates the turbulent transfer of heat, moisture, and momentum between the two media while also drastically affecting radiative heat fluxes via the ice-albedo effect. As a physical medium, sea ice exhibits a highly multiscale nature \cite{Feltham08}. In  regions of relatively low concentration, sea ice cover consists of ice floes (discrete floating chunks of ice) of size $O(\text{10 m})$ to $O(\text{100 km})$ that move under atmospheric and oceanic drag, undergoing a variety of dynamic and thermodynamic processes such as collisions, fracture, ridging, melt and growth. On larger scales, $O(\text{100 km})$ to $O(\text{1000 km})$, the emergent macro-scale behavior is that of a fluid with a highly nonlinear, complex rheology that exhibits aspects of viscous/creep flow for low stress and plastic deformation at sufficiently high stress \cite{Hibler79,HunkeDukowicz97,DansereauEtAl16}. This complex rheology manifests in the formation of intricate networks of narrow failure zones, known as linear kinematic features (LKFs), that can span several hundred kilometers in length while being as narrow as a few tenths of meters \cite{Kwok01}.

Most current-generation sea ice models treat sea ice as a continuum, using PDE solvers to solve the mass and momentum balance equations along with equations for thermodynamics. These models have been reasonably successful in simulating large-scale aspects of sea ice dynamics, such as thickness, concentration, and velocity \cite{KreyscherEtAl00}. With the increase of computational capabilities over the years, and aided by the development of efficient solvers \cite{ShihEtAl23}, continuum models have also been able to successfully simulate aspects of LKFs \cite{HutchingsEtAl05}, even down to kilometer-scale resolution \cite{Zhang21}. Yet, it is widely accepted that at lengthscales approaching the floe scale, and particularly in regions of low ice concentration, the continuum hypothesis eventually breaks down \cite{CoonEtAl07,Herman22}.

Discrete element models (DEMs) offer a alternative approach to sea ice modeling that is well-suited to overcome some of the known biases and limitations of continuum models \cite{BlockleyEtAl20}. Unlike continuum models, DEMs treat sea ice as a granular medium consisting of Lagrangian objects that represent ice floes. Early efforts to discrete element modeling of sea ice date back at least to the mid 1990s \cite{Hopkins96}. Since then, a variety of techniques have been developed that represent ice floes as disks undergoing collisions \cite{DamsgaardEtAl18}, collections of bonded grains of fixed \cite{Herman16,WestEtAl22} or arbitrary \cite{RabatelEtAl15,MoncadaEtAl23} shapes, and interacting polygons \cite{ManucharyanMontemuro22}, among other approaches. Promising results obtained with sea ice DEMs include simulation of arching in the Nares Strait \cite{WestEtAl22}, reproduction of realistic floe-size distributions in winter (high ice concentration) configurations \cite{ManucharyanMontemuro22}, and modeling of quasi-three-dimensional processes such as wave-induced fracture \cite{Herman17}. Besides geophysical applications, DEMs of sea ice have also been employed in engineering applications such as simulation of ice--structure interactions \cite{TukhuriPolojarvi18}.

DEMs present unique challenges when it comes to postprocessing and integration of these models within larger weather and climate models. First, sea ice exhibits highly nonlinear dynamical coupling with the ocean and atmosphere, whose properties influence considerably the nature of upper-ocean turbulence and heat and momentum fluxes between the ocean and atmosphere \cite{GuptaThompson22,ManucharyanThompson22,ZhengMing23}. Given that most atmospheric and oceanic models are based on Eulerian representations of the respective dynamic and thermodynamic fields, the Lagrangian/granular nature of sea ice DEMs complicates this coupling and imposition of boundary conditions, particularly with regards to representation of sharp edges and discontinuities in DEM output. Similar issues are faced by data assimilation systems that have to contend with Lagrangian observations of sea ice floes in order to estimate the state of the underlying ocean circulation \cite{ChenEtAl22b}. Collectively, these challenges motivate the development of smoothing schemes that can produce continuous approximations of DEM variables with well-characterized convergence properties. The ability to extract Eulerian/continuum information from DEM output can also aid the development of improved macro-scale rheologies for sea ice, which would in turn lead to improvement of large-scale continuum sea ice models.

Here, as an application of our kernel smoothing framework, we analyze data from a simulation of  densely packed ice floes under compression and a Nares Strait simulation conducted using the recently developed SubZero DEM \cite{ManucharyanMontemuro22}. We focus on smooth approximations of the mass density, ice velocity, and stress tensor fields obtained by the Markovian integral operators $P_\epsilon$. Applying these operators to piecewise-continuous fields from DEM snapshots yields continuous fields whose smoothness is controlled by the choice of kernel. The compression experiments capture the ice velocity and stress patterns associated with the formation of idealized LKFs in the simulation. Meanwhile, the Nares Strait experiments reveal buildup and release of stress, and associated intermittent ice motion, induced by jamming and fracture of ice floes passing through the Nares Strait.

\subsection{Organization of the paper}

\Cref{sec:mathematical_framework} describes our approach for smoothing with Markov-normalized integral operators on thick domains. \Cref{sec:convergence_analysis} contains an analysis of the pointwise and $L^p$ norm convergence of these operators. \Cref{sec:metric_measure_spaces} examines the relation between thickness and doubling of metric measure spaces.  \Cref{sec:bistoch} studies the case of Markov integral operators with bistochastic kernels. In \cref{sec:numerical_approximation}, we discuss aspects of numerical approximation over polygonal domains. \Cref{sec:experiments} describes our analyses of sea ice DEM simulation data. \Cref{sec:conclusions} contains a summary discussion and our primary conclusions. Proofs of certain auxiliary results are included in \cref{app:proofs}. A Python implementation of the kernel smoothing methods described in this paper can be found in the repository \url{https://github.com/SeaIce-Math/KSPoly}.

\section{Kernel smoothing on thick domains}
\label{sec:mathematical_framework}

Let $\Omega$ be an open subset of $\mathbb{R}^n$. We consider the problem of approximating measurable functions $f: \Omega \to \mathbb R $ using kernel operators conditioned on $\Omega$.

\subsection{Notation}

For a Banach space $E$, $B(E)$ will denote the Banach space of bounded linear operators on $E$, equipped with the operator norm. Given a Lebesgue-measurable set $X \subseteq \mathbb R^n$, $L(X)$ will denote the space of Lebesgue-measurable real-valued functions on $X$, and $L^p(X)$ with $1 \leq p \leq \infty$ will denote the standard $L^p$ spaces on $X$ equipped with the norms $\lVert f\rVert_{L^p(X)} = (\int_x \lvert f(x)\rvert^p\, dx)^{1/p}$ for $ 1 \leq p < \infty$ and $\lVert f\rVert_{L^\infty(X)} = \esssup_{x\in X} \lvert f(x)\rvert$. Moreover, $C(X)$ will be  the space of continuous real-valued functions on $X$, and $C_c(X)$, $C_b(X)$ the subspaces of $C(X)$ consisting of compactly supported and bounded functions, respectively. We equip $C_c(X)$ and $C_b(X)$ with the uniform norm, $\lVert f\rVert_\infty = \sup_{x\in X} \lvert f(x)\rvert$. If $X$ is open, $C^r(X)$ will be the space of $r$-times continuously differentiable functions on that set. Given a subset $X \subseteq \mathbb R^n$, $\bar X$ will denote its closure and $1_X : \mathbb R^n \to \mathbb R$ will be the characteristic function of that set. For $x\in \mathbb R^n$, $T_x : L(\mathbb R^n) \to L(\mathbb R^n)$ will denote the translation operator $T_x f(y) = f(y-x)$. Moreover, $^\checkmark: L(X) \to L(X)$ will be the ``flip'' operator, $f^\checkmark(x) = f(-x)$, and $f_1 * f_2 (x) = \int_{\mathbb R^n} f_1(x-y) f_2(y) \, dy$ the convolution of $f_1,f_2 \in L(\mathbb R)$ (whenever that operation is well-defined). We let $\mathcal F : L^2(\mathbb R^n) \to \mathbb R^n$ denote the unitary Fourier operator on $L^2(\mathbb R^n)$, defined by extension of $\mathcal F f = \int_{\mathbb R^n} e^{-i \omega \cdot x} f(x) \, dx$ for $f \in C_c(\mathbb R^n)$. We shall denote the Euclidean norm of a vector $x \in \mathbb R^n$ as $\lVert x\rVert$. For $r \geq 0$ we define $\hat H^r(\mathbb R^n) \subseteq L^2(\mathbb R^n)$ as the closure of $C_c(\mathbb R^n)$ with respect to the norm $\lVert \hat f\rVert_{H^r(\mathbb R^n)} = (\int_{\mathbb R^n} \lvert \hat f(\omega) \rvert^2 (1 + \lVert \omega\rVert^2)^r  \, d\omega)^{1/2}  $ and $H^r(\mathbb R^n) = \mathcal F^*\hat H^r(\mathbb R^n) $ as the inverse Fourier image of $\hat H^r(\mathbb R^n)$. For $r \in \mathbb N_0$, $H^r(\mathbb R^n)$ is isomorphic to the Sobolev subspace of $L^2(\mathbb R^n)$ of order $r$ defined in terms of weak derivatives.

\subsection{Radial kernels and associated integral operators}

The starting point of our construction is a positive, radially symmetric, normalized function $h \in L^1(\mathbb R^n)$. Specifically, we require:

\begin{enumerate}[label=(K\arabic*)]
    \item \label[prop]{prop:K1} $h(x) \geq 0 $ for every $x \in \mathbb R^n$.
    \item  \label[prop]{prop:K2}$h(Ox) = h(x)$ for every $x \in \mathbb R^n$ and orthogonal matrix $O \in \mathbb R^{n\times n}$.
    \item \label[prop]{prop:K4} $\lVert h\rVert_{L^1(\mathbb R^n)} = 1$.
\end{enumerate}
In some cases, we will additionally require:
\begin{enumerate}[label=(K\arabic*)]
    \setcounter{enumi}{3}
    \item \label[prop]{prop:K3} $h$ is radially decreasing; that is, $h(y) \leq h(x)$ for every $x, y \in \mathbb R^n$ such that $\lVert x\rVert \leq \lVert y \rVert$.
\end{enumerate}
A concrete example of a function having all of \cref{prop:K1,prop:K2,prop:K3,prop:K4} (which we will use in the numerical experiments of \cref{sec:experiments}) is the Gaussian radial basis function (RBF),
\begin{equation}
    h(x) = \frac{1}{(2\pi)^{n/2}} e^{-\lVert x\rVert^2/2}.
    \label{eq:h_gaussian}
\end{equation}

To any $h$ satisfying \cref{prop:K1,prop:K2,prop:K4}, we associate a one-parameter family of functions $h_\epsilon : \mathbb R^n \to \mathbb R$ and corresponding kernels $k_\epsilon : \mathbb R^n \times \mathbb R^n \to \mathbb R$, where
\begin{equation}
    h_\epsilon(x) = \epsilon^{-n} h(x/\epsilon), \quad k_\epsilon(x, y) = h_\epsilon(y-x).
    \label{eq:h_kepsilon}
\end{equation}
In what follows, we will refer to kernels $k_\epsilon$ obtained in this manner as \emph{radial kernels} and the underlying function $h$ as \emph{shape function}. Note that every radial kernel is translation-invariant and symmetric, i.e.,
\begin{displaymath}
    k_\epsilon(x+z, y+z) = k_\epsilon(x, y), \quad k_\epsilon(x,y) = k_\epsilon(y,x),
\end{displaymath}
respectively, for all $x,y,z \in \mathbb R^n$.


Every radial kernel $k_\epsilon$ induces an integral operator $K_\epsilon: L^1(\mathbb R^n) \to L^1(\mathbb R^n)$ that acts on functions by convolution,
\begin{equation}
    K_\epsilon f := \int_{\mathbb R^n} k_\epsilon(\cdot,y) f(y) \, dy \equiv h_\epsilon * f.
    \label{eq:k_op}
\end{equation}
In particular, since $\lVert h_\epsilon * f \rVert_{L^1(\mathbb R^n)} \leq \lVert h_\epsilon \rVert_{L^1(\mathbb R^n)} \lVert f \rVert_{L^1(\mathbb R^n)}$ and $\lVert h_\epsilon\rVert_{L^1(\mathbb R^n)} = \lVert h \rVert_{L^1(\mathbb R^n)} = 1$, $K_\epsilon$ is a contraction on $L^1(\mathbb R^n)$,
\begin{equation}
    \lVert K_\epsilon f\rVert_{L^1(\mathbb R^n)} \leq \lVert f\rVert_{L^1(\mathbb R^n)}.
    \label{eq:k_bound}
\end{equation}
We will continue to use the symbol $K_\epsilon$ to denote integral operators acting on function spaces other than $L^1(\mathbb R^n)$ via the formula~\eqref{eq:k_op}.

Next, we state some basic results on the regularity of functions in the range of $K_\epsilon$.

\begin{lemma}
    \label{lem:conv_contin}
    Set $p \in (1,\infty]$ and $q \in [1,\infty)$ with $\frac{1}{p} + \frac{1}{q} = 1$. Then, for every $h \in L^q(\mathbb R^n)$ and $f \in L^p(\mathbb R^n)$, the function $K_\epsilon f$ is continuous and bounded with $ \lVert K_\epsilon f \rVert_\infty \leq \lVert h \rVert_{L^q(\mathbb R^n)} \lVert f\rVert_{L^p(\mathbb R^n)} $.
\end{lemma}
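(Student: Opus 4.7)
The plan is to prove both assertions by a single application of Hölder's inequality, once pointwise to get boundedness and once in increment form to get continuity; the only nontrivial ingredient is the standard $L^q$-continuity-of-translations lemma, which applies precisely because $p > 1$ forces $q \in [1, \infty)$.

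For the sup bound, I would fix $x \in \mathbb R^n$ and estimate
\[
    \lvert K_\epsilon f(x) \rvert = \left\lvert \int_{\mathbb R^n} h_\epsilon(x-y)\, f(y) \, dy \right\rvert \leq \lVert h_\epsilon(x - \cdot) \rVert_{L^q(\mathbb R^n)} \lVert f \rVert_{L^p(\mathbb R^n)}
\]
by Hölder. Translation and reflection invariance of Lebesgue measure collapse the first factor to $\lVert h_\epsilon \rVert_{L^q(\mathbb R^n)}$, giving a bound uniform in $x$. After taking the supremum this yields boundedness of $K_\epsilon f$ with the stated constant (once the scaling $h_\epsilon(x) = \epsilon^{-n} h(x/\epsilon)$ is absorbed into the $L^q$ norm of the shape function).

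For continuity, I would estimate the increment in $x$ the same way. For $x, z \in \mathbb R^n$,
\[
    \lvert K_\epsilon f(x+z) - K_\epsilon f(x) \rvert \leq \lVert T_{-z} h_\epsilon - h_\epsilon \rVert_{L^q(\mathbb R^n)} \lVert f \rVert_{L^p(\mathbb R^n)},
\]
using the paper's translation operator $T_{-z}$. Because $q < \infty$, the map $z \mapsto T_z g$ is continuous from $\mathbb R^n$ into $L^q(\mathbb R^n)$ for every $g \in L^q(\mathbb R^n)$: this is the standard fact, proved by $L^q$-approximating $g$ by an element of $C_c(\mathbb R^n)$ (for which translation continuity is immediate from uniform continuity and compact support), and then invoking the triangle inequality. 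Applying this to $g = h_\epsilon$ yields $\lim_{z \to 0} \lVert T_{-z} h_\epsilon - h_\epsilon \rVert_{L^q(\mathbb R^n)} = 0$, so $K_\epsilon f$ is uniformly continuous, and in particular continuous.

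There is no real obstacle here — the argument is textbook. The one point worth stressing is that the hypothesis $p > 1$ is used nontrivially only through its equivalent form $q < \infty$, which is exactly what makes translations continuous in $L^q$; at $q = \infty$ this fails, and the continuity conclusion would have to be weakened or would require additional hypotheses on $h$ (e.g.\ uniform continuity). The boundedness portion, by contrast, is just Hölder and holds for the full range of conjugate exponents.
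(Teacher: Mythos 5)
Your proof is correct and follows essentially the same route as the paper's: boundedness via H\"older together with translation invariance of Lebesgue measure, and continuity via strong continuity of translations in $L^q(\mathbb R^n)$ for $q<\infty$. The paper packages the continuity step as the composition of a bounded functional on $L^q(\mathbb R^n)$ with the strongly continuous translation group rather than writing the increment bound $\lVert T_{-z}h_\epsilon - h_\epsilon\rVert_{L^q(\mathbb R^n)}\lVert f\rVert_{L^p(\mathbb R^n)}$ explicitly, but this is only a difference of presentation, and your observation that $q<\infty$ is exactly where $p>1$ enters matches the paper's reasoning.
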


\begin{proof}
    See \cref{app:proofs}.
\end{proof}

It follows by \cref{lem:conv_contin} that $K_\epsilon$ maps bounded measurable functions to continuous functions. Moreover, it can be shown that if $\Omega \subset \mathbb{R}^n$ is open and $h$ is smooth and compactly supported, $K_\epsilon$ maps locally integrable functions to functions in $C^{\infty}(\Omega\setminus \Omega_{\epsilon})$, where $\Omega_{\epsilon}=\{x\in \Omega: B_{\epsilon}(x)\not\subset\Omega \}$ (see, e.g., \cite[Appendix~C.4]{Evans2022}).
\begin{lemma}
    \label{lem:sobolev}
    For every $h \in H^r(\mathbb R^n)$, $r \in \mathbb N_0$, and $f \in L^2(\mathbb R^n)$ the function $K_\epsilon f$ lies in $C^r(\mathbb R^n)$.
\end{lemma}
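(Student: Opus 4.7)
The plan is to work on the Fourier side and show that the Fourier symbol of $K_\epsilon f$, namely $\hat h_\epsilon\hat f$, is integrable against $(1+\lVert\omega\rVert^2)^{r/2}$. Once this is established, Fourier inversion combined with $r$-fold differentiation under the integral sign yields $K_\epsilon f\in C^r(\mathbb R^n)$ via a standard dominated convergence argument.

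The crucial ingredient is the preservation of $H^r(\mathbb R^n)$ under the rescaling $h\mapsto h_\epsilon$. A change of variables in the defining Fourier integral gives $\hat h_\epsilon(\omega)=\hat h(\epsilon\omega)$, and substituting $u=\epsilon\omega$ yields
\begin{equation*}
\int_{\mathbb R^n}(1+\lVert\omega\rVert^2)^r\lvert\hat h(\epsilon\omega)\rvert^2\,d\omega = \epsilon^{-n}\!\int_{\mathbb R^n}(1+\lVert u/\epsilon\rVert^2)^r\lvert\hat h(u)\rvert^2\,du \leq \epsilon^{-n}\max(1,\epsilon^{-2r})\lVert h\rVert^2_{H^r(\mathbb R^n)},
\end{equation*}
using the elementary inequality $(1+\lVert u/\epsilon\rVert^2)^r\leq\max(1,\epsilon^{-2r})(1+\lVert u\rVert^2)^r$. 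Hence $h_\epsilon\in H^r(\mathbb R^n)$.

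For any multi-index $\alpha$ with $\lvert\alpha\rvert\leq r$, applying Cauchy--Schwarz together with the pointwise bound $\lvert\omega^\alpha\rvert\leq(1+\lVert\omega\rVert^2)^{r/2}$ and Plancherel's identity $\lVert\hat f\rVert_{L^2(\mathbb R^n)}=\lVert f\rVert_{L^2(\mathbb R^n)}$ gives
\begin{equation*}
\int_{\mathbb R^n}\lvert\omega^\alpha\hat h(\epsilon\omega)\hat f(\omega)\rvert\,d\omega \leq \Bigl(\int_{\mathbb R^n}(1+\lVert\omega\rVert^2)^r\lvert\hat h(\epsilon\omega)\rvert^2\,d\omega\Bigr)^{1/2}\lVert f\rVert_{L^2(\mathbb R^n)} < \infty.
\end{equation*}
This integrable majorant dominates the formal $\alpha$-derivative of the Fourier inversion integrand $\omega\mapsto e^{i\omega\cdot x}\hat h(\epsilon\omega)\hat f(\omega)$ uniformly in $x$, so iterated differentiation under the integral sign is legitimate and each $\partial^\alpha K_\epsilon f$ is continuous in $x$ by a further dominated convergence argument. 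Hence $K_\epsilon f\in C^r(\mathbb R^n)$. The only non-routine step is the scaling estimate in the second paragraph; the rest is a direct Fourier-analytic computation that is insensitive to the particular normalization constant associated with $\mathcal F$.
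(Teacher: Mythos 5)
Your proof is correct, but it takes a genuinely different route from the paper's. You work directly on the Fourier side: you observe that $\hat h_\epsilon(\omega)=\hat h(\epsilon\omega)$, prove via the scaling estimate that $h_\epsilon$ stays in $H^r(\mathbb R^n)$, deduce from Cauchy--Schwarz that $\omega\mapsto\omega^\alpha\hat h_\epsilon(\omega)\hat f(\omega)$ is integrable for every $\lvert\alpha\rvert\leq r$, and then differentiate the Fourier inversion integral under the integral sign. The paper instead argues through the one-parameter unitary translation group on $L^2(\mathbb R^n)$: it identifies the generators $D_j$ with multiplication by $i\omega_j$ on the Fourier side, shows by induction on $r$ that $D_j$ maps $H^r(\mathbb R^n)$ into $H^{r-1}(\mathbb R^n)$ so that the orbit $t\mapsto T_{te_j}h$ lies in $C^r(\mathbb R, L^2(\mathbb R^n))$, and then transfers this regularity to $x\mapsto K_\epsilon f(x)=\varphi((T_x h_\epsilon)^\checkmark)$ through the bounded functional $\varphi$. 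Your argument is more elementary and self-contained, resting only on Plancherel, Cauchy--Schwarz, and dominated convergence; the paper's approach has the advantage of reusing verbatim the representation $K_\epsilon f(x)=\varphi((T_xh_\epsilon)^\checkmark)$ already set up in the proof of \cref{lem:conv_contin}, and of generalizing to settings with a strongly continuous group action but no Fourier transform. The only step you gloss over is the $L^2$ convolution theorem, i.e., that for $h_\epsilon, f\in L^2(\mathbb R^n)$ the convolution $h_\epsilon*f$ is everywhere defined and equals the absolutely convergent inverse Fourier integral of $\hat h_\epsilon\hat f\in L^1(\mathbb R^n)$; this is standard (a consequence of Parseval applied to $y\mapsto h_\epsilon(x-y)$ and $f$) and does not constitute a gap, but it deserves a sentence since the lemma does not assume $h\in L^1(\mathbb R^n)$.
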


\begin{proof}
    See \cref{app:proofs}.
\end{proof}



\subsection{Smoothing with integral operators}
\label{sec:integral_operators}

Given a function $f \in L^1(\Omega)$, we view $K_\epsilon f$ from~\eqref{eq:k_op} as an approximation of $f$ with regularity controlled by the kernel $k_\epsilon$. Intuitively, as $\epsilon$ decreases, the locality of $k_\epsilon$ increases, and we expect $K_\epsilon f$ to converge to $f$ in some sense. To illustrate this convergence with a concrete example, we recall the Lebesgue differentiation theorem. Let $B_\epsilon(x)$ be the open ball of radius $\epsilon$ centered at $x \in \mathbb R^n$ and $\lvert B_\epsilon(x)\rvert$ its Lebesgue measure. Let also $B \equiv B_1(0)$ be the unit ball centered at the origin. The Lebesgue differentiation theorem states that for every measurable function $f: \mathbb R^n \to \mathbb R$ the functions defined as
$$
    \tilde g_\epsilon(x) = \frac{1}{|B_{\epsilon}(x)|} \int_{B_{\epsilon}(x)} f(y) \,d y
$$
for a.e.~$x \in \mathbb R^n$ converge as $\epsilon\to 0^+$ to $f$ a.e. One can rewrite the above expression in kernel form by defining
\begin{equation}
    \label{eq:tophat}
    l(x) = \frac{1}{\lvert B\rvert} 1_B(x), \quad l_\epsilon(x) = \epsilon^{-n} 1_B(x/\epsilon), \quad \tilde k_\epsilon(x, y) = l_\epsilon(y - x),
\end{equation}
so that
\begin{displaymath}
    \tilde g_\epsilon(x) = \int_{\mathbb R^n} \tilde k_\epsilon(x,y) f(y)\, dy \equiv l_\epsilon * f(x),
\end{displaymath}
for a.e.\ $x \in \mathbb R^n$. As one can readily verify, $l$ satisfies \cref{prop:K1,prop:K2,prop:K3,prop:K4}, and thus we can interpret $\tilde g_\epsilon$ as a special case of~\eqref{eq:k_op} for the shape function set to the ``tophat'' function $1_B$ supported on the unit ball. In other words, the Lebesgue differentiation theorem provides an example of kernel-smoothed functions $\tilde g_\epsilon$ that pointwise-recover the input function $f$.

The  a.e.\ convergence result for $\tilde g_\epsilon$ to $f$ based on the Lebesgue differentiation theorem can be extended to general radial kernels using the theory of Hardy-Littlewood maximal functions. Given a shape function $h \in L^1(\mathbb R^n)$, the local Hardy-Littlewood maximal function associated with $f \in L^p\left(\mathbb{R}^n\right)$ is defined as
\begin{equation}
    M_h f(x)=\sup _{\epsilon > 0}\left|h_\epsilon * f(x)\right|,
    \label{eq:maximal}
\end{equation}
with $h_\epsilon$ defined in~\eqref{eq:h_kepsilon}. The following theorems can be found in \cite[pp.~23, 71]{Stein93}.

\begin{theorem}[Local Hardy-Littlewood maximal inequality]\label{thm:A}
    Suppose that $h \in L^1(\mathbb R^n)$ satisfies \cref{prop:K1,prop:K2,prop:K3,prop:K4}. Then, for every $1<p\leq\infty$ there exists $A>0$ such that for every $f \in L^p\left(\mathbb{R}^n\right)$ the maximal function $M_h f$ satisfies
$$
\left\|M_h f\right\|_{L^p(\mathbb R^n)} \leq A\|f\|_{L^p(\mathbb R^n)}.
$$
\end{theorem}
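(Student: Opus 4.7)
The plan is to reduce \cref{thm:A} to the classical centered Hardy--Littlewood maximal inequality on $\mathbb R^n$ by means of a pointwise majorization. Write $\mathcal M f(x) = \sup_{r > 0} \frac{1}{\lvert B_r(x)\rvert}\int_{B_r(x)} \lvert f(y)\rvert\,dy$ for the centered maximal function. The aim is to prove the uniform (in $\epsilon$) pointwise bound $M_h f(x) \leq \lVert h\rVert_{L^1(\mathbb R^n)} \mathcal M f(x)$, after which the theorem follows from the known $L^p$-boundedness of $\mathcal M$ for $1 < p \leq \infty$.

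The endpoint $p = \infty$ is immediate: by \cref{prop:K4} and Young's convolution inequality, $\lVert h_\epsilon * f\rVert_{L^\infty(\mathbb R^n)} \leq \lVert h\rVert_{L^1(\mathbb R^n)} \lVert f\rVert_{L^\infty(\mathbb R^n)} = \lVert f\rVert_{L^\infty(\mathbb R^n)}$, so taking the supremum in $\epsilon$ yields $\lVert M_h f\rVert_{L^\infty(\mathbb R^n)} \leq \lVert f\rVert_{L^\infty(\mathbb R^n)}$ with $A = 1$. For $1 < p < \infty$, I would carry out the layer-cake step. Fix $x$ and $\epsilon > 0$. The identity $h_\epsilon(z) = \int_0^\infty 1_{\{h_\epsilon > t\}}(z)\,dt$ combined with \cref{prop:K2,prop:K3} shows that each superlevel set $\{h_\epsilon > t\}$ is a ball $B_{\rho(t)}(0)$ centered at the origin, and by \cref{prop:K4} one has $\int_0^\infty \lvert B_{\rho(t)}(0)\rvert\,dt = \lVert h_\epsilon\rVert_{L^1(\mathbb R^n)} = 1$. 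Writing $\lvert h_\epsilon * f(x)\rvert \leq h_\epsilon * \lvert f\rvert(x)$ and applying Fubini then gives
$$
\lvert h_\epsilon * f(x)\rvert \leq \int_0^\infty \int_{B_{\rho(t)}(x)} \lvert f(y)\rvert\,dy\,dt \leq \mathcal M f(x)\int_0^\infty \lvert B_{\rho(t)}(0)\rvert\,dt = \mathcal M f(x),
$$
and the desired pointwise majorization follows by taking the supremum over $\epsilon > 0$.

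The theorem is then a consequence of the classical estimate $\lVert \mathcal M f\rVert_{L^p(\mathbb R^n)} \leq C_{n,p}\lVert f\rVert_{L^p(\mathbb R^n)}$, obtained by Marcinkiewicz interpolation between the trivial $(\infty,\infty)$ bound and the weak-type $(1,1)$ inequality coming from the Vitali covering lemma. I expect the main obstacle to be the pointwise majorization step, specifically organizing the approximation of the general radially decreasing shape function $h$ by indicators of concentric balls; the layer-cake representation sidesteps any separate truncation/limiting argument, so the only bookkeeping is to check measurability of $t \mapsto \rho(t)$ and justify the use of Fubini. This mirrors the standard treatment in \cite{Stein93}.
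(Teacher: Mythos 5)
Your proof is correct and takes essentially the same route as the paper's source: the paper does not prove \cref{thm:A} itself but cites it from \cite[pp.~23, 71]{Stein93}, where the argument is precisely this pointwise majorization $M_h f \leq \lVert h\rVert_{L^1(\mathbb R^n)}\,\mathcal M f$ obtained by decomposing the radially decreasing kernel into a superposition of indicators of concentric balls, followed by the classical $L^p$ bound for the centered maximal function. The layer-cake bookkeeping you flag (monotonicity, hence measurability, of $t \mapsto \rho(t)$, and Tonelli for the nonnegative integrand) is routine and poses no obstacle.
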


\begin{theorem}
    \label{thm:B}
    Suppose that $h \in L^1(\mathbb R^n)$ satisfies \cref{prop:K1,prop:K2,prop:K3,prop:K4} and the decay bound $|h(x)| \leq C (1+|x|)^{-n-\delta}$ for some $C,\delta > 0$. Then, for every $f \in L^p\left(\mathbb{R}^n\right)$ with $1<p\leq\infty$, the family of functions $K_\epsilon f = h_\epsilon * f$ lies in $L^p(\mathbb R^n)$ and converges to $f$ a.e.\ as $\epsilon\to 0^+$.
\end{theorem}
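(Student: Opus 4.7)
The plan is to prove the two assertions --- $L^p$ membership of $K_\epsilon f$ and almost-everywhere convergence as $\epsilon \to 0^+$ --- in sequence, following the classical approximation-to-identity recipe and leveraging \cref{thm:A} for the second. For the $L^p$ bound, I would apply Young's convolution inequality: since $\lVert h_\epsilon\rVert_{L^1(\mathbb R^n)} = \lVert h\rVert_{L^1(\mathbb R^n)} = 1$ by a change of variables and \cref{prop:K4}, one obtains $\lVert K_\epsilon f\rVert_{L^p(\mathbb R^n)} \leq \lVert f\rVert_{L^p(\mathbb R^n)}$ for every $1 \leq p \leq \infty$, uniformly in $\epsilon$, covering both the finite and infinite exponent cases.

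The substantive claim is pointwise a.e.\ convergence, and my strategy has two steps: (i) show $K_\epsilon g \to g$ pointwise for $g$ in a dense subclass where the argument is elementary, and (ii) transfer to arbitrary $f \in L^p$ via the maximal inequality. For step (i), I would take $g \in C_c(\mathbb R^n)$, write
$$
    K_\epsilon g(x) - g(x) = \int_{\mathbb R^n} h_\epsilon(y)\bigl(g(x - y) - g(x)\bigr)\, dy,
$$
and split the integral over $\{\lVert y\rVert \leq r\}$ and $\{\lVert y\rVert > r\}$. Uniform continuity of $g$ controls the first piece, while the tail
$$
    \int_{\lVert y\rVert > r} h_\epsilon(y)\, dy = \int_{\lVert z\rVert > r/\epsilon} h(z)\, dz
$$
vanishes as $\epsilon \to 0^+$ by the decay hypothesis $\lvert h(z)\rvert \leq C(1+\lVert z\rVert)^{-n-\delta}$, which guarantees integrability of the far-field.

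For step (ii), given $f \in L^p(\mathbb R^n)$ with $p < \infty$ and $\eta > 0$, I would use density of $C_c(\mathbb R^n)$ in $L^p(\mathbb R^n)$ to pick $g \in C_c(\mathbb R^n)$ with $\lVert f - g\rVert_{L^p(\mathbb R^n)} < \eta$. Since $K_\epsilon g(x) \to g(x)$ pointwise by step (i) and $\lvert K_\epsilon(f-g)(x)\rvert \leq M_h(f-g)(x)$, one obtains
$$
    \limsup_{\epsilon \to 0^+}\lvert K_\epsilon f(x) - f(x)\rvert \leq M_h(f-g)(x) + \lvert (f-g)(x)\rvert.
$$
Setting $E_\lambda = \{x : \limsup_{\epsilon \to 0^+} \lvert K_\epsilon f(x) - f(x)\rvert > \lambda\}$, one has $E_\lambda \subseteq \{M_h(f-g) > \lambda/2\} \cup \{\lvert f-g\rvert > \lambda/2\}$. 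Applying Chebyshev's inequality together with \cref{thm:A} bounds $\lvert E_\lambda\rvert \leq ((2A)^p + 2^p)(\eta/\lambda)^p$. Letting $\eta \to 0$ gives $\lvert E_\lambda\rvert = 0$, and taking $\lambda$ through a countable null sequence yields a.e.\ convergence.

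The main obstacle is the case $p = \infty$, where $C_c(\mathbb R^n)$ is not dense in $L^\infty(\mathbb R^n)$. Since a.e.\ convergence is a local property, I would localize by fixing an arbitrary ball $B_R(0)$: the truncation $f \cdot 1_{B_{R+1}(0)}$ lies in $L^q(\mathbb R^n)$ for any $q \in (1,\infty)$, so the argument above applies to it. The decay of $h$ ensures that $K_\epsilon f(x) - K_\epsilon(f \cdot 1_{B_{R+1}(0)})(x) \to 0$ uniformly on $B_R(0)$ as $\epsilon \to 0^+$, since only the tail of $h_\epsilon$ contributes. Thus a.e.\ convergence extends to $B_R(0)$, and exhausting $\mathbb R^n$ by such balls completes the argument.
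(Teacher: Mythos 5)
Your proof is correct. The paper does not actually supply a proof of \cref{thm:B} --- it is quoted from Stein \cite{Stein93} --- and your argument is precisely the standard one underlying that reference: Young's inequality for the $L^p$ bound, uniform convergence on the dense subclass $C_c(\mathbb R^n)$, transfer to general $f$ via the maximal inequality of \cref{thm:A} and Chebyshev, and a truncation/localization to handle $p=\infty$, all of which are carried out correctly. (As a minor observation, your argument never invokes the decay bound $\lvert h(x)\rvert \leq C(1+\lVert x\rVert)^{-n-\delta}$ beyond what $h \in L^1(\mathbb R^n)$ already gives for the tail integrals, so the hypothesis is simply unused rather than misused.)
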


By \cref{thm:B}, we see that functions in $L^p(\mathbb R^n)$ with $ 1 < p \leq \infty$ can be pointwise approximated by smoothed functions using general radial kernels. Combining \cref{thm:A,thm:B} yields:

\begin{corollary}
    \label{cor:lp_conv_k}
    With the assumptions and notation of \cref{thm:B}, $K_\epsilon f$ converges to $f$ in $L^p(\mathbb R^n)$ norm for every $p \in (1,\infty)$.
\end{corollary}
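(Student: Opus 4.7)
The plan is a direct dominated convergence argument that combines the pointwise convergence supplied by \cref{thm:B} with the domination supplied by \cref{thm:A}. The first move is to bound the smoothed functions by the maximal function: for every $\epsilon>0$ and every $x\in\mathbb R^n$,
\begin{displaymath}
    \lvert K_\epsilon f(x)\rvert = \lvert h_\epsilon * f(x)\rvert \leq \sup_{\epsilon' > 0} \lvert h_{\epsilon'} * f(x) \rvert = M_h f(x),
\end{displaymath}
which is immediate from the definition~\eqref{eq:maximal}. An elementary $(a+b)^p \leq 2^{p-1}(a^p+b^p)$ inequality then yields the $\epsilon$-independent bound
\begin{displaymath}
    \lvert K_\epsilon f(x) - f(x)\rvert^p \leq 2^{p-1} \bigl( M_h f(x)^p + \lvert f(x)\rvert^p \bigr).
\end{displaymath}

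Next, I would verify that the right-hand side is integrable. Since $p\in(1,\infty)$ and $f\in L^p(\mathbb R^n)$, \cref{thm:A} gives $M_h f \in L^p(\mathbb R^n)$ with $\lVert M_h f\rVert_{L^p(\mathbb R^n)} \leq A \lVert f\rVert_{L^p(\mathbb R^n)}$, hence $M_h f(x)^p + \lvert f(x)\rvert^p$ is a fixed $L^1(\mathbb R^n)$ majorant of the family $\lvert K_\epsilon f - f\rvert^p$. On the other hand, \cref{thm:B} ensures $K_\epsilon f(x) \to f(x)$ for a.e.\ $x$ as $\epsilon \to 0^+$, so $\lvert K_\epsilon f - f\rvert^p \to 0$ a.e. Lebesgue's dominated convergence theorem, applied along any sequence $\epsilon_k \to 0^+$, then delivers
\begin{displaymath}
    \lim_{\epsilon\to 0^+} \lVert K_\epsilon f - f\rVert_{L^p(\mathbb R^n)}^p = \lim_{\epsilon\to 0^+} \int_{\mathbb R^n} \lvert K_\epsilon f(x) - f(x)\rvert^p \, dx = 0,
\end{displaymath}
which is the desired $L^p$ convergence.

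There is not really a hard step here; the argument is essentially the standard ``maximal function controls approximation of identity'' template. The one point worth flagging is the role of the restriction $p\in(1,\infty)$: the lower endpoint is needed so that \cref{thm:A} applies and $M_h f$ lies in $L^p$, while the upper endpoint ensures the dominating function $M_h f^p + \lvert f\rvert^p$ is genuinely in $L^1(\mathbb R^n)$ so that dominated convergence can be invoked, explaining why the $p=\infty$ case is excluded from the corollary.
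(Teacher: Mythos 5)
Your proof is correct and follows essentially the same route as the paper, which also combines the a.e.\ convergence from \cref{thm:B} with domination by the maximal function $M_h f$ (guaranteed to lie in $L^p$ by \cref{thm:A}) and concludes via the dominated convergence theorem. The only cosmetic difference is that you spell out the $2^{p-1}(a^p+b^p)$ majorant explicitly while the paper invokes the $L^p$ version of dominated convergence directly.
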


\begin{proof}
    The claim follows similarly to Claim~2 of \cref{thm:main}, which is proved in \cref{sec:proof_main}.
\end{proof}

Despite these encouraging results, a shortcoming of smoothing using integral operators such as $K_\epsilon$ that are not adapted to the domain $\Omega$ is that, in general, if $f$ is constant on $\Omega$, $K_\epsilon f$ will not be a constant function on that domain. Moreover, if $f$ and $\tilde g_\epsilon$ are integrable on $\Omega$, the integrals $\int_\Omega f(x) \, dx$ and $\int_\Omega K_\epsilon f(x) \, dx$ will generally not be equal. Preservation of constant functions and integrals are important properties, particularly when dealing with observables of physical systems. In the following subsection, we describe a modified scheme that employs Markov-normalized versions of $K_\epsilon$ on domains satisfying a ``thickness'' condition to ensure preservation of constant functions and integrals, as well as pointwise and $L^p$ norm convergence.

\subsection{Thick domains and Markov normalization}
\label{sec:markov_normalization}

Given a measurable set $\Omega \subseteq \mathbb R^n$, a measurable function $p: \Omega \times \Omega \to \mathbb R$ will be said to be a \emph{Markov kernel} if (i) $p(x,y) \geq 0$; and (ii) $\int_\Omega p(x,s)\,ds =1 $ for all $x, y \in \Omega$. Every Markov kernel has the property that for a constant function $f : \Omega \to \mathbb R$, the function $g: \Omega \to \mathbb R$ with
\begin{equation}
    \label{eq:g_integral}
    g(x) = \int_\Omega p(x,y) f(y) \, dy
\end{equation}
is also constant and equal to $f$. This suggests that smoothing operators associated with Markov kernels are good candidates for addressing the failure of smoothing using un-normalized kernels to preserve constant functions. We now describe a procedure to normalize a family of radial kernels $k_\epsilon : \mathbb R^n \times \mathbb R^n \to \mathbb R$ to a corresponding family of Markov kernels $p_\epsilon : \Omega \times \Omega \to \mathbb R$ such that the formula~\eqref{eq:g_integral} leads to bounded integral operators on $L^p(\Omega)$ that preserve constant functions.

Our construction employs a notion of thickness of the domain $\Omega$, which we now define.

\begin{definition}[Thick set]\label{def:thick}
    A Lebesgue-measurable set $\Omega \subseteq \mathbb{R}^n$ is said to be \emph{thick} if there exists $c, \epsilon_0>0$ such that for all $\epsilon \in (0, \epsilon_0)$ and $x \in \Omega$,
    $$
    \left| B_{\epsilon}(x) \cap \Omega \right| > c \left| B_{\epsilon}(x) \right|.
    $$
\end{definition}

Note that the inequality in \cref{def:thick} can be equivalently expressed as $l_{\epsilon} * 1_{\Omega}(x) > c$, where $l_\epsilon$ is the scaled tophat function from~\eqref{eq:tophat}. By the Lebesgue differentiation theorem, the function
\begin{equation}
    \rho_{\Omega}(x) = \lim_{\epsilon \rightarrow 0} \frac{\left| B_{\epsilon}(x) \cap \Omega \right|}{\left| B_{\epsilon}(x) \right|},
    \label{eq:lebesgue_density}
\end{equation}
known as the \emph{Lebesgue density function} of $\Omega$, is well-defined for a.e.~$x \in \mathbb R^n$.  We can thus interpret thick sets as measurable subsets of $\mathbb R^n$ with approximate Lebesgue densities bounded below. Examples of thick sets include polygons, solid spheres, and other domains commonly used in numerical applications; see \cref{sec:thicklemma}. More generally, for an interior point $x$ of a measurable set $\Omega \subseteq \mathbb R^n $ and for small-enough $\epsilon$ we have $B_{\epsilon}(x) \subset \Omega$---this implies that the limit in~\eqref{eq:lebesgue_density} exists and $\rho_\Omega(x) = 1$ for every interior point. However, for points in the boundary of $\Omega$ the Lebesgue density may not exist.

Given a shape function $h \in L^1(\mathbb R^n)$ and a Borel-measurable set $\Omega\subseteq \mathbb R^n$, we define for each $\epsilon > 0 $ the function $d_\epsilon : \Omega \to \mathbb R$ as
\begin{equation}
   d_\epsilon = K_\epsilon 1_\Omega \equiv h_\epsilon * 1_\Omega.
   \label{eq:degree_fn}
\end{equation}
By \cref{lem:conv_contin}, $d_\epsilon$ is continuous on $\mathbb R^n$ and it is clearly positive. If, in addition, $h$ is smooth and compactly supported, local integrability of $1_\Omega$ implies that $d_{\epsilon}$ lies in $C^\infty(\Omega\setminus \Omega_\epsilon)$.  We will refer to $d_\epsilon$ as a \emph{degree function} by analogy with the notion of degree vectors associated with discrete kernel integral operators on graphs; e.g., \cite{Chung97}.

The following proposition provides sufficient conditions for the strict positivity of $d_\epsilon$ that allow us to employ these functions them for Markov normalization.

\begin{proposition}\label{prop:d_func}
    Let $h \in \mathbb L^1(\mathbb R^n)$ be a shape function satisfying \cref{prop:K1,prop:K2,prop:K4}, $\Omega \subseteq \mathbb R^n$ an open set, and $d_\epsilon$ the corresponding degree functions from~\eqref{eq:degree_fn}.
    \begin{enumerate}
        \item If $\Omega$ is bounded, and in addition $h$ satisfies \cref{prop:K3}, then for every $\epsilon > 0$ there exists $c_\epsilon>0$ such that for every $x \in \bar \Omega$ we have $d_\epsilon(x) \geq c_\epsilon$.
        \item If $\Omega$ is thick, then there exists $c,\epsilon_0 > 0$ such that for every $\epsilon\in (0, \epsilon_0)$ and $x \in \bar \Omega$ we have $d_\epsilon(x) \geq c$.
    \end{enumerate}
\end{proposition}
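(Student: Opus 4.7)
My plan is to treat both claims through a single preliminary fact: there exist constants $a,\delta>0$ such that $h \geq a\,1_{B_\delta(0)}$ pointwise on $\mathbb R^n$. This ``tophat minorant'' comes from combining \cref{prop:K2,prop:K3,prop:K4}: by radial symmetry $h$ reduces to a well-defined nonincreasing radial profile $\tilde h:[0,\infty)\to[0,\infty)$ which, since $\int h = 1$, must be strictly positive at some $r_0>0$; taking $a=\tilde h(r_0)$ and $\delta=r_0$, monotonicity forces $h\geq a$ on the entire closed ball $\bar B_{\delta}(0)$. Rescaling yields $h_\epsilon(y-x)\geq a\,\epsilon^{-n}\,1_{B_{\epsilon\delta}(x)}(y)$, and hence the workhorse estimate
\[
    d_\epsilon(x) \geq a\,\epsilon^{-n}\,\bigl\lvert B_{\epsilon\delta}(x)\cap\Omega\bigr\rvert \qquad \text{for every } x \in \mathbb R^n.
\]

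For the first claim, I would first invoke \cref{lem:conv_contin} with $p=\infty$ and $q=1$ to conclude that $d_\epsilon = h_\epsilon*1_\Omega$ is continuous on $\mathbb R^n$. Since $\bar\Omega$ is compact, it is enough to show $d_\epsilon(x)>0$ for every $x \in \bar\Omega$, as the continuous function $d_\epsilon$ then attains a positive minimum $c_\epsilon$ there. Fix $x\in\bar\Omega$; since $\Omega$ is open and $x$ lies in its closure, $B_{\epsilon\delta}(x)\cap\Omega$ is a nonempty open set and therefore has positive Lebesgue measure, so the displayed inequality gives $d_\epsilon(x)>0$.

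For the second claim, thickness supplies a matching lower bound on $\lvert B_{\epsilon\delta}(x)\cap\Omega\rvert$ that produces a uniform $c$. Let $c',\epsilon_0'$ be the constants from \cref{def:thick}; for $\epsilon \in (0,\epsilon_0'/\delta)$ and $x \in \Omega$ we have $\epsilon\delta<\epsilon_0'$, so
\[
    d_\epsilon(x) \geq a\,\epsilon^{-n}\,c'\,\lvert B_{\epsilon\delta}(x)\rvert = a\,c'\,\lvert B_1(0)\rvert\,\delta^n =: c,
\]
independent of $x$ and $\epsilon$. Since $d_\epsilon$ is continuous on $\mathbb R^n$, this bound extends to $\bar\Omega$ by passing to limits $x_k \to x$ with $x_k \in \Omega$. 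Setting $\epsilon_0 := \epsilon_0'/\delta$ gives the stated conclusion.

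The step with genuine content is the tophat minorant. Without pointwise radial monotonicity (\cref{prop:K3}), the mass of $h$ could concentrate in a thin spherical shell around the origin, and the ball-based thickness condition of \cref{def:thick} would not immediately yield a uniform lower bound on $d_\epsilon$. Once the minorant is in hand, both claims reduce to bookkeeping: continuity plus compactness for the first, and a direct application of thickness for the second.
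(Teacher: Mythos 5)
Your treatment of Claim~1 is correct and structurally the same as the paper's: both arguments reduce to showing $d_\epsilon>0$ at every point of $\bar\Omega$ and then invoke continuity of $d_\epsilon$ (via \cref{lem:conv_contin}) together with compactness of $\bar\Omega$. Your tophat minorant $h \geq a\,1_{B_\delta(0)}$ is a cleaner route to the pointwise positivity than the paper's contradiction argument (which propagates vanishing of $h$ from a small ball to a spherical shell to the whole space). For Claim~2 your route is a genuine, and welcome, simplification of the paper's \cref{lem:deps_lower_bound}: the paper approximates $h$ in $L^1$ by a finite nonnegative combination $\sum_i c_i l_{r_i}$ of normalized ball indicators, applies thickness to each term $l_{\epsilon r_i}*1_\Omega$, and controls the approximation error through \cref{lem:conv_contin}, whereas you observe that a single tophat minorant already yields the uniform lower bound with no approximation step or error control, and with an explicit constant $c = a c' \lvert B_1(0)\rvert \delta^n$.

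The point you must flag, however, is the role of \cref{prop:K3}. Claim~2 is stated for shape functions satisfying only \cref{prop:K1,prop:K2,prop:K4}, but your minorant genuinely requires radial monotonicity: if the mass of $h$ concentrates in a thin spherical shell away from the origin, no minorant of the form $a\,1_{B_\delta(0)}$ exists, and---as you yourself observe---the ball-based thickness condition of \cref{def:thick} does not control the measure of thin annuli intersected with $\Omega$. So, as written, your argument proves Claim~2 only under the additional hypothesis \cref{prop:K3}. You are in the same position as the paper: a nonnegative combination $\sum_i c_i l_{r_i}$ with $c_i \geq 0$ is automatically radially nonincreasing, so the $L^1$-approximation asserted at the start of the proof of \cref{lem:deps_lower_bound} is only available when $h$ is itself (essentially) radially decreasing; the paper's proof therefore uses \cref{prop:K3} implicitly. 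Since every downstream application (\cref{thm:main,thm:A2,thm:bistoch}) assumes \cref{prop:K3} anyway, this is best read as a hypothesis missing from the statement of Claim~2 rather than a defect peculiar to your argument---but you should say explicitly that your proof of Claim~2 invokes \cref{prop:K3}.
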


\begin{proof}
    Claim~2 is proved as \cref{lem:deps_lower_bound} in \cref{sec:thicklemma}.

    To prove Claim~1, after a change of variables, it is enough to consider the case $x=0$. We claim that if $0 \in \bar\Omega$ then
    \begin{equation}\label{eq:hOmega}
        \int_{\Omega} h_{\epsilon}(x)\, dx > 0
    \end{equation}
for any $\epsilon >0$. To prove this, take an arbitrary open ball $B_{\epsilon}$ of radius $\epsilon$ centered at the origin. Since $B_{\epsilon} \cap \Omega$ is open, we can find another ball $B \subset B_{\epsilon} \cap \Omega$. Assume, for contradiction, that $\int_{\Omega} h_{\epsilon}(x)\, dx = 0$.  By \cref{prop:K1}, this implies $\int_{B} h_{\epsilon}(x)\, dx = 0$, and thus $h(x)=0$ for a.e.\ $x \in B$. Now let $A = \{ Ox \in \mathbb{R}^n: O \in O(n), \, x \in B \}$ be the spherical shell generated by $B$. By \cref{prop:K2}, $h$ vanishes almost everywhere on $A$ and \cref{prop:K3} implies that $h(x)=0$ for a.e.\ $x \in \mathbb{R}^n \setminus B_{\epsilon}$. But $\epsilon>0$ was arbitrary, which implies that $h(x)=0$ a.e., contradicting \cref{prop:K4} and proving \eqref{eq:hOmega}.

Using a change of coordinates, \eqref{eq:hOmega} implies that $d_{\epsilon}(x)> 0$ for any $\epsilon > 0$ and $x \in \bar\Omega$. Since $\Omega$ is bounded and $d_{\epsilon}$ is continuous, by compactness of $\bar \Omega$ $d_{\epsilon}$ takes its minimum value at some point $x_0 \in \bar \Omega$. The statement of the proposition follows by setting $c_{\epsilon} = d_{\epsilon}(x_0)$, giving  $d_{\epsilon}(x) \geq c_0 > 0$.
\end{proof}

Let $\Omega \subseteq \mathbb R^n$ and $\epsilon_0 > 0$ be such that $d_\epsilon$ satisfies \cref{prop:d_func} for every $\epsilon \in (0, \epsilon_0)$. Then, for every such $\epsilon$, $p_\epsilon : \Omega \times \Omega \to \mathbb R$ with
\begin{equation}
    p_\epsilon(x,y) = \frac{k_\epsilon(x, y)}{d_\epsilon(x)}
    \label{eq:p_kernel}
\end{equation}
is a well-defined measurable function on $\Omega \times \Omega$. By construction, this function satisfies $p_\epsilon(x, y) \geq 0 $ and $\int_\Omega p_\epsilon(x,s)\, ds = 1$ for every $x,y \in \Omega$, and is therefore a Markov kernel. Note that, unlike $k_\epsilon$, $p_\epsilon$ is in general neither translation-invariant nor symmetric. The procedure to obtain $p_\epsilon$ from $k_\epsilon$ is sometimes referred to as left normalization; e.g., \cite{BerrySauer16}.

\subsection{Smoothing using Markovian kernels}
Similarly to the kernel $k_\epsilon$, the Markov kernel $p_\epsilon$ from~\eqref{eq:p_kernel} has an associated bounded integral operator $P_\epsilon: L^1(\Omega) \to L^1(\Omega)$, defined as
\begin{equation}
    P_\epsilon f = \int_\Omega p_\epsilon(\cdot, y)f(y) \, dy.
    \label{eq:p_op}
\end{equation}
We have $P_\epsilon 1_\Omega = 1_\Omega$ from which it follows that $P_\epsilon$ preserves constant functions on $\Omega$. If $\Omega$ is bounded, we have $L^p(\Omega) \subseteq L^1(\Omega)$ for all $ p \in [1, \infty]$, and thus $P_\epsilon$ is well-defined as a bounded operator from $L^p(\Omega)$ into $L^1(\Omega)$. Since $K_\epsilon$ maps $L^\infty(\mathbb R^n)$ to $C_b(\mathbb R^n)$ and $1/d_\epsilon$ is bounded and continuous (the latter, by \cref{prop:d_func}), we have that $P_\epsilon$ maps functions in $L^\infty(\Omega)$ to continuous bounded functions on $\Omega$.

By the above facts and \cref{lem:sobolev}, we think of $P_\epsilon$ as a smoothing operator on $L^p(\Omega)$ that preserves constant functions. Given a function $f \in L^p(\Omega)$, we will employ $P_\epsilon f$ as an approximation to $f$. The following theorem is our main result that establishes the convergence properties of this approximation under suitable assumptions on the domain and  kernel.

\begin{theorem}
    \label{thm:main}
    Let $\Omega$ be a thick open subset of $\mathbb{R}^n$, and suppose that the shape function $h$ satisfies \cref{prop:K1,prop:K2,prop:K4,prop:K3}. Then, there exists $\epsilon_0 > 0$ such that for every $p \in [1, \infty]$, $\{P_\epsilon\}_{0<\epsilon<\epsilon_0}$ from~\eqref{eq:p_op} is a uniformly bounded family of operators from $L^p(\Omega)$ to itself. If, in addition, $h$ satisfies the decay bound $\lvert h(x) \rvert \leq C (1+ \lVert x\rVert)^{-(n+\delta)}$ for some $C,\delta > 0 $, the following hold.
    \begin{enumerate}
        \item For every $p \in (1, \infty]$ and $f \in L^p(\Omega)$, $P_\epsilon f$ converges to $f$ a.e.
        \item For every  $p \in (1, \infty)$ and $f \in L^p(\Omega)$, $P_\epsilon f$ converges to $f$ in $L^p(\Omega)$ norm.
        \item If $h$ lies in $L^1(\mathbb R^n) \cap L^q(\mathbb R^n)$ with $\frac{1}{p}+\frac{1}{q} = 1$, then for every $p \in (1,\infty]$ and $f \in L^p(\Omega)$  $P_\epsilon f$ lies in $C_b(\Omega)$.
        \item If $\Omega$ is bounded and $h \in L^1(\mathbb R^n) \cap H^r(\mathbb R^n)$, then for every $f \in L^2(\Omega)$ $P_\epsilon f$ lies in $C^r(\Omega)$.
    \end{enumerate}
\end{theorem}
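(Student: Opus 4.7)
The central idea is to write the normalized operator as $P_\epsilon f(x) = (h_\epsilon * \tilde f)(x) / d_\epsilon(x)$, where $\tilde f \in L^p(\mathbb R^n)$ denotes the extension of $f \in L^p(\Omega)$ by zero and $d_\epsilon = h_\epsilon * 1_\Omega$, and then transfer the classical results for convolution with $h_\epsilon$ on $\mathbb R^n$ through this quotient. The thickness hypothesis enters only once: Claim~2 of \cref{prop:d_func} supplies constants $c, \epsilon_0 > 0$ with $d_\epsilon(x) \geq c$ for all $x \in \bar\Omega$ and $\epsilon \in (0, \epsilon_0)$, so $1/d_\epsilon$ is uniformly bounded on $\Omega$. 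Combined with Young's convolution inequality $\lVert h_\epsilon * \tilde f\rVert_{L^p(\mathbb R^n)} \leq \lVert h_\epsilon\rVert_{L^1(\mathbb R^n)} \lVert \tilde f\rVert_{L^p(\mathbb R^n)} = \lVert f\rVert_{L^p(\Omega)}$, this yields $\lVert P_\epsilon f\rVert_{L^p(\Omega)} \leq \lVert f\rVert_{L^p(\Omega)}/c$ for every $p \in [1, \infty]$, establishing the uniform boundedness assertion.

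For Claim~1, I apply \cref{thm:B} simultaneously to $\tilde f \in L^p(\mathbb R^n)$ and to $1_\Omega \in L^\infty(\mathbb R^n)$: outside a Lebesgue-null set, one has $h_\epsilon * \tilde f(x) \to \tilde f(x)$ and $d_\epsilon(x) \to 1_\Omega(x)$. Almost every $x \in \Omega$ lies in this set and also satisfies $1_\Omega(x) = 1$, so $P_\epsilon f(x) \to f(x)/1 = f(x)$. For Claim~2, the pointwise domination $\lvert P_\epsilon f(x)\rvert \leq M_h \tilde f(x)/c$ together with $M_h \tilde f \in L^p(\mathbb R^n)$ furnished by \cref{thm:A} supplies an $\epsilon$-independent $L^p$ envelope for $\lvert P_\epsilon f - f\rvert$. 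The dominated convergence theorem applied to $\lvert P_\epsilon f - f\rvert^p$, bounded above by $(M_h \tilde f/c + \lvert f\rvert)^p \in L^1(\Omega)$ and converging to $0$ a.e.\ by Claim~1, then gives $\lVert P_\epsilon f - f\rVert_{L^p(\Omega)} \to 0$.

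Claims 3 and 4 are transfers of \cref{lem:conv_contin} and \cref{lem:sobolev} through the quotient. For Claim~3, the hypothesis $h \in L^q(\mathbb R^n)$ combined with $\tilde f \in L^p(\mathbb R^n)$ and $1_\Omega \in L^\infty(\mathbb R^n)$ puts both $h_\epsilon * \tilde f$ and $d_\epsilon$ in $C_b(\mathbb R^n)$ via \cref{lem:conv_contin}; dividing by $d_\epsilon \geq c > 0$ on $\bar\Omega$ preserves continuity and boundedness on $\Omega$. For Claim~4, boundedness of $\Omega$ gives $\tilde f, 1_\Omega \in L^2(\mathbb R^n)$, so \cref{lem:sobolev} places both $h_\epsilon * \tilde f$ and $d_\epsilon$ in $C^r(\mathbb R^n)$; on $\Omega$ the denominator is strictly bounded below, so the quotient is $C^r$ there.

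I expect no real obstacles beyond careful bookkeeping. The only substantive input is the uniform lower bound $d_\epsilon \geq c$ coming from thickness, which is already isolated in \cref{prop:d_func}. Once the zero-extension reduces everything to statements about $h_\epsilon * \tilde f$ on $\mathbb R^n$, the four claims follow by pairing a known convolution result (Young, \cref{thm:A}, \cref{thm:B}, \cref{lem:conv_contin}, \cref{lem:sobolev}) with the uniform positivity of the denominator.
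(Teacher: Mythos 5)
Your proof is correct and follows essentially the same route as the paper: factor $P_\epsilon f = (h_\epsilon * \tilde f)/d_\epsilon$, invoke the thickness-derived lower bound $d_\epsilon \geq c$ from \cref{prop:d_func}, and transfer \cref{thm:A}, \cref{thm:B}, \cref{lem:conv_contin}, and \cref{lem:sobolev} through the quotient. The only (harmless) deviations are that you obtain uniform boundedness directly from Young's inequality rather than via the Markovian maximal inequality of \cref{thm:A2}, and you get $d_\epsilon \to 1$ a.e.\ on $\Omega$ by applying \cref{thm:B} to $1_\Omega$ rather than via \cref{cor:degree_fn}; both shortcuts are valid and, if anything, slightly more economical.
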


\begin{proof}
    See \cref{sec:proof_main}.
\end{proof}

\section{Convergence analysis}
\label{sec:convergence_analysis}

The focus of this section will be on proving \cref{prop:d_func} and \cref{thm:main}. We begin by deriving some auxiliary results on thick sets (\cref{sec:thicklemma}) used in the proof of \cref{prop:d_func}. We then examine the case of smoothing bounded measurable functions (\cref{sec:boundedf}). We prove \cref{thm:main} in \cref{sec:proof_main} using the results from \cref{sec:thicklemma,sec:boundedf}.

\subsection{Convolution and thick open sets}\label{sec:thicklemma}

We prove a technical lemma that addresses Claim~2 of \cref{prop:d_func}. The goal is to leverage thickness of $\Omega$ to deduce that for a given shape function $h$ and for small-enough $\epsilon > 0$ (as well as in the limit), the degree function $d_\epsilon = h_{\epsilon}*1_{\Omega}$ from \eqref{eq:degree_fn} satisfies  $d_\epsilon(x) \geq c > 0$ for every $x \in \bar\Omega$. Let us first review a few examples and non-examples of thick open sets.

\paragraph*{A non-example} We construct a (non-thick) bounded open subset $ \Omega \subset \mathbb R$ with no strictly positive lower bound for $l_\epsilon * 1_\Omega$. To a map $r: \mathbb{N} \rightarrow[0,1]$ we associate an open set $\Omega =\bigcup_{n=1}^{\infty}\left(a_n, b_n\right),$ where $\frac{1}{n+1} \leq a_n < b_n \leq \frac{1}{n}$ and
\begin{displaymath}
    b_n + a_n = \frac{1}{n}+\frac{1}{n+1},\quad b_n-a_n =r_n\left(\frac{1}{n}-\frac{1}{n+1}\right).
\end{displaymath}
Intuitively, $\left(a_n, b_n\right)$ is an interval centered at the center of $I = \left(\frac{1}{n+1}, \frac{1}{n}\right)$ that covers a fraction $r_n$ of the length (Lebesgue measure) of $I$. For $\varepsilon=\frac{1}{2 n}-\frac{1}{2 n+2}=\frac{1}{2 n(n+1)}$ and the center point $x_n=\frac{1}{2 n}+\frac{1}{2 n+2}$ we have
$$
    l_{\epsilon}*1_{\Omega}(x_n) = \frac{\left|B_{\varepsilon}\left(x_n\right) \cap \Omega\right|}{\left|B_{\varepsilon}\left(x_n\right)\right|}=r_n,
$$
and we conclude that if $r_n \rightarrow 0$ then $\Omega$ is not thick, i.e., there is no lower bound for $l_{\epsilon}*1_{\Omega}$ as $\epsilon \rightarrow0$.

\paragraph*{Triangles} A triangle in $\mathbb R^2$ is thick. It is not hard to see that if $\theta$ is the smallest angle in a given triangle $\Omega$, then for small enough $\epsilon>0$ we have $l_{\epsilon}*1_{\Omega}(x) \geq \frac{\theta}{2\pi}$ for any $x \in \bar{\Omega}$.

\paragraph*{Finite unions of thick sets} Let $\Omega_n$ be a finite collection of thick sets such that for small $\epsilon>0$, $l_{\epsilon}*1_{\Omega_n}(x) \geq c_n$. Then, $\Omega = \bigcup_{n}\Omega_n$ is a thick set with $l_{\epsilon}*1_{\Omega}(x) \geq \min c_n$. In particular, a finite union of triangles is thick, and this includes polygons and more general irregular domain regions formed by finitely many triangles.

\begin{lemma}\label{lem:deps_lower_bound}
  Let $\Omega$ be a thick open subset of $\mathbb{R}^n$ and $h$ a shape function satisfying \cref{prop:K1,prop:K2,prop:K4}. Then, there exists $c,\epsilon_0 > 0$ such that for every $\epsilon \in (0,\epsilon_0)$ and $x \in \bar\Omega$, we have
$$
d_\epsilon(x) \equiv h_{\varepsilon} * 1_{\Omega}{(x)} \geq c.
$$
\end{lemma}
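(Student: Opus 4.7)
The plan is to rescale out $\epsilon$ so that thickness of $\Omega$ becomes a scale-invariant lower bound on concentric balls at arbitrarily large radius, and then to run a layer-cake argument against the radial profile of $h$.

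First I would fix $x \in \Omega$ and substitute $z = (y-x)/\epsilon$ to write
\[
d_\epsilon(x) = \int_\Omega h_\epsilon(y-x)\,dy = \int_{E_{x,\epsilon}} h(z)\,dz, \qquad E_{x,\epsilon} := (\Omega - x)/\epsilon.
\]
In these rescaled variables, thickness of $\Omega$ becomes $\lvert E_{x,\epsilon} \cap B_r(0) \rvert \geq c_0 \lvert B_r(0) \rvert$ for every $r \in (0, \epsilon_0/\epsilon)$; the gain is that the range of allowed radii grows without bound as $\epsilon \to 0$. Next I would pick $R > 0$ with $\int_{B_R(0)} h \geq 1/2$ (possible since $\int h = 1$) and restrict to $\epsilon \in (0, \epsilon_0/R)$, so the rescaled thickness bound is available at every radius $r \in (0, R]$; it then suffices to bound $\int_{E_{x,\epsilon} \cap B_R(0)} h$ from below.

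The core of the argument is a layer-cake decomposition. Writing $h(z) = \int_0^\infty 1_{\{h > t\}}(z)\,dt$, the super-level sets $\{h > t\}$ are radial by \cref{prop:K2}, and under radial monotonicity (\cref{prop:K3}, in force throughout the paper) they are in fact centered balls $B_{\rho_t}(0)$. Hence $B_R(0) \cap \{h > t\} = B_{\min(R,\rho_t)}(0)$ is itself a centered ball of radius at most $R < \epsilon_0/\epsilon$, and rescaled thickness applied at that radius gives, for every $t \geq 0$,
\[
\lvert E_{x,\epsilon} \cap B_R(0) \cap \{h > t\} \rvert \geq c_0\, \lvert B_R(0) \cap \{h > t\} \rvert.
\]
Integrating in $t$ via layer-cake yields $\int_{E_{x,\epsilon} \cap B_R(0)} h \geq c_0 \int_{B_R(0)} h \geq c_0/2$, so $d_\epsilon(x) \geq c := c_0/2$ for $x \in \Omega$ and $\epsilon < \epsilon_0/R$. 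Extending from $x \in \Omega$ to $x \in \bar\Omega$ is then immediate from continuity of $d_\epsilon = h_\epsilon * 1_\Omega$ on $\mathbb R^n$, which follows from \cref{lem:conv_contin} with $p = \infty$ and $q = 1$.

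The main obstacle is the layer-cake step, which relies on the super-level sets of $h$ being centered balls and hence on radial monotonicity. For general radial $h$ without monotonicity, the sets $\{h > t\}$ may be annuli, and thickness of $\Omega$ does not control the intersection of $\Omega$ with concentric thin annuli (a small ball interior to the annulus can absorb all of the $\Omega$-mass, leaving the annulus essentially empty). The remaining ingredients—change of variables, choice of $R$, and extension to $\bar\Omega$—are routine.
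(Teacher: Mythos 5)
Your argument is correct and is essentially the paper's own proof in a different guise: both reduce to decomposing $h$ into non-negative combinations of centered tophat kernels $l_r$ and applying the rescaled thickness bound to each one. The paper does this discretely, approximating $h$ in $L^1(\mathbb R^n)$ by a simple function $\sum_{i} c_i l_{r_i}$ with $c_i \geq 0$ and using \cref{lem:conv_contin} to convert the $L^1$ error into a uniform error on $h_\epsilon * 1_\Omega$; your layer-cake formula $h = \int_0^\infty 1_{\{h>t\}}\,dt$ is the continuous version of the same decomposition, with the truncation to $B_R(0)$ playing the role of the paper's $\delta$. The final passage from $\Omega$ to $\bar\Omega$ by continuity is identical. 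One remark on the point you flag: you are right that the argument genuinely needs \cref{prop:K3}, which is not among the lemma's stated hypotheses, and the paper's proof has the same hidden dependence --- a non-negative radial function can be $L^1$-approximated by sums $\sum_i c_i l_{r_i}$ with $c_i \geq 0$ only if it is radially non-increasing (otherwise the approximants, being radially decreasing, incur an irreducible $L^1$ error near the origin). The conclusion is in fact false without \cref{prop:K3}: taking $h$ proportional to the indicator of the annulus $B_2(0)\setminus \bar B_1(0)$ and $\Omega = (-1,1)\setminus\bigcup_k\bigl([\epsilon_k,2\epsilon_k]\cup[-2\epsilon_k,-\epsilon_k]\bigr)$ with $\epsilon_k = 4^{-k}$ gives a thick open set with $d_{\epsilon_k}(0)=0$. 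Since the lemma is only invoked downstream under the full set \cref{prop:K1,prop:K2,prop:K3,prop:K4} (as in \cref{thm:main}), this is a defect of the statement's hypotheses rather than of your proof.
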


\begin{proof}
    For any $\delta>0$, there exists a simple radial function $\tilde h\geq 0$ satisfying $\lVert h-\tilde h \rVert_{L^1(\mathbb R^n)}<\delta$.
    Such $\tilde h$ can be expressed as a finite sum $\tilde h(x)=\sum_{i=1}^s c_i l_{r_i}(x)$, where $s \in \mathbb N$, $l_r(x)=r^{-n} 1_B(x / r)$, $\sum_{i=1}^s c_i=\int_{\mathbb R^n} \tilde h(x)\, d x=1$, and $c_i \geq 0$. Note that $\left\|l_r\right\|_{L^1(\mathbb R^n)}=1$. Since $\Omega$ is thick, there exist $c,\epsilon_0>0$ such that for every $i \in \{1, \ldots, s\}$, $\epsilon \in (0, \epsilon_0)$, and $x \in \Omega$, we have $l_{\epsilon r_i}* 1_{\Omega}(x)>c$. But the same lower bound, $c$, applies to any convex combination of $l_{\epsilon r_i}* 1_{\Omega}(x)$, and thus
\begin{equation}
    \label{eq:h_ineq1}
    h_{\epsilon}^{\prime} * 1_{\Omega}(x)=\sum_{i=1}^s c_i l_{\epsilon r_i} * 1_{\Omega}{(x)}>c.
\end{equation}
On the other hand, from \cref{lem:conv_contin}, we have
\begin{equation}
    \label{eq:h_ineq2}
    \lVert h_\epsilon * 1_\Omega - h'_\epsilon * 1_\Omega \rVert_\infty \leq \lVert h - h'\rVert_{L^1(\mathbb R^n)} < \delta.
\end{equation}
Combining \eqref{eq:h_ineq1} and~\eqref{eq:h_ineq2}, we conclude that for every $x \in \Omega$,  $h_{\epsilon} * 1_{\Omega}{(x)} > c - \delta$.
Since $\delta>0$ was arbitrary, we have $h_\epsilon * 1_\Omega(x) \geq c$. Moreover, since $d_\epsilon$ is continuous as a function on $\mathbb R^n$, the bound holds for every $x \in \bar\Omega$.
\end{proof}

\subsection{Smoothing bounded measurable functions}\label{sec:boundedf}

In this subsection, we discuss convergence of $K_\epsilon f$ where $f$ is a bounded measurable function, and we use it to determine the limiting behavior of $K_\epsilon$ as $\epsilon \rightarrow 0$. The following proposition shows that for bounded measurable functions, the Lebesgue differentiation results discussed in \cref{sec:integral_operators} apply to more general families of kernels than the tophat kernels.

\begin{proposition}\label{prop:kernel_eq_lbsg}
     Let $h$ be a shape function satisfying \cref{prop:K1,prop:K2,prop:K4} and $K_{\epsilon}$ be the associated integral operator from \eqref{eq:k_op}. Then, for every bounded, measurable function $f : \mathbb R^n \to \mathbb R$,
$$
\lim _{\epsilon \rightarrow 0} K_{\epsilon}f(x) = \tilde f(x) \equiv \lim _{\epsilon \rightarrow 0} \frac{1}{|B_{\epsilon}(x)|} \int_{B_{\epsilon}(x)} f(y) \,d y
$$
for every $x \in \mathbb{R}^n$, provided that the limit on the right hand side exists.
\end{proposition}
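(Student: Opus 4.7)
The plan is to approximate the shape function $h$ in $L^1(\mathbb{R}^n)$ by a nonnegative simple radial function $\tilde h$ built as a finite convex combination of normalized tophats, argue that convolution against $\tilde h_\epsilon$ converges to $\tilde f(x)$ directly from the hypothesis on ball averages, and then use boundedness of $f$ to convert the $L^1$ approximation error into a uniform-in-$\epsilon$ supremum bound. This is the same approximation strategy that underlies \cref{lem:deps_lower_bound}, adapted here to pointwise convergence of a convolution rather than a lower bound on a degree function.

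Concretely, I will fix $\eta > 0$ (the case $f \equiv 0$ being trivial, we may assume $\|f\|_\infty > 0$) and set $\delta = \eta / (2\|f\|_\infty)$. Then I choose a simple radial $\tilde h \geq 0$ of the form $\tilde h = \sum_{i=1}^s c_i \bar l_{r_i}$, where $\bar l_r(x) := |B_r|^{-1} 1_{B_r}(x)$ is the mean-one tophat on $B_r$, $c_i \geq 0$, $\sum_i c_i = 1$, and $\|h - \tilde h\|_{L^1(\mathbb{R}^n)} < \delta$. For this $\tilde h$,
\[
\tilde h_\epsilon * f(x) = \sum_{i=1}^s c_i \,\bar l_{\epsilon r_i} * f(x) = \sum_{i=1}^s c_i \cdot \frac{1}{|B_{\epsilon r_i}(x)|} \int_{B_{\epsilon r_i}(x)} f(y) \, dy,
\]
and since each $\epsilon r_i \to 0^+$ as $\epsilon \to 0^+$, the hypothesis that $\tilde f(x)$ exists together with $\sum_i c_i = 1$ yields $\tilde h_\epsilon * f(x) \to \tilde f(x)$. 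The approximation error is controlled uniformly in $\epsilon$ via
\[
|h_\epsilon * f(x) - \tilde h_\epsilon * f(x)| \leq \|f\|_\infty \, \|h_\epsilon - \tilde h_\epsilon\|_{L^1(\mathbb{R}^n)} = \|f\|_\infty \, \|h - \tilde h\|_{L^1(\mathbb{R}^n)} < \tfrac{\eta}{2},
\]
so the triangle inequality gives $\limsup_{\epsilon \to 0^+} |h_\epsilon * f(x) - \tilde f(x)| \leq \eta/2$. Since $\eta$ is arbitrary, this proves the claim.

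The main technical point is justifying the existence of such a simple radial approximation $\tilde h$ with the prescribed positivity, tophat-combination, and unit-mass properties in the absence of \cref{prop:K3} (radial monotonicity). This is essentially the step taken at the outset of the proof of \cref{lem:deps_lower_bound}, and it reduces to density of simple nonnegative radial step functions in the radial $L^1$ subspace of $L^1(\mathbb{R}^n)$, combined with a small renormalization to enforce $\int \tilde h = 1$ at negligible cost in $L^1$. Every other step is a direct application of the triangle inequality, Young's inequality in its elementary $L^\infty$--$L^1$ form, and the hypothesis on the existence of the ball averages at $x$.
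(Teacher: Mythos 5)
Your proof is correct and reaches the conclusion by a genuinely different route than the paper. The paper approximates $h$ from below by an \emph{increasing} sequence of simple functions $h^{(j)}$, invokes monotone convergence, and then justifies interchanging the $j$-limit with the $\epsilon$-limit via the Moore--Osgood theorem; you instead fix a single $\delta$-close approximant $\tilde h$, exploit the scale invariance $\lVert h_\epsilon - \tilde h_\epsilon\rVert_{L^1(\mathbb R^n)} = \lVert h - \tilde h\rVert_{L^1(\mathbb R^n)}$ to get an error bound that is uniform in $\epsilon$, and close with a triangle-inequality/$\limsup$ argument. This avoids the double limit entirely and is shorter and more transparent; what the paper's version buys in exchange is essentially nothing here, since the monotonicity of the approximating sequence plays no essential role once the uniform estimate is available. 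One caveat, which you correctly identify as the main technical point but slightly misdiagnose: a nonnegative combination of ball indicators (equivalently, of mean-one tophats) is automatically radially \emph{non-increasing}, so density of nonnegative radial step functions in the radial part of $L^1(\mathbb R^n)$ is not by itself enough --- the normalized indicator of an annulus, for instance, cannot be $L^1$-approximated by such combinations. The step therefore implicitly uses \cref{prop:K3}, or else one must enlarge the approximating class to convex combinations of normalized \emph{annulus} indicators $\lvert B_{r_2}\setminus B_{r_1}\rvert^{-1} 1_{B_{r_2}\setminus B_{r_1}}$, whose scaled averages still converge to $\tilde f(x)$ by subtracting the two ball averages. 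Since the paper's own proof makes the identical move (its $h^{(j)} \nearrow h$ with $h^{(j)}$ a nonnegative combination of ball indicators forces $h$ to be radially non-increasing), this is a shared subtlety rather than a gap relative to the paper, and your argument is otherwise complete.
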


\begin{proof}

Without loss of generality, we prove the statement for $x=0 \in \mathbb{R}^n$ and a positive bounded function $f$, i.e., we prove $ K_\epsilon f (0) =  h_{\epsilon}*f(0) \to \tilde f(0) $.
 Let $h^{(j)}$ be an increasing sequence of simple functions of the form $h^{(j)}(x)=\sum_{i=1}^{s_j} C_{j, i} 1_{B_{j, i}}$, $s_j \in \mathbb N$, that converge to $h$ from below such that $\lVert  h - h^{(j)}\rVert_{L^1(\mathbb R^n)} = \delta_j$.   Here, $B_{j, i} \subset \mathbb R^n$ is a ball centered at the origin, and
\begin{equation}
    \label{eq:simple_func}
    \sum_i^{s_j} C_{j, i}\left|B_{j, i}\right|=1-\delta_j
\end{equation}
with $\delta_j \to 0$. Note that since $\int_{\mathbb R^n} h(x) f(x)\, d x<\infty$ and $h^{(j)} f \nearrow hf$, by the monotone convergence theorem, $\int_{\mathbb{R}^n} h(x) f(x) \, d x=\lim _{j \to \infty} \int_{\mathbb{R}^n} h^{(j)}(x) f(x)\, d x$. This allows us to express the limit of $K_\epsilon f(0)$ as $\epsilon \to 0$ as
\begin{equation}
\begin{aligned}
    \nonumber \lim_{\epsilon\to 0} K_\epsilon f(0) &= \lim _{\epsilon \to 0} \int_{\mathbb{R}^n} h_{\epsilon}(x) f(x)\, d x =\lim _{\epsilon \to 0} \int_{\mathbb{R}^n} \epsilon^{-n} h(x / \epsilon) f(x)\, d x \\
                                         \label{eq:k_lim} & =\lim _{\epsilon \to 0} \lim _{n \to \infty} \int_{\mathbb{R}^n} h^{(j)}(y) f(y \epsilon)\, d y,
\end{aligned}
\end{equation}
where we made the change of variables $y=x/\epsilon$. Next, let $\epsilon=1/m$, $a_{j, m}=\int_{\mathbb{R}^n} h^{(j)}(y) f(y / m)\, d y$, and $a_m = \int_{\mathbb R^n} h(y) f(y/m) \, dy$.
To apply the Moore-Osgood theorem and interchange the order of limits in $j$ and $m$, we show the uniform convergence $a_{j,m} \to a_m$ as $j\to\infty$ with respect to $m \in \mathbb N$.  Let $M$ be an upper bound for $\lvert f\rvert$. For any $\tilde \epsilon > 0$ and $m \in \mathbb N$, there exists $j_* \in \mathbb N$ such that for all $j \geq j_*$, $\lVert h^{(j)}-h\rVert_{L^1\left(\mathbb{R}^n\right)} \leq \tilde\epsilon/M$. This implies,
\begin{displaymath}
    \left|a_{j, m}-a_m\right| =\left| \int_{\mathbb{R}^n}\left(h^{(j)}(y)-h(y)\right) f(y / m)\, d y \right| \leq M \int_{\mathbb{R}^n}\left|h^{(j)}(y)-h(y)\right|\, d y \leq \tilde\epsilon,
\end{displaymath}
and we conclude that the convergence of $a_{j,m}$ to $a_m$ is uniform with respect to $m \in \mathbb N$. After switching the order of limits in~\eqref{eq:k_lim}, we get
$$
\begin{aligned}
    \lim _{\epsilon \to 0} K_\epsilon f(0) &= \lim _{j \to \infty} \lim _{\epsilon \to 0} \int_{\mathbb{R}^n} h^{(j)}(y) f(y \epsilon)\, d y
                                                    =\lim _{j \to \infty} \lim _{\epsilon \to 0} \int_{\mathbb R^n} \epsilon^{-n}  h^{(j)}(x / \epsilon) f(x) \, dx \\
                                                   & =\lim _{j \to \infty} \lim _{\epsilon \to 0} \int_{\mathbb R^n} \epsilon^{-n} \sum_{i=1}^{s_j} C_{j, i} 1_{\epsilon B_{j, i}}(x)\, f(x)\, dx \\
                                                   & =\lim _{j \to \infty} \lim _{\epsilon \to 0}  \sum_{i=1}^{s_j} \epsilon^{-n} C_{j, i} \int_{\epsilon B_{j, i}} f(x)\, d x \\
                                                   & =\lim _{j \to \infty}  \sum_{i=1}^{s_j} C_{j, i} \left| B_{j, i} \right| \lim_{\epsilon \to 0} \frac{1}{\left| \epsilon B_{j, i} \right|} \int_{\epsilon B_{j, i}} f(x)\, d x\\
                                                   & =\lim _{j \to \infty}  \sum_{i=1}^{s_j} C_{j, i}\left|B_{j, i}\right| \tilde{f}(0)
=\tilde{f}(0),
\end{aligned}
$$
where we used~\eqref{eq:simple_func} to arrive at the last equality.
\end{proof}

\begin{corollary}\label{cor:degree_fn} Under the assumptions of \cref{prop:kernel_eq_lbsg}, the degree functions $d_\epsilon$ from~\eqref{eq:degree_fn} associated with $h$ and a measurable set $\Omega \subseteq \mathbb R^n$ satisfy $\lim_{\epsilon\to0} d_\epsilon(x) = \rho_\Omega(x)$ for  $x \in \Omega$, where $\rho_\Omega$ is the Lebesgue density function of $\Omega$ from~\eqref{eq:lebesgue_density}.
    \label{cor:degree_fn_lbsg}
\end{corollary}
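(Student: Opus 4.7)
The plan is to apply \cref{prop:kernel_eq_lbsg} directly to the bounded measurable function $f = 1_\Omega$. Since $\Omega$ is measurable, $1_\Omega$ is measurable, and it is bounded by $1$, so the hypotheses of the proposition are satisfied. By the definition \eqref{eq:degree_fn} of the degree function, $K_\epsilon 1_\Omega = h_\epsilon * 1_\Omega = d_\epsilon$, so the left-hand side of the proposition's conclusion is exactly $\lim_{\epsilon \to 0} d_\epsilon(x)$.

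For the right-hand side, substituting $f = 1_\Omega$ into the expression for $\tilde f$ in \cref{prop:kernel_eq_lbsg} gives
$$
\tilde f(x) = \lim_{\epsilon \to 0} \frac{1}{|B_\epsilon(x)|} \int_{B_\epsilon(x)} 1_\Omega(y)\, dy = \lim_{\epsilon \to 0} \frac{|B_\epsilon(x) \cap \Omega|}{|B_\epsilon(x)|},
$$
which matches the definition of the Lebesgue density $\rho_\Omega(x)$ from \eqref{eq:lebesgue_density}. The proposition's conclusion is conditional on the existence of this limit; by the Lebesgue differentiation theorem this holds for a.e.\ $x \in \mathbb R^n$, and in particular $\rho_\Omega(x) = 1$ at every interior point of $\Omega$ (where $B_\epsilon(x) \subset \Omega$ for all small enough $\epsilon$, so the ratio is identically $1$).

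I do not expect any substantive obstacle: the corollary is essentially a one-line application of \cref{prop:kernel_eq_lbsg} with the specific choice $f = 1_\Omega$, combined with matching the definitions of $\tilde f$, $d_\epsilon$, and $\rho_\Omega$. The only point requiring a brief remark is that the equality is asserted at those $x \in \Omega$ where $\rho_\Omega(x)$ is defined, which by Lebesgue differentiation covers almost every point of $\Omega$.
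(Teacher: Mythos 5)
Your proposal is correct and is exactly the paper's argument: the corollary is proved by applying \cref{prop:kernel_eq_lbsg} to $f = 1_\Omega$ and matching the resulting $\tilde f(x)$ with the definition of $\rho_\Omega(x)$ in~\eqref{eq:lebesgue_density}. Your additional remarks about measurability, boundedness, and the conditional existence of the limit are accurate elaborations of the same one-line proof.
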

\begin{proof}
    The claim follows by taking $f=1_{\Omega}$ and applying \cref{prop:kernel_eq_lbsg}.
\end{proof}

\Cref{prop:kernel_eq_lbsg} and \cref{cor:degree_fn_lbsg} lead to a pointwise convergence result for $P_{\epsilon}f$. In \cref{sec:proof_main}, we will prove a stronger version of the following corollary:

\begin{corollary}
    Let $\Omega$ be a thick open set in $\mathbb{R}^n$. Let $h \in L^1(\mathbb R^n)$ be a function satisfying \cref{prop:K1,prop:K2,prop:K4} and $P_{\epsilon}$ the associated Markovian integral operator from~\eqref{eq:p_op}. Then, for every bounded measurable function $f: \Omega \to \mathbb R$ and point $x$ in the interior of $\Omega$ we have, $\lim_{\epsilon \rightarrow 0}P_{\epsilon}f(x) = \tilde{f}(x)$, with $\tilde f$ defined as in \cref{prop:kernel_eq_lbsg}.
     In particular, $\lim_{\epsilon \rightarrow 0}P_{\epsilon}f(x) = f(x)$ a.e.\ on $\bar\Omega$.
\end{corollary}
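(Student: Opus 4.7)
The plan is to reduce this corollary to \cref{prop:kernel_eq_lbsg} and \cref{cor:degree_fn_lbsg} by writing $P_\epsilon f$ as a quotient whose numerator and denominator have understood limits. Extend $f$ by zero outside $\Omega$ to obtain a bounded measurable $\bar f : \mathbb R^n \to \mathbb R$. Then, combining~\eqref{eq:p_op} and~\eqref{eq:p_kernel}, for $x \in \Omega$,
\[
    P_\epsilon f(x) = \frac{1}{d_\epsilon(x)} \int_{\mathbb R^n} k_\epsilon(x,y)\, \bar f(y)\, dy = \frac{K_\epsilon \bar f(x)}{d_\epsilon(x)}.
\]
Thickness of $\Omega$ and \cref{prop:d_func} ensure $d_\epsilon(x) \geq c > 0$ for all $\epsilon \in (0,\epsilon_0)$, so this quotient is well-defined.

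Next, I would apply \cref{prop:kernel_eq_lbsg} to the bounded measurable function $\bar f$ to conclude $\lim_{\epsilon\to 0} K_\epsilon \bar f(x) = \widetilde{\bar f}(x)$ whenever the Lebesgue average $\tfrac{1}{|B_\epsilon(x)|}\int_{B_\epsilon(x)} \bar f\, dy$ converges. For any point $x$ in the interior of $\Omega$, openness supplies $\delta > 0$ with $B_\delta(x) \subset \Omega$, so $\bar f|_{B_\epsilon(x)} = f|_{B_\epsilon(x)}$ for all $\epsilon < \delta$; hence $\widetilde{\bar f}(x) = \tilde f(x)$, and both limits exist simultaneously. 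At the same time, \cref{cor:degree_fn_lbsg} gives $d_\epsilon(x) \to \rho_\Omega(x)$, and the openness of $\Omega$ forces $\rho_\Omega(x) = 1$ at every interior $x$. Dividing the two limits yields $\lim_{\epsilon\to 0} P_\epsilon f(x) = \tilde f(x)$ whenever $\tilde f(x)$ is defined, which is the pointwise claim.

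For the a.e.\ statement on $\bar\Omega$, the Lebesgue differentiation theorem applied to the bounded measurable $\bar f$ gives $\tilde f(x) = f(x)$ for a.e.\ $x \in \Omega$, so that $P_\epsilon f(x) \to f(x)$ a.e.\ on $\Omega$. To promote this to ``a.e.\ on $\bar\Omega$'' I would verify that $|\partial \Omega| = 0$ using only thickness: given $x \in \partial\Omega$, choose $y \in B_{\epsilon/2}(x) \cap \Omega$ (which exists since $x \in \bar\Omega$ and $\Omega$ is open), so that $B_{\epsilon/2}(y) \subset B_\epsilon(x)$; applying \cref{def:thick} at $y$ gives $|\Omega \cap B_\epsilon(x)| \geq c\, 2^{-n} |B_\epsilon(x)|$ for all sufficiently small $\epsilon$. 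Since $\Omega \cap \partial\Omega = \emptyset$, the Lebesgue density of $\partial\Omega$ at every boundary point is at most $1 - c\, 2^{-n} < 1$, and by Lebesgue's density theorem this forces $|\partial \Omega| = 0$. I expect this last step to be the only genuinely non-routine piece, since every other step is a direct algebraic combination of already-established limits; everything else follows from \cref{prop:kernel_eq_lbsg}, \cref{cor:degree_fn_lbsg}, and openness of $\Omega$.
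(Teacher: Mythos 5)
Your proof is correct and follows essentially the same route as the paper: write $P_\epsilon f = K_\epsilon \bar f / d_\epsilon$ for the zero-extension $\bar f$, apply \cref{prop:kernel_eq_lbsg} to the numerator and \cref{cor:degree_fn_lbsg} (with $\rho_\Omega = 1$ at interior points) to the denominator, and divide. Your additional verification that thickness forces $\lvert \partial\Omega\rvert = 0$ (so that ``a.e.\ on $\bar\Omega$'' reduces to ``a.e.\ on $\Omega$'') is a correct and worthwhile detail that the paper's proof leaves implicit.
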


\begin{proof}
    Recall that $P_\epsilon f(x) = K_\epsilon f(x) / d_\epsilon(x)$, where $d_\epsilon(x)$ is bounded away from zero by \cref{lem:deps_lower_bound}. Moreover, by \cref{cor:degree_fn_lbsg} we have $\lim_{\epsilon \to 0}d_\epsilon(x) = \rho_\Omega(x) = 1$ for every $x$ in the interior of $\Omega$. The claim then follows by extending $f$ by zero to a function on $\mathbb{R}^n$ and applying \cref{prop:kernel_eq_lbsg} to obtain $ \lim_{\epsilon \rightarrow 0} K_\epsilon f(x) = \tilde{f}(x) / \rho_\Omega(x) = \tilde f(x)$.
\end{proof}

\subsection{Proof of \cref{thm:main}}
\label{sec:proof_main}

We seek analogs of \cref{thm:A,thm:B} for the case where convolution by the function $h_\epsilon$ (equivalently, integration against the translation-invariant kernel $k_\epsilon$) is replaced by integration against the Markov-normalized kernel $p_\epsilon$ from~\eqref{eq:p_kernel} over the thick open set $\Omega \subseteq\mathbb{R}^n$. Note that for a generic $\Omega$, the kernel $p_{\epsilon}$ is no longer translation-invariant and it has non-trivial behavior near the boundary of $\Omega$. In particular, $P_{\epsilon}f(x)$ can blow up if $d_{\epsilon}(x) \rightarrow 0$ at $x\in \partial \Omega$.

Let $\Omega$ be a thick open set and $h \in L^1(\mathbb R^n)$ a shape function. For $\epsilon_0>0$ chosen according to \cref{def:thick}, $p_\epsilon : \Omega \times \Omega \to \mathbb R$ defined in~\eqref{eq:p_kernel}, and $f \in L^p(\Omega)$, we define the Markovian local maximal function on $\Omega$ by
\begin{displaymath}
    {M}^{\Omega}_h f(x)=\sup _{0<\epsilon<\epsilon_0}\left|\int_{\Omega} p_{\epsilon}(x, y) f(y)\,  d y\right|.
\end{displaymath}
Note that, unlike the classical definition in~\eqref{eq:maximal}, the range for the parameter $\epsilon$ is restricted to the interval $\left(0,\epsilon_0\right)$ where we have a strictly positive lower bound for $l_\epsilon * 1_\Omega$. We then have:

\begin{proposition}
    \label{thm:A2}
    Let $\Omega \subseteq \mathbb R^n$ be a thick open set and $h \in L^1(\mathbb R^n)$ a shape function satisfying \cref{prop:K1,prop:K2,prop:K3,prop:K4}. Then, for every $p \in (1,\infty]$ there exists $\tilde A>0$ such that for every $f\in L^p\left(\Omega\right)$ we have
$$
\left\|M^{\Omega}_h f\right\|_{L^p(\Omega)} \leq \tilde A\|f\|_{L^p(\Omega)}.
$$
\end{proposition}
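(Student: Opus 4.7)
The plan is to dominate $M^\Omega_h f$ pointwise by a constant multiple of the classical Hardy--Littlewood maximal function $M_h$ applied to the zero extension of $f$, and then invoke \cref{thm:A}. The thickness hypothesis enters only through the uniform lower bound on the degree function $d_\epsilon$, which has already been established as \cref{lem:deps_lower_bound}. Fix $\epsilon_0 > 0$ and $c > 0$ so that $d_\epsilon(x) \geq c$ for every $\epsilon \in (0,\epsilon_0)$ and $x \in \bar\Omega$.

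Given $f \in L^p(\Omega)$, I would extend it to a function $\tilde f \in L^p(\mathbb R^n)$ by setting $\tilde f \equiv 0$ on $\mathbb R^n \setminus \Omega$, which preserves the $L^p$ norm: $\lVert \tilde f \rVert_{L^p(\mathbb R^n)} = \lVert f\rVert_{L^p(\Omega)}$. Since $k_\epsilon(x,y) = h_\epsilon(y-x)$, for each $x \in \Omega$ and $\epsilon \in (0,\epsilon_0)$ we have
\[
\int_\Omega k_\epsilon(x,y) f(y)\, dy = \int_{\mathbb R^n} h_\epsilon(y-x) \tilde f(y)\, dy = h_\epsilon * \tilde f(x).
\]
Dividing by $d_\epsilon(x)$ and invoking the lower bound $d_\epsilon(x) \geq c$ yields
\[
\left\lvert \int_\Omega p_\epsilon(x,y) f(y)\, dy \right\rvert = \frac{\lvert h_\epsilon * \tilde f(x)\rvert}{d_\epsilon(x)} \leq \frac{1}{c} \lvert h_\epsilon * \tilde f(x)\rvert.
\]
Taking the supremum over $\epsilon \in (0,\epsilon_0)$ and noting that this sub-supremum is bounded above by the supremum over all $\epsilon > 0$, I obtain the pointwise estimate $M^\Omega_h f(x) \leq \frac{1}{c}\, M_h \tilde f(x)$ for every $x \in \Omega$.

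From here, the inequality follows immediately from \cref{thm:A} applied to $\tilde f \in L^p(\mathbb R^n)$: there exists $A > 0$ such that
\[
\lVert M^\Omega_h f\rVert_{L^p(\Omega)} \leq \tfrac{1}{c}\, \lVert M_h \tilde f \rVert_{L^p(\Omega)} \leq \tfrac{1}{c}\, \lVert M_h \tilde f\rVert_{L^p(\mathbb R^n)} \leq \tfrac{A}{c}\, \lVert \tilde f\rVert_{L^p(\mathbb R^n)} = \tfrac{A}{c}\, \lVert f\rVert_{L^p(\Omega)},
\]
so we may take $\tilde A = A/c$. There is no substantive obstacle at this stage; the real work lies in \cref{lem:deps_lower_bound}, which provides the strictly positive lower bound for $d_\epsilon$ on $\bar\Omega$. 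Without that bound (i.e., if $\Omega$ were not thick), the normalization by $d_\epsilon(x)$ could blow up near $\partial\Omega$ and the pointwise domination by $M_h \tilde f$ would fail.
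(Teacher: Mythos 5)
Your proof is correct and follows essentially the same route as the paper's: extend $f$ by zero, bound the Markov-normalized average pointwise by $c^{-1} M_h \tilde f$ using the lower bound on $d_\epsilon$ from \cref{lem:deps_lower_bound}, and conclude via \cref{thm:A} with $\tilde A = A/c$. No gaps.
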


\begin{proof}
By \cref{lem:deps_lower_bound}, thickness of $\Omega$ implies that there exists $c>0$ such that $h_\epsilon*1_{\Omega}(x) \geq c>0$ for every $\epsilon \in (0,\epsilon_0)$ and $x \in \Omega$.
Using the definition of the maximal function $M_h f$ in~\eqref{eq:maximal}, we get
$$
\left|\int_{\Omega} p_{\epsilon}(x, y) f(y)\,  d y\right|= \frac{\left\lvert h_{\epsilon} * f(x) \right\rvert}{h_\epsilon*1_{\Omega}(x)} \leq c^{-1}\lvert h_{\epsilon} * f(x)\rvert \leq c^{-1} M_h f(x),
$$
for every $x \in \Omega$. Meanwhile, using \cref{thm:A}, we have $\lVert M_h f \rVert_{L^p(\Omega)} \leq A \lVert f\rVert_{L^p(\Omega)}$, where we have identified $f$ with a function on $\mathbb R^n$ that vanishes on $\mathbb R^n \setminus \Omega$. Combining the last two bounds, we obtain
\begin{displaymath}
    \lVert {M}^{\Omega}_h f \rVert_{L^p(\Omega)} \leq c^{-1} \lVert M_h f\rVert_{L^p(\Omega)} \leq c^{-1} A \lVert f\rVert_{L^p(\Omega)},
\end{displaymath}
and the claim of the proposition follows with $ \tilde A = c^{-1} A$.
\end{proof}

\begin{corollary}
    \label{cor:p_op_lp}
    The integral operators $\{ P_\epsilon \}_{0<\epsilon<\epsilon_0}$ from~\eqref{eq:p_op} are well-defined and uniformly bounded on $L^p(\Omega)$ for every $p \in [1,\infty]$.
\end{corollary}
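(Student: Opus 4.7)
The plan is to leverage the uniform lower bound $d_\epsilon(x) \geq c > 0$ from \cref{lem:deps_lower_bound} together with the Markov property of $p_\epsilon$ to run a Schur-test style argument that works uniformly over the full range $p \in [1,\infty]$. Note that \cref{thm:A2} already handles $p \in (1,\infty]$ since $\lvert P_\epsilon f(x)\rvert \leq M_h^\Omega f(x)$, but the endpoint $p = 1$ is not covered by it; this is the main subtlety, and circumventing it is the principal obstacle. Fortunately, once the column-sum bound for $p_\epsilon$ is in hand, a single Hölder argument covers all finite $p$ simultaneously.

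First, I would record two properties of the kernel $p_\epsilon$. By construction, $\int_\Omega p_\epsilon(x,y) \, dy = 1$ for every $x \in \Omega$ (row-sum). For the column-sum, using $d_\epsilon(x) \geq c$ for all $x \in \bar\Omega$ and $\epsilon \in (0,\epsilon_0)$,
\begin{equation*}
    \int_\Omega p_\epsilon(x,y) \, dx = \int_\Omega \frac{h_\epsilon(y-x)}{d_\epsilon(x)} \, dx \leq c^{-1} \int_{\mathbb R^n} h_\epsilon(y-x) \, dx = c^{-1},
\end{equation*}
uniformly in $y \in \Omega$ and $\epsilon \in (0,\epsilon_0)$.

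Next, for $p = \infty$, the Markov property gives $\lvert P_\epsilon f(x)\rvert \leq \lVert f\rVert_{L^\infty(\Omega)} \int_\Omega p_\epsilon(x,y)\, dy = \lVert f \rVert_{L^\infty(\Omega)}$, so $\lVert P_\epsilon \rVert_{L^\infty(\Omega) \to L^\infty(\Omega)} \leq 1$. For $p \in [1,\infty)$ with conjugate exponent $q$, I would split $p_\epsilon(x,y) = p_\epsilon(x,y)^{1/q} \cdot p_\epsilon(x,y)^{1/p}$ and apply Hölder's inequality against the row-sum bound, yielding
\begin{equation*}
    \lvert P_\epsilon f(x)\rvert^p \leq \left(\int_\Omega p_\epsilon(x,y)\, dy\right)^{p/q} \int_\Omega p_\epsilon(x,y) \lvert f(y)\rvert^p \, dy = \int_\Omega p_\epsilon(x,y) \lvert f(y) \rvert^p \, dy.
\end{equation*}
Integrating in $x$, applying Fubini (the kernel is nonnegative), and using the column-sum bound then gives $\lVert P_\epsilon f \rVert_{L^p(\Omega)}^p \leq c^{-1} \lVert f \rVert_{L^p(\Omega)}^p$, hence $\lVert P_\epsilon \rVert_{L^p(\Omega) \to L^p(\Omega)} \leq c^{-1/p}$.

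Combining the two regimes yields the uniform bound $\sup_{0 < \epsilon < \epsilon_0} \lVert P_\epsilon \rVert_{L^p(\Omega) \to L^p(\Omega)} \leq \max(1, c^{-1})$ for every $p \in [1,\infty]$, which proves the corollary. The fact that $c$ and $\epsilon_0$ come from \cref{lem:deps_lower_bound} makes the dependence on $h$ and $\Omega$ explicit: the uniform-in-$\epsilon$ character of the lower bound on $d_\epsilon$ is precisely what allows the operator norm bound to be uniform in $\epsilon$ as well.
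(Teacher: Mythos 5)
Your proof is correct, but it takes a genuinely different route from the paper's. The paper proves the corollary in two pieces: for $p=1$ it appeals to the construction of $p_\epsilon$ as a Markov kernel, and for $p\in(1,\infty]$ it invokes the pointwise domination $\lvert P_\epsilon f\rvert \leq M_h^\Omega f$ together with the Markovian maximal inequality of \cref{thm:A2}, which in turn rests on the classical Hardy--Littlewood maximal inequality (\cref{thm:A}) and hence on the radial-decrease hypothesis \cref{prop:K3}. You instead run a Schur test: the row sums of $p_\epsilon$ equal $1$ by the Markov property, the column sums are bounded by $c^{-1}$ via \cref{lem:deps_lower_bound} and $\lVert h_\epsilon\rVert_{L^1(\mathbb R^n)}=1$, and the H\"older/Jensen interpolation $\lvert P_\epsilon f(x)\rvert^p \leq \int_\Omega p_\epsilon(x,y)\lvert f(y)\rvert^p\,dy$ plus Tonelli gives $\lVert P_\epsilon\rVert_{L^p\to L^p} \leq c^{-1/p}$ for all finite $p$ at once, with the $p=\infty$ case trivial. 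Your argument is more elementary (no maximal-function machinery), treats all of $[1,\infty]$ uniformly with an explicit constant $\max(1,c^{-1})$, and notably does not use \cref{prop:K3} at all, since \cref{lem:deps_lower_bound} needs only \cref{prop:K1,prop:K2,prop:K4}; it also implicitly settles well-definedness by applying the same estimate to $\lvert f\rvert$. What the paper's route buys is economy within its larger plan: the maximal inequality of \cref{thm:A2} is needed anyway to furnish the dominating function $M_h^\Omega f$ used in the proof of Claim~2 of \cref{thm:main}, so the boundedness of $P_\epsilon$ falls out of it for free, whereas your Schur bound, while cleaner for this corollary in isolation, would not substitute for the maximal inequality in that later step.
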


\begin{proof}
    The well-definition and uniform boundedness of $P_\epsilon : L^1(\Omega) \to L^1(\Omega)$ for $0< \epsilon<\epsilon_0$ follows by construction of the kernel $p_\epsilon$ in~\eqref{eq:p_kernel} under \cref{prop:K1,prop:K2,prop:K3,prop:K4} and thickness of $\Omega$; see \cref{sec:integral_operators,sec:markov_normalization}. In the case $p\in (1,\infty]$, for any $\epsilon \in (0,\epsilon_0)$ and $f \in L^p(\Omega)$ we have $\lVert P_\epsilon f\rVert_{L^p(\Omega)} \leq \lVert M_h^\Omega f \rVert_{L^p(\Omega)}$. The claim follows from \cref{thm:A2} with $P_\epsilon: L^p(\Omega) \to L^p(\Omega)$ bounded in operator norm by $\tilde A$.
\end{proof}

\begin{proposition}
    \label{thm:B2}
    Let $\Omega \subseteq \mathbb R^n$ be a thick open set and $h $ a shape function satisfying \cref{prop:K1,prop:K2,prop:K3,prop:K4} and the decay bound $\lvert h(x)\rvert \leq C(1+ \lVert x\rVert)^{-(n+\delta)}$ for some $C,\delta > 0$. Let $P_\epsilon$ be the induced Markovian integral operators from~\eqref{eq:p_op}. Then, for every $f\in L^p\left(\Omega\right)$ with $1<p\leq \infty$, $P_\epsilon f$ converges to $f$ a.e.\ as $\epsilon\to 0^+$.
\end{proposition}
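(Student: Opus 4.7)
The plan is to decompose $P_\epsilon f$ as a quotient and treat the numerator and denominator separately, leveraging results already established in the paper. First I would extend $f$ by zero outside $\Omega$ to a function $\tilde f \in L^p(\mathbb R^n)$; since $\lVert \tilde f\rVert_{L^p(\mathbb R^n)} = \lVert f\rVert_{L^p(\Omega)}$, this extension is well-defined for every $p \in (1,\infty]$. Using the radial symmetry of $h$ (\cref{prop:K2}), the numerator of $P_\epsilon f$ from~\eqref{eq:p_op} coincides with $K_\epsilon \tilde f = h_\epsilon * \tilde f$, so that for every $x \in \Omega$,
\[
    P_\epsilon f(x) = \frac{K_\epsilon \tilde f(x)}{d_\epsilon(x)}.
\]

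Next I would appeal to \cref{thm:B} applied to $\tilde f$: its hypotheses on $h$ (conditions \cref{prop:K1,prop:K2,prop:K3,prop:K4} together with the decay bound) are exactly those available here, yielding $K_\epsilon \tilde f(x) \to \tilde f(x) = f(x)$ for a.e.\ $x \in \Omega$. For the denominator, I would use that $\Omega$ is open, so for every $x \in \Omega$ there is some $\epsilon_x>0$ with $B_{\epsilon_x}(x) \subset \Omega$, and hence the Lebesgue density $\rho_\Omega(x) = 1$ at every point of $\Omega$. Then \cref{cor:degree_fn} supplies the pointwise limit $d_\epsilon(x) \to 1$ on $\Omega$, while \cref{lem:deps_lower_bound} gives the uniform lower bound $d_\epsilon \geq c > 0$ on $(0,\epsilon_0) \times \bar\Omega$, guaranteeing that the quotient is well-defined and that its reciprocal converges pointwise to unity. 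Combining these two ingredients,
\[
    P_\epsilon f(x) = \frac{K_\epsilon \tilde f(x)}{d_\epsilon(x)} \xrightarrow[\epsilon \to 0^+]{} \frac{f(x)}{1} = f(x) \quad \text{for a.e. } x \in \Omega.
\]

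The argument is essentially bookkeeping: the hard analytic work has been carried out in \cref{thm:B}, \cref{lem:deps_lower_bound}, and \cref{cor:degree_fn}. The one point that requires a moment's thought is that the zero-extension $\tilde f$ does not alter the a.e.\ pointwise limit of the numerator at points of $\Omega$, which is automatic because $\tilde f \equiv f$ on $\Omega$ and \cref{thm:B} returns a.e.\ convergence on all of $\mathbb R^n$. The behavior of $d_\epsilon$ near $\partial\Omega$ is inconsequential because a.e.\ convergence is claimed only on $\Omega$, which is open. I therefore do not anticipate a substantial obstacle; the proof assembles cleanly from the previously developed maximal inequality and degree-function estimates.
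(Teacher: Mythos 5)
Your argument is correct and follows essentially the same route as the paper: the numerator is handled by \cref{thm:B} applied to the zero-extension of $f$, the denominator by \cref{cor:degree_fn} (with $\rho_\Omega \equiv 1$ on the open set $\Omega$), and the limit is taken as a quotient. The only addition is your explicit invocation of \cref{lem:deps_lower_bound} for well-definedness of the quotient, which the paper leaves implicit; otherwise the proofs coincide.
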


\begin{proof}
    By \cref{cor:degree_fn}, we have $\lim_{\epsilon\to 0^+} h_\epsilon*1_{\Omega}(x) = 1$ for every interior point $x \in \Omega$.  and by \cref{thm:B} $\lim_{\epsilon\to 0^+} h_\epsilon * f = f(x)$ for a.e.\ $x \in \Omega$. Thus, for a.e.\ $x \in \Omega$, we have
    \begin{displaymath}
        \lim_{\epsilon\to 0^+} P_\epsilon f(x) = \frac{\lim_{\epsilon \to 0^+} h_\epsilon * f(x)}{\lim_{\epsilon\to 0^+}h_\epsilon*1_{\Omega}(x)} = f(x).
    \end{displaymath}
\end{proof}

\begin{remark}
    The thickness of $\Omega$ is an important constraint for $h_{\epsilon}*f $ to lie in $ L^p(\Omega)$ and here is another way of looking at this condition. By \cref{lem:deps_lower_bound}, thickness of $\Omega$ is equivalent to the uniform boundedness of
    $$ \frac{1}{l_{\epsilon}*1_{\Omega}(x)}  = \frac{|B_{\epsilon}(x)|}{|B_{\epsilon}(x)\cap\Omega|} \in L^{\infty}(\Omega),$$
    and we can then conclude that
    $$
    \left\lVert \frac{1}{d_{\epsilon}}\cdot h_{\epsilon}*f \right\rVert_{L^{p}(\Omega)} \leq c^{-1}  \| h_{\epsilon}*f \|_{L^{p}(\Omega)} \leq c^{-1}\, A\,  \| f \|_{L^{p}(\Omega)}.
    $$
    However, multiplication by $\frac{1}{d_{\epsilon}}$ is a bounded operator on $L^p(\Omega)$ if and only if $\frac{1}{d_{\epsilon}}$ lies in $L^{\infty}(\Omega)$. This suggests that if $\Omega$ is not thick then $P_{\epsilon}$ may not even be a bounded operator on $L^{p}(\Omega)$.
\end{remark}

We are now ready to prove \cref{thm:main}.

\begin{proof}[Proof of \cref{thm:main}]
    The well-definition and boundedness of $P_\epsilon : L^p(\Omega) \to L^p(\Omega)$ for $p\in[1,\infty]$ is proved by \cref{cor:p_op_lp}. Claim~1 is proved by \cref{thm:B2}.

    Turning to Claim~2, by the dominated convergence theorem for $L^p(\Omega)$, if the sequence $f_j := P_{1/j} f \in L^p(\Omega)$ converges as $j \to \infty$ a.e.\ to $f$ and the terms $f_j$ are dominated in by some function in $L^p(\Omega)$, then $f_j$ converges to $f$ in $L^p(\Omega)$ norm. Thus, since we have a.e.\ convergence of $P_\epsilon f$ to $f$ from Claim~1, to prove Claim~2 it suffices to show that $f_j \leq F$ for some $F \in L^p(\Omega)$. Indeed, by \cref{thm:A2}, we have that for every $\epsilon \in (0, \epsilon_0)$ $g_\epsilon$ is bounded in $L^p(\Omega)$ by $M^{\Omega}_h f$. Setting $F = M^\Omega_h$ thus proves Claim~2.

    Next, to prove Claim~3 we use \cref{lem:conv_contin} to deduce that $K_\epsilon f$ lies $C_b(\Omega)$ for every $f \in L^p(\Omega)$ with $\frac{1}{p} + \frac{1}{q} = 1$. That $P_\epsilon f = \frac{K_\epsilon f}{d_\epsilon}$ also lies in $C_b(\Omega)$ then follows from the fact that $\frac{1}{d_\epsilon} \in C_b(\Omega)$ by \cref{lem:deps_lower_bound}. Finally, to prove Claim~4, if $h \in L^1(\mathbb R^n) \cap H^r(\mathbb R^n)$ it follows from \cref{lem:sobolev} that for every $f \in L^2(\Omega)$ $K_\epsilon f$ lies in $C^r(\Omega)$. If $\Omega$ is bounded, we have $d_\epsilon \in C^r(\Omega)$ since $1_\Omega \in L^2(\Omega)$, and since, by \cref{lem:deps_lower_bound}, $d_\epsilon(x) \geq c >0$ on $\Omega$ we also have $\frac{1}{d_\epsilon} \in C^r(\Omega)$. Thus, $P_\epsilon f = \frac{K_\epsilon f}{d_\epsilon}$ lies in $C^r(\Omega)$.
\end{proof}

\section{Thick sets as metric measure spaces}
\label{sec:metric_measure_spaces}

Recall that a metric measure space $(X,d,\mu)$ is doubling \cite{Heinonen01} if there exists $M>0$ such that for any $r>0$ and $x\in X$,
\begin{equation}
    \label{eq:doubling}
    \mu(B_{2r}(x)) \leq M \mu(B_{r}(x)).
\end{equation}
The base-2 logarithm of the optimal such constant $M$ is known as the doubling dimension of $(X, d, \mu)$. We will say that $(M, d, \mu)$ is locally doubling if \eqref{eq:doubling} holds for all $r \in (0, r_0)$ for some $r_0>0$. Given a measurable set $\Omega \subseteq \mathbb{R}^n$, we consider the measure space $(\Omega, d, \mu)$, where $d$ and $\mu$ are the Lebesgue measure Euclidean distance induced from $\mathbb{R}^n$, respectively.

\begin{proposition}
    If $\Omega \subseteq \mathbb R^n$ is a thick measurable set then $(\Omega,d,\mu)$ is locally doubling.
    \label{prop:doubling}
\end{proposition}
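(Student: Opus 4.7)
The plan is to combine the easy upper bound coming from the Euclidean doubling property of $\mathbb{R}^n$ with the lower bound on $\mu(B_r(x) \cap \Omega)$ that the thickness hypothesis directly supplies. Since the balls in the metric measure space $(\Omega, d, \mu)$ are exactly the sets $B_r(x) \cap \Omega$ (where $B_r(x) \subseteq \mathbb{R}^n$ is the Euclidean ball), local doubling on $\Omega$ reduces to comparing $\mu(B_{2r}(x) \cap \Omega)$ with $\mu(B_r(x) \cap \Omega)$ for $x \in \Omega$ and $r$ in some interval $(0, r_0)$.

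First I would fix $c, \epsilon_0 > 0$ as in \cref{def:thick}, so that $\mu(B_r(x) \cap \Omega) > c\,\mu(B_r(x))$ for every $x \in \Omega$ and $r \in (0, \epsilon_0)$. For the upper side, I use the trivial inclusion $B_{2r}(x) \cap \Omega \subseteq B_{2r}(x)$ and the exact scaling of Lebesgue measure on $\mathbb{R}^n$ to write
\begin{equation*}
   \mu(B_{2r}(x) \cap \Omega) \leq \mu(B_{2r}(x)) = 2^n \mu(B_r(x)).
\end{equation*}
Combining this with the lower bound from thickness gives, for every $x \in \Omega$ and $r \in (0, \epsilon_0)$,
\begin{equation*}
   \mu(B_{2r}(x) \cap \Omega) \leq 2^n \mu(B_r(x)) < \frac{2^n}{c}\,\mu(B_r(x) \cap \Omega).
\end{equation*}
Setting $M = 2^n/c$ and $r_0 = \epsilon_0$ yields the local doubling condition $\mu(B_{2r}(x) \cap \Omega) \leq M\,\mu(B_r(x) \cap \Omega)$ for all $x \in \Omega$ and $r \in (0, r_0)$, which is exactly what is required.

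There is essentially no obstacle here; the only mild subtlety is that we do \emph{not} need $2r < \epsilon_0$, since the lower-bound half of the argument uses thickness only at radius $r$, while the upper-bound half is a pure Euclidean estimate that holds for every radius. This is precisely why thickness, a one-sided local density condition, automatically upgrades to a full local doubling property on $\Omega$. It is worth noting in passing that global doubling need not follow, since the constant $c$ comes with an $\epsilon_0$ and at large radii $\Omega$ may pinch or terminate; this matches the discussion in the introduction of \cref{sec:metric_measure_spaces} distinguishing local from global doubling.
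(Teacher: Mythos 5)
Your proof is correct and follows essentially the same route as the paper's: bound $\mu(B_{2r}(x)\cap\Omega)$ above by the full Euclidean ball $\lvert B_{2r}(x)\rvert = 2^n\lvert B_r(x)\rvert$ and then use thickness at radius $r$ to bound $\lvert B_r(x)\rvert$ by $c^{-1}\mu(B_r(x)\cap\Omega)$, yielding the constant $2^n/c$. Your remark that thickness is only needed at radius $r$ (not $2r$) is a nice clarification but does not change the argument.
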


\begin{proof}
Since $\Omega$ is thick, there exist $r_0>0$ and $c>0$ such that for all $r\in (0, r_0)$ $|B_{r}(x) \cap \Omega| \geq c |B_{r}(x)|$. Therefore,
$$
\mu(B_{2r}(x)) = |B_{2r}(x) \cap \Omega| \leq |B_{2r}(x)| \leq M |B_{r}(x)| \leq \frac{M}{c} |B_{r}(x) \cap \Omega| = \frac{M}{c} \mu(B_{r}(x)),
$$
showing that $\Omega$ satisfies the doubling inequality for $r<r_0$.
\end{proof}

The following proposition establishes that thickness and the doubling property are independent notions.
\begin{proposition}
    \label{prop:not_doubling}\

 \begin{enumerate}
     \item There exist thick measurable sets $\Omega \subset \mathbb R^n$ such that $(\Omega,d,\mu)$ is not doubling.
     \item There exist doubling metric measure spaces $(\Omega, d, \mu)$, $\Omega \subset \mathbb R^n$, such that $\Omega$ is not thick.
 \end{enumerate}
 \end{proposition}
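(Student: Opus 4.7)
My plan is to prove both claims by explicit constructions, one for each direction.

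For Part~1, let $\Omega = \bigsqcup_{k \geq 0} B_{R_k}(x_k) \subset \mathbb{R}^n$ be a disjoint union of open balls with $R_0 = 1$, $R_k = 2^{2^k}$ for $k \geq 1$, and centers $x_0 = 0$ and $x_k$ placed along the first coordinate axis with $D_k := \lVert x_k\rVert > 3 R_k$, arranged so that the balls are pairwise disjoint. Thickness with $c = 2^{-n}$ and $\epsilon_0 = 1$ will follow from the elementary observation that for any $x$ in an open ball $B_R(x_k)$ with $R \geq 1$ and any $\epsilon \leq R$, one can fit a ball of radius $\epsilon/2$, obtained by translating from $x$ toward $x_k$ by $\epsilon/2$, inside both $B_\epsilon(x)$ and $B_R(x_k)$, giving volume at least $2^{-n}\lvert B_\epsilon(x)\rvert$. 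For failure of doubling, I pick $r_k \in (D_{k-1}+R_{k-1},\ D_k - R_k)$ with $2 r_k > D_k + R_k$ (possible under $D_k > 3 R_k$): then $B_{r_k}(0) \cap \Omega$ consists of only the balls $B_{R_j}(x_j)$ with $j < k$, while $B_{2 r_k}(0) \cap \Omega$ additionally captures $B_{R_k}(x_k)$, giving $\mu(B_{2r_k}(0))/\mu(B_{r_k}(0)) \geq R_k^n/\sum_{j<k} R_j^n$, which diverges as $k \to \infty$ because $R_k = 2^{2^k}$ grows super-exponentially (the sum is bounded by $k$ times its largest term $R_{k-1}^n$).

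For Part~2, I take the inward-cusp domain $\Omega = \{(x, y) \in \mathbb{R}^2 : 0 < x < 1,\ 0 < y < x^2\}$. Non-thickness is checked at $p_\epsilon = (\epsilon, \epsilon^2/2) \in \Omega$: the intersection $B_\epsilon(p_\epsilon) \cap \Omega$ is contained in the strip $\{0 < u < 2\epsilon,\ 0 < v < u^2\}$, whose area is at most $\int_0^{2\epsilon} u^2\, du = \frac{8}{3}\epsilon^3$, whereas $\lvert B_\epsilon(p_\epsilon)\rvert = \pi \epsilon^2$, giving ratio $O(\epsilon)$ that falls below any prescribed $c > 0$ for small $\epsilon$. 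For doubling, I carry out a case analysis at $p = (x_0, y_0) \in \Omega$ according to the natural scales $\min(y_0, x_0^2 - y_0)$, $x_0$, and $1 - x_0$: in each regime $\mu(B_r(p))$ is comparable to a fixed polynomial in $r$ and $x_0$ ($\asymp r^2$ when $B_r(p) \subset \Omega$, $\asymp r \cdot x_0^2$ when $r$ bridges the cusp thickness but not the tip distance, $\asymp (x_0 + r)^3$ when $r \gtrsim x_0$, and constant once $r$ exceeds the diameter of $\Omega$), and in each regime $\mu(B_{2r}(p))/\mu(B_r(p))$ is bounded by an absolute constant, with continuity of $r \mapsto \mu(B_r(p))$ taking care of the transitions.

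The main obstacle is the doubling verification in Part~2: tracking uniform bounds across the several regimes and their transitions requires a careful if routine case analysis. The essential reason it works is that the cusp profile $y < x^2$ is purely polynomial, so volumes reduce to integrals $\int u^k\, du$ for $k \leq 2$, under which doubling the upper limit multiplies the integral by the absolute constant $2^{k+1}$.
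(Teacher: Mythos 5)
Your proposal is correct in substance but uses genuinely different constructions from the paper for both claims. For Claim~1, the paper builds (in dimension one) a disjoint union of components $A(x_j;r_j,\alpha_j,\beta_j)$ --- balls with an interior annulus removed --- and violates doubling at the component centers $x_j$ at the scale $r_j$ by letting the ratio $\alpha_j/\beta_j$ of the retained pieces blow up; you instead take a disjoint union of solid balls with super-exponentially growing radii and violate doubling at the single basepoint $0$ across a sequence of growing scales $r_k$. Both exploit unboundedness (necessarily so: a bounded thick set is doubling, since for $r\geq\epsilon_0$ one has $\mu(B_r(x))\geq c\lvert B_{\epsilon_0}\rvert$ while $\mu(B_{2r}(x))\leq\mu(\Omega)$), but your construction works verbatim in every dimension and your thickness and non-doubling verifications are complete modulo the trivial adjustment $c=2^{-n-1}$ to get the strict inequality of \cref{def:thick}. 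For Claim~2, the paper uses the unbounded region under the hyperbola $x_2\leq 1/x_1$ and simply asserts the doubling property; you use the bounded cusp $\{0<y<x^2\}$, give a complete and correct non-thickness computation at the points $(\epsilon,\epsilon^2/2)$, and outline the doubling verification as a multi-regime case analysis. That case analysis is the only place your argument is a sketch rather than a proof: the regimes you identify ($\asymp r^2$ inside, $\asymp r\cdot x_0^2$ across the cusp width, $\asymp (x_0+r)^3$ near the tip, constant at large scales) are the right ones and the uniform bounds do hold, but pinning down the constants (in particular the lower bound on $\mu(B_r(p))$ when $p$ sits very close to the parabola or to the axis and $r$ is comparable to the cusp width) takes some care. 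Since the paper's own proof of Claim~2 offers no verification at all, this is not a gap relative to the paper's standard; what your choice buys is a bounded example and an explicit failure-of-thickness computation, at the cost of a doubling check that is slightly more delicate to write out in full.
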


 \begin{proof}
     Starting from Claim~1, we build explicit examples of thick measurable sets that are not doubling by considering the following family of subsets of $\mathbb R^n$. Let $x \in \mathbb{R}^n$ and for $0\leq \alpha, \beta < \frac{1}{2}$ define
     $$A(x; r,\alpha, \beta) = B_r(x)\setminus \left( B_{r-\alpha r}(x)\setminus B_{\beta r}(x) \right).$$
     In two dimensions, these sets can be constructed by removing an annulus from a disc of radius $r$ centered at $x$. For simplicity, for the rest of this proof we set $n=1$.

     Choosing a countable collection of $x_j, r_j \in \mathbb R$ such that the corresponding balls, $B_{r_{j}}(x_j)$, are disjoint, we define $\Omega \subset \mathbb R$ as the disjoint union $\Omega = \bigcup_{j=1}^{\infty} A(x_j; r_j,\alpha_j, \beta_j)$, with $0\leq \alpha_j, \beta_j < \frac{1}{2}$ and $r_j>0$. Making the observation
     $$
      \frac{\mu(B_{r_j} (x_j))}{\mu(B_{r_j/2} (x_j))} = \frac{r_j - (r_j-\alpha_j r_j) + \beta_j r_j}{\beta_j r_j} = 1 + \frac{\alpha_j}{\beta_j},
      $$
     it is not hard to see that $(\Omega, d, \mu)$ is doubling only if $\alpha_j / \beta_j$ is a bounded sequence. On the other hand, setting $\alpha_j=1/3$, $\beta_j=1/(3j)$, and $r_j =j$, then $\Omega$ is thick (since each connected component of $\Omega$ has length at least $1/3$) but does not satisfy the doubling property at the center points $x_j$ as $j\rightarrow \infty$ (see the figure bellow).\\

     \begin{tikzpicture}[scale=1.5]
  \draw[thin, ->] (-1,0) -- (6.5,0) node[right] {$\dots$};

  \draw[ultra thick, blue] (-1,0) -- (-0.66,0);
  \draw[ultra thick, blue] (-0.33,0) -- (0.33,0);
  \draw[ultra thick, blue] (0.66,0) -- (1.0,0);
  \draw[ultra thick, blue] (2,0) -- (2.66,0);
  \draw[ultra thick, blue] (3.66,0) -- (4.33,0);
  \draw[ultra thick, blue] (5.33,0) -- (6.0,0);

  \foreach \x in {-1,-2/3,-1/3,1/3,2/3,1, 2, 2.66, 3.66, 4.33, 5.33, 6} {
    \fill[blue] (\x,0) circle (1.5pt);
  }

 \foreach \x in {0,4} {
    \fill[blue] (\x,0) circle (1.5pt);
  }

  \foreach \x in {0,4} {
    \node[below] at (\x,-0.1) {$\x$};
  }
    \draw[decorate,decoration={brace,amplitude=10pt},xshift=0pt,yshift=5pt] (-1,0) -- (1,0) node[midway,yshift=15pt] {$A(0;1,\frac{1}{3},\frac{1}{3})$};
    \draw[decorate,decoration={brace,amplitude=10pt},xshift=0pt,yshift=5pt] (2,0) -- (6,0) node[midway,yshift=15pt] {$A(4;2,\frac{2}{3},\frac{1}{3})$};
\end{tikzpicture}




     To prove Claim~2, let $\Omega=\{ x=(x_1,x_2)\in \mathbb{R}^2:\, 0\leq x_1, \, 0\leq x_2 \leq \frac{1}{x_1}  \}$. This set is not thick, however, $(\Omega,d,\mu)$ is doubling with doubling dimension 2.
 \end{proof}

\section{Bistochastic kernels}
\label{sec:bistoch}

In this subsection we consider an alternative kernel normalization procedure than the approach described in \cref{sec:markov_normalization}, leading to a bistochastic kernel whose corresponding integral operators preserve constant functions as well as integrals of functions. Our approach is based on the class of bistochastic kernels studied by Coifman and Hirn in \cite{CoifmanHirn13} which was adapted in \cite{DasEtAl21} to a setting closer to the function smoothing problem studied in this paper.

Assuming, as in \cref{sec:markov_normalization}, that $\Omega \subseteq \mathbb R^n$ is a thick open set so \cref{prop:d_func} holds for every $\epsilon\in (0, \epsilon_0)$, $\epsilon_0>0$, we define the normalization function $q_\epsilon: \mathbb R^n \to \mathbb R_+$ as
\begin{equation}
    \label{eq:q_func}
    q_{\epsilon} = K_\epsilon \left(\frac{1_\Omega}{d_\epsilon}\right) \equiv \int_{\Omega} \frac{k_{\epsilon}(x,y)}{d_{\epsilon}(y)} \, dy.
\end{equation}
Since $\frac{1_\Omega}{d_\epsilon} \in L^\infty(\mathbb R^n)$, it follows from \cref{lem:conv_contin} that $q_\epsilon \in C_b(\mathbb R^n)$. In particular, since $c \leq d_\epsilon \leq 1$ on $\Omega$ (see \cref{lem:deps_lower_bound}), we have
$$
c \leq K_\epsilon 1_{\Omega}(x) \leq K_\epsilon \left(\frac{1_{\Omega}}{d_{\epsilon}(x)}\right) \leq \frac{1}{c} K_\epsilon 1_\Omega(x) \leq \frac{1}{c},
$$
and we deduce that $q_\epsilon$ is bounded above and bounded away from 0 from below, $c \leq q_\epsilon \leq c^{-1}$. As a result, $\tilde p_\epsilon: \Omega \times \Omega \to \mathbb R$ with
\begin{equation}
    \label{eq:p_bistoch}
    \tilde{p}_{\epsilon}(x,y) = \int_{\Omega} \frac{k_{\epsilon}(x,z) k_{\epsilon}(z,y)}{d_{\epsilon}(x) q_{\epsilon}(z) d_{\epsilon}(y)} \, dz
\end{equation}
is a well-defined, measurable kernel. Note the use of right normalization of $k_\epsilon$ in the definition~\eqref{eq:q_func} of $q_\epsilon$ as opposed to left normalization employed in~\eqref{eq:p_kernel} to build the kernel $p_\epsilon$.

One readily verifies that
\begin{displaymath}
    \tilde p_\epsilon(x,y) = \tilde p_\epsilon(y, x), \quad \tilde p_\epsilon(x, y) \geq 0, \quad \int_{\mathbb R^n} \tilde p_\epsilon(x, z) \, dz = 1,
\end{displaymath}
for every $x, y \in \Omega$; that is, $\tilde p_\epsilon$ is a symmetric (bistochastic) Markov kernel on $\Omega$. Moreover, defining $\tilde P_\epsilon : L^1(\mathbb R^n) \to L^1(\mathbb R^n)$ as
\begin{equation}
    \label{eq:p_bistoch_op}
    \tilde{P}_{\epsilon}f(x) = \int_{\Omega} \tilde{p}_{\epsilon}(x,y) f(y)\, dy,
\end{equation}
if follows from symmetry of $\tilde p_\epsilon$ that $\tilde P_\epsilon$ preserves integrals of functions,
\begin{equation}
    \label{eq:mass_preserve}
    \int_{\Omega} \tilde P_\epsilon f(x) \, dx = \int_\Omega f(x) \, dx, \quad \forall f \in L^1(\Omega).
\end{equation}
In addition, we have $\lVert \tilde P_\epsilon f\rVert_{L^1(\Omega)} \leq \lVert f\rVert_{L^1(\Omega)}$ so $\tilde P_\epsilon$ is a contraction on $L^1(\Omega)$. If $f \in L^1(\mu)$ is the density of a measure $\nu_f$ on $\Omega$ with respect to Lebesgue measure, \eqref{eq:mass_preserve} implies that $\nu_f(\Omega) = \nu_{\tilde P_\epsilon f}(\Omega)$ so we can think of $\tilde P_\epsilon$ as a mass-preserving operator.

The following is an analogous convergence result to \cref{thm:main} for integral operators induced by the bistochastic kernels from~\eqref{eq:p_bistoch}.

\begin{theorem}
    \label{thm:bistoch}
    Let $\Omega$ be a thick open subset of $\mathbb{R}^n$, and suppose that the shape function $h$ satisfies \cref{prop:K1,prop:K2,prop:K4,prop:K3}. Then, there exists $\epsilon_0 >0$ such that for every $p \in [1, \infty]$, $\{\tilde P_\epsilon\}_{0<\epsilon<\epsilon_0}$ from~\eqref{eq:p_bistoch_op} is a uniformly bounded family of operators from $L^p(\Omega)$ to itself. In addition, if $\Omega$ is bounded and $h$ satisfies the decay bound $\lvert h(x) \rvert \leq C (1+ \lVert x\rVert)^{-(n+\delta)}$ for some $C,\delta > 0 $, the following hold.
    \begin{enumerate}
        \item For every  $p \in (1, \infty)$ and $f \in L^p(\Omega)$, $\tilde P_\epsilon f$ converges to $f$ in $L^p(\Omega)$ norm.
        \item If $h$ lies in $L^1(\mathbb R^n) \cap L^q(\mathbb R^n)$ with $\frac{1}{p}+\frac{1}{q} = 1$, then for every $p \in (1,\infty]$ and $f \in L^p(\Omega)$, $\tilde P_\epsilon f$ lies in $C_b(\Omega)$.
        \item If $h \in L^1(\mathbb R^n) \cap H^r(\mathbb R^n)$, then for every $f \in L^2(\Omega)$ $P_\epsilon f$ lies in $C^r(\Omega)$.
    \end{enumerate}
\end{theorem}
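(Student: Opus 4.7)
The plan is to reduce \cref{thm:bistoch} to \cref{thm:main} via the Fubini-based factorization
\begin{equation*}
    \tilde P_\epsilon = P_\epsilon \circ S_\epsilon, \qquad S_\epsilon f := \frac{K_\epsilon(1_\Omega f/d_\epsilon)}{q_\epsilon},
\end{equation*}
together with the uniform lower bounds $d_\epsilon, q_\epsilon \geq c > 0$ on $\Omega$ (from \cref{lem:deps_lower_bound} and the bound preceding~\eqref{eq:p_bistoch}) and the pointwise domination
\begin{equation*}
    \tilde p_\epsilon(x, y) \leq c^{-3} H_\epsilon(y - x), \qquad H_\epsilon := h_\epsilon * h_\epsilon,
\end{equation*}
obtained by bounding each denominator below by $c$ and extending the $z$-integration from $\Omega$ to $\mathbb R^n$. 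Note that $\lVert H_\epsilon \rVert_{L^1(\mathbb R^n)} = 1$ and $H_\epsilon$ is an $L^1$ approximate identity.

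For the uniform $L^p(\Omega)$ boundedness, which requires neither the decay bound on $h$ nor boundedness of $\Omega$, I would observe that $\tilde p_\epsilon$ being Markov makes $\tilde P_\epsilon$ an $L^\infty$ contraction, that symmetry of $\tilde p_\epsilon$ combined with Fubini makes it an $L^1$ contraction, and that Riesz--Thorin interpolation extends these to an $L^p$ contraction for every $p \in [1,\infty]$.

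For Claim~1 I would argue by density. Since $C_c(\Omega)$ is dense in $L^p(\Omega)$ for $p \in [1,\infty)$ and $\tilde P_\epsilon$ is uniformly bounded, it suffices to prove $\tilde P_\epsilon g \to g$ in $L^p$ for every $g \in C_c(\Omega)$. For such $g$ I would establish pointwise convergence at $x \in \Omega$ by writing $\tilde P_\epsilon g(x) - g(x) = \int_\Omega \tilde p_\epsilon(x,y)(g(y) - g(x))\, dy$ and splitting over $\{ \lVert y - x \rVert < \delta \}$ and $\{ \lVert y - x \rVert \geq \delta \}$. The near part is at most $\eta$ by uniform continuity of $g$ together with the Markov normalization $\int_\Omega \tilde p_\epsilon(x, y)\, dy = 1$, while the far part is bounded by $2 \lVert g \rVert_\infty c^{-3} \int_{\lVert u \rVert \geq \delta} H_\epsilon(u)\, du$ via the pointwise domination, which vanishes as $\epsilon \to 0^+$ since $H_\epsilon$ is an approximate identity. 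Since $\lvert \tilde P_\epsilon g \rvert \leq \lVert g \rVert_\infty$ and $\Omega$ is bounded, bounded convergence promotes the pointwise convergence to $L^p(\Omega)$ convergence.

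For Claims~2 and~3 I would use the factorization directly. Given $f \in L^p(\Omega)$, the function $1_\Omega f/d_\epsilon$ lies in $L^p(\mathbb R^n)$, so \cref{lem:conv_contin} (for Claim~2, using $h \in L^q$) or \cref{lem:sobolev} (for Claim~3, using $h \in H^r$) places $K_\epsilon(1_\Omega f/d_\epsilon)$ in $C_b(\mathbb R^n)$ respectively $C^r(\mathbb R^n)$. Since $1_\Omega/d_\epsilon \in L^\infty(\Omega) \cap L^2(\Omega)$ (using boundedness of $\Omega$ for the $L^2$ statement), the same lemmas yield $q_\epsilon \in C_b(\mathbb R^n)$ respectively $C^r(\mathbb R^n)$, and dividing by $q_\epsilon$ (continuous, bounded below) leaves $S_\epsilon f$ in $C_b(\Omega)$ respectively $C^r(\Omega)$; in particular $S_\epsilon f \in L^p(\Omega) \cap L^2(\Omega)$. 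Applying Claim~3 respectively Claim~4 of \cref{thm:main} to $P_\epsilon(S_\epsilon f) = \tilde P_\epsilon f$ then closes the argument. The main technical obstacle is establishing the pointwise domination and associated tail estimate underlying Claim~1; once these are in hand, the remaining reductions from \cref{thm:main} via the factorization are routine.
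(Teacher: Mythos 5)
Your proposal is correct, and on the two substantive points it takes a genuinely different route from the paper. For the uniform boundedness, the paper writes $\tilde P_\epsilon$ as the composition $P_\epsilon \circ M_{1/q_\epsilon}\circ P_\epsilon \circ M_{1/d_\epsilon}$ of uniformly bounded factors and invokes \cref{thm:main}; you instead exploit that $\tilde p_\epsilon$ is a \emph{symmetric} Markov kernel, so that $\tilde P_\epsilon$ is simultaneously an $L^1$ and an $L^\infty$ contraction, and interpolate. This is cleaner and gives the sharper conclusion $\lVert \tilde P_\epsilon\rVert_{B(L^p(\Omega))}\leq 1$ rather than a constant depending on $c$ and the maximal-inequality constant. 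For Claim~1, the paper proves strong operator convergence of each factor to the identity --- using \cref{lem:mult_conv_s} together with $L^p$ convergence of $d_\epsilon$ and $q_\epsilon$ to $1_\Omega$ (which rests on \cref{cor:lp_conv_k} and hence on the decay bound) and Claim~2 of \cref{thm:main} --- and then composes; you instead run a density argument with the pointwise domination $\tilde p_\epsilon(x,y)\leq c^{-3}H_\epsilon(y-x)$, $H_\epsilon = h_\epsilon * h_\epsilon = (h*h)_\epsilon$, and a modulus-of-continuity split. Your route bypasses the maximal-function machinery for this claim and, as you note implicitly, uses only $h\in L^1(\mathbb R^n)$, the lower bounds on $d_\epsilon,q_\epsilon$, and boundedness of $\Omega$; the paper's route is shorter given the lemmas already in place and extends verbatim to other strongly convergent factorizations. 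Claims~2 and~3 are handled essentially as in the paper, by pushing regularity through a factorization and citing Claims~3--4 of \cref{thm:main}; your single-step factorization $\tilde P_\epsilon = P_\epsilon\circ S_\epsilon$ with $S_\epsilon f = K_\epsilon(1_\Omega f/d_\epsilon)/q_\epsilon$ is the correct Fubini identity (the paper's displayed composition in~\eqref{eq:p_combo} carries a spurious extra factor of $1/d_\epsilon$ in the middle, which your version avoids). All the auxiliary facts you rely on --- $\int_\Omega\tilde p_\epsilon(x,y)\,dy=1$, the bounds $c\leq d_\epsilon, q_\epsilon\leq c^{-1}$, and that $H_\epsilon$ is an $L^1$ approximate identity --- check out, so the argument is complete.
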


Note that \cref{thm:bistoch} is somewhat weaker than \cref{thm:main} in that it requires that $\Omega$ is bounded and does not make a claim about pointwise convergence of $\tilde P_\epsilon f$ to $f$. For the proof of \cref{thm:bistoch} we will make use of the following result.

\begin{lemma}\label{lem:mult_conv_s}
    Let $\Omega \subseteq \mathbb R^n$ be a measurable set, and consider $f \in L^\infty(\Omega)$ and $ \{ f_\epsilon \in L^\infty(\Omega) \}_{0<\epsilon<\epsilon_0}$ where the $f_\epsilon$ are uniformly bounded in $L^\infty(\Omega)$. For $p \in [1, \infty)$, let $M_f : L^p(\Omega) \to L^p(\Omega)$ (resp.\ $M_{f_\epsilon} : L^p(\Omega) \to L^p(\Omega)$) be the multiplication operator by $f$ (resp.\ $f_\epsilon$). Then, if $f_\epsilon$ converges in $L^p(\Omega)$ to $f$ as $\epsilon \to 0$, $M_{f_\epsilon}$ converges to $M_f$ in the strong operator topology of $B(L^p(\Omega))$.
\end{lemma}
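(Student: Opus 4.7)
The plan is to unpack strong operator convergence directly: I want to show that for each fixed $g \in L^p(\Omega)$ the quantity $\lVert M_{f_\epsilon} g - M_f g\rVert_{L^p(\Omega)} = \lVert (f_\epsilon - f) g\rVert_{L^p(\Omega)}$ tends to $0$ as $\epsilon \to 0$. Let $M$ be a uniform $L^\infty(\Omega)$ bound on the $f_\epsilon$. Together with $f \in L^\infty(\Omega)$, this gives $\lVert M_{f_\epsilon}\rVert_{B(L^p(\Omega))} \leq M$ and $\lVert M_f\rVert_{B(L^p(\Omega))} \leq \lVert f\rVert_{L^\infty(\Omega)}$, so $\{M_{f_\epsilon} - M_f\}$ is uniformly bounded in $B(L^p(\Omega))$. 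This uniform bound is what will allow me to prove the strong convergence on a dense subclass of test functions and then extend it to all of $L^p(\Omega)$.

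The main step is the bounded test-function case. If $g \in L^p(\Omega) \cap L^\infty(\Omega)$, a pointwise estimate gives
\[
\lVert (f_\epsilon - f) g\rVert_{L^p(\Omega)} \leq \lVert g\rVert_{L^\infty(\Omega)} \, \lVert f_\epsilon - f\rVert_{L^p(\Omega)},
\]
and the right-hand side tends to $0$ by hypothesis. To promote this to arbitrary $g \in L^p(\Omega)$, I would use truncations $g_N := g \cdot 1_{\{\lvert g\rvert \leq N\}}$, which lie in $L^p(\Omega) \cap L^\infty(\Omega)$ and satisfy $\lVert g - g_N\rVert_{L^p(\Omega)} \to 0$ as $N \to \infty$ by the dominated convergence theorem. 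Splitting $g = g_N + (g - g_N)$ and applying the uniform operator bound to the tail, I get
\[
\lVert (f_\epsilon - f) g\rVert_{L^p(\Omega)} \leq N \lVert f_\epsilon - f\rVert_{L^p(\Omega)} + \bigl(M + \lVert f\rVert_{L^\infty(\Omega)}\bigr) \lVert g - g_N\rVert_{L^p(\Omega)}.
\]
Given $\eta > 0$, I choose $N$ so the second term is below $\eta/2$, and then $\epsilon$ small enough so the first term is also below $\eta/2$; since $\eta$ is arbitrary, the proof is complete.

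There is no serious obstacle in this argument; it is a textbook strong-operator-convergence reduction from a dense subset. The only point that requires a little care, and that makes the uniform $L^\infty$ hypothesis on the $f_\epsilon$ (rather than only an $L^p$ hypothesis) essential, is controlling the tail term $\lVert (f_\epsilon - f)(g - g_N)\rVert_{L^p(\Omega)}$ uniformly in $\epsilon$: without the uniform $L^\infty$ bound on $f_\epsilon$ one could not absorb the factor $f_\epsilon - f$ into a constant when $g - g_N$ is only controlled in $L^p(\Omega)$.
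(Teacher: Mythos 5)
Your proof is correct and follows essentially the same argument as the paper's: split $g$ against a bounded approximant, bound the tail term using the uniform $L^\infty$ control on $f_\epsilon - f$, and bound the main term by $\lVert f_\epsilon - f\rVert_{L^p(\Omega)}$ times the sup-norm of the approximant. The only difference is cosmetic --- you approximate $g$ by truncations $g\cdot 1_{\{\lvert g\rvert\leq N\}}$ while the paper approximates by a function in $C_c(\Omega)$ --- and both choices work equally well here.
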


\begin{proof}
    See \cref{app:proofs}.
\end{proof}

\begin{proof}[Proof of \cref{thm:bistoch}]
    Observe that
    \begin{equation}
        \tilde P_\epsilon = P_\epsilon \circ M_{1/q_\epsilon} \circ P_\epsilon \circ M_{1/d_\epsilon},
        \label{eq:p_combo}
    \end{equation}
    where $M_{1/d_\epsilon}$ and $M_{1/q_\epsilon}$ are the multiplication operators by $1/d_\epsilon$ and $1/q_\epsilon$, respectively. Since $d_\epsilon \leq c$ and $q_\epsilon \geq c$, we have that for every $p \in [1, \infty]$, $ \{ M_{1/d_\epsilon} \}_{0<\epsilon<\epsilon_0}$ and $ \{ M_{1/q_\epsilon} \}_{0<\epsilon<\epsilon_0}$ are uniformly bounded families of operators on $L^p(\Omega)$. Since, by \cref{thm:main}, $ \{ P_\epsilon \}_{0<\epsilon<\epsilon_0}$ is also uniformly bounded, it follows that $ \{ \tilde P_\epsilon \}_{0<\epsilon<\epsilon_0}$ is uniformly bounded on $L^p(\Omega)$, as claimed.

    Consider now Claim~1. Since $\Omega$ is bounded and $c \leq d_\epsilon \leq 1$, $d_\epsilon$ and $1/d_\epsilon$ lie in $L^p(\Omega)$ for all $p \in [1,\infty]$. By \cref{cor:lp_conv_k}, for every $p \in (1,\infty)$, $d_\epsilon = K_\epsilon 1_\Omega$ converges to $1_\Omega$ in $L^p(\Omega)$. Moreover, we have
    \begin{displaymath}
        \left\lVert 1_\Omega - \frac{1}{d_\epsilon} \right\rVert_{L^p(\Omega)} = \left\lVert\frac{d_\epsilon - 1_\Omega}{d_\epsilon} \right\rVert_{L^p(\Omega)} = \left\lVert\frac{K_\epsilon 1_\Omega - 1_\Omega}{d_\epsilon} \right\rVert_{L^p(\Omega)} \leq c^{-1} \lVert K_\epsilon 1_\Omega - 1_\Omega\rVert_{L^p(\Omega)},
    \end{displaymath}
    and it follows that $1/d_\epsilon$ also converges to $1_\Omega$ in $L^p(\Omega)$. By \cref{lem:mult_conv_s}, we deduce that, as $\epsilon\to 0$, $M_{1/d_\epsilon}$ converges strongly to the identity on $L^p(\Omega)$. One can similarly deduce from the facts that $c \leq q_\epsilon \leq c^{-1}$ and $q_\epsilon = K_\epsilon( \frac{1}{d_\epsilon} ) $ that $M_{1/q_\epsilon}$ converges strongly to the identity on $L^p(\Omega)$. Since the operators $P_\epsilon$, $M_{1/q_\epsilon}$, and $M_{1/d_\epsilon}$ are uniformly bounded in $B(L^p(\Omega))$ for $\epsilon \in (0,\epsilon_0) $, it follows from~\eqref{eq:p_combo} that $\tilde P_\epsilon$ converges strongly to the identity; i.e., $\tilde P_\epsilon f$ converges to $f$ in $L^p(\Omega)$, as claimed.

    Turning to Claim~2, since $M_{1/q_\epsilon}$, $P_\epsilon$, and $M_{1/d_\epsilon}$ are all bounded operators, we have that for every $ f \in L^p(\Omega)$, $g_\epsilon = M_{1/q_\epsilon} P_\epsilon M_{1/d_\epsilon} f$ lies in $L^p(\Omega)$, and thus $\tilde P_\epsilon f = P_\epsilon g_\epsilon$ lies in $C_b(\Omega)$ by \cref{thm:main}. This proves the claim.

    Finally, Claim~3 follows again from \cref{thm:main} in conjunction with the fact that $g_\epsilon$ from above lies in $L^2(\Omega)$ for every $f \in L^2(\Omega)$.
\end{proof}

\section{Numerical approximation}
\label{sec:numerical_approximation}

In this section, we consider the problem of numerically approximating the action of the integral operators introduced in \cref{sec:mathematical_framework,sec:bistoch} on a class of piecewise-continuous functions on a thick, bounded, open domain $\Omega \subset \mathbb R^2$. Specifically, we consider a finite collection of polygons $S_\ell \subseteq \Omega$ indexed by $\ell \in \Lambda$ and functions $f: \Omega \to \mathbb R$ of the form
\begin{equation}
    f(x) = \sum_{\ell\in \Lambda} f_\ell(x),
    \label{eq:f_orig}
\end{equation}
where $f_\ell : \Omega \to \mathbb R$ are functions with $\supp f_\ell \subseteq S_\ell$ and $f_\ell \rvert_{S_\ell}$ bounded continuous. This class of functions is appropriate for the numerical experiments presented in \cref{sec:experiments} where the polygons $S_\ell$ correspond to ice floes as represented within two-dimensional sea ice DEMs and the functions $f_\ell$ represent physical floe variables such as mass density; see \cref{fig:floe} for a schematic.

\begin{figure}
    \centering
      \includegraphics[width=0.8\linewidth]{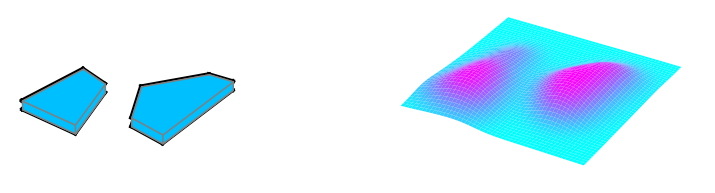}
     \caption{Piecewise-continuous mass density function $m(x)$ associated with two ice floes (left) and the corresponding smooth mass density field obtained by smoothing $m$ by a Markovian integral operator (right).}
      \label{fig:floe}
\end{figure}

Our goal is to approximate functions in this class by kernel smoothed-functions $\hat g_\epsilon : \Omega \to \mathbb R$ where $\hat g_\epsilon$ is a continuous function that numerically approximates $P_\epsilon f$ obtained from the Markov integral operator in \eqref{eq:p_op} (or its bistochastic variant, $\tilde P_\epsilon$ from~\eqref{eq:p_bistoch_op}). As a concrete example, for an input function $f$ from~\eqref{eq:f_orig} the corresponding kernel-smoothed functions $g_\epsilon := P_\epsilon f$ take the form
\begin{equation}
    g_\epsilon(x) = \frac{1}{d_\epsilon(x)} \sum_{\ell\in \Lambda} \int_{S_\ell} k_\epsilon(x,y) f_\ell(y)\, dy, \quad d_\epsilon(x) = \int_\Omega k_\epsilon(x,y) \, dy.
    \label{eq:g_integral2}
\end{equation}
By \cref{thm:main}, we have $g_\epsilon \in C_b(\Omega)$ since $f$ lies in $L^\infty(\Omega)$ by boundedness of $\Omega$ and piecewise-continuity of $f$.

Let $\eta_{\epsilon,x} : \Omega \to \mathbb R$ denote the section of the kernel $k_\epsilon$ at $x \in \Omega$, defined as $\eta_{\epsilon,x}(y) = k_\epsilon(x,y)$. From \eqref{eq:g_integral2}, we see that in order to approximate $g_\epsilon(x)$ at some $x \in \Omega$ it is sufficient to approximate integrals of (i) kernel-weighted functions $y \mapsto \eta_{\epsilon,x}(y) f_\ell(y)$ over the subdomains $S_\ell$; and (ii) the kernel section $y \mapsto \eta_{\epsilon,x}(y)$ over the domain $\Omega$. In \cref{sec:integration_scheme,sec:monte_carlo,sec:triangulation}, we describe methods for approximating these integrals using numerical quadrature. These methods can be modified to deal with higher-dimensional domains and/or other integral operators than $P_\epsilon$ (in particular, the bistochastic operators $\tilde P_\epsilon$).

\subsection{Quadrature scheme}
\label{sec:integration_scheme}

We supplement \cref{prop:K1,prop:K2,prop:K3,prop:K4} required of the shape function $h$ with two further assumptions:
\begin{enumerate}[label=(K\arabic*)]
    \setcounter{enumi}{4}
    \item \label[prop]{prop:K5} $h$ is continuous.
    \item \label[prop]{prop:K6} $h$ is strictly positive.
\end{enumerate}

With these assumptions, given an integration domain $Y \subseteq \Omega$ (which could be one of the subdomains $S_\ell$, or the entire set $\Omega$) and an integrand $\varphi_x: Y \to \mathbb R$ parameterized by $x \in \Omega$ (which could be one of the kernel-weighted functions $\eta_{\epsilon,x} f_\ell$ or the kernel section $\eta_{\epsilon,x}$), we approximate the integral $I [\varphi_x] := \int_Y \varphi_x(y) \, dy$ by
\begin{equation}
    I_N [\varphi_x] := \sum_{j=1}^N w_{j,N} \, \varphi_x(y_{j,N}).
    \label{eq:approx_integral}
\end{equation}
Here, $y_{1,N}, \ldots, y_{N,N}$ are quadrature nodes in $Y$ and $w_{1,N},\ldots, w_{N,N} \in \mathbb R_+$ are corresponding weights. Applying~\eqref{eq:approx_integral} to each integral in~\eqref{eq:g_integral2} leads to the approximation
\begin{equation}
    \hat g_\epsilon(x) \equiv g_{\epsilon,\bm N}(x) := \frac{1}{d_{\epsilon,N_\Omega}(x)} \sum_{\ell \in \Lambda_t} \tilde g_{\epsilon,\ell,N_\ell}(x),
    \label{eq:g_quadrature}
\end{equation}
where $\bm N = (N_\Omega,N_{\ell\in \Lambda})$ is a multi-index parameterizing numerical quadrature over the domains $\Omega$ and $\{ S_\ell \}_{\ell \in \Lambda_t}$, respectively, and $d_{\epsilon,N_\Omega}: \Omega \to \mathbb R$ and $\tilde g_{\epsilon,\ell,N_\ell} : \Omega \to \mathbb R$ are given by
\begin{displaymath}
    d_{\epsilon,N_\Omega}(x) = I_{N_\Omega}[\eta_{\epsilon,x}], \quad \tilde g_{\epsilon,\ell,N_\ell}(x) = I_{N_\ell}[\eta_{\epsilon,x} f_{t,\ell}].
\end{displaymath}
Note that by \cref{prop:K5,prop:K6}, $d_{\epsilon,N_\Omega}$, $\tilde g_{\epsilon,\ell,N_\ell}$, and thus $g_{\epsilon,\bm N}$, are well-defined, bounded continuous functions on $\Omega$. If, in addition, the shape function $h$ lies in $H^r(\Omega)$, then by \cref{thm:main} $g_{\epsilon,\bm N}$ lies in $C^r(\Omega)$. We seek to choose the quadrature nodes and weights used in~\eqref{eq:g_quadrature} such that $g_{\epsilon, \bm N}$ converges to $g_{\epsilon}$ in a limit of $\bm N \to \infty$, uniformly with respect to $x \in \Omega$.

There are several possible quadrature schemes that meet these objectives, of which we describe two examples: Monte Carlo methods and triangulation methods for polygonal domains.

\subsection{Monte Carlo methods}
\label{sec:monte_carlo}

In the Monte Carlo approach, we place each target integration domain $Y$ in a bounding rectangle $R = L_1 \times L_2  \supseteq Y$ (where $L_1$ and $L_2$ are intervals), draw $M$ independent random samples $\tilde y_1, \ldots, \tilde y_M \in R$ from the Lebesgue measure on $R$, and set the quadrature nodes $y_{1,N}, \ldots, y_{N,N}$ to any subset of $\{ \tilde y_1, \ldots, \tilde y_M \}$ that lies in $Y$ (we assume that $M$ is sufficiently large for the chosen $N$). The corresponding weights are chosen uniformly as $w_{j,N} = \lvert R\rvert/M$ where $\lvert R\rvert$ is the area of $R$. An application of the central limit theorem then leads to the classical Monte Carlo error estimate
\begin{equation}
    \left\lvert I_N[\varphi_x] - I[\varphi_x]\right\rvert \simeq \frac{s_x \lvert R\rvert}{\lvert Y\rvert} N^{-1/2}, \quad s_x^2 = \int_R ( \varphi_x(y) - (I [\varphi_x]))^2 \, dy,
    \label{eq:mc_estimate}
\end{equation}
which holds for large $N$ with high probability; e.g., \cite{Caflisch98}.

The constant $s_x$ in~\eqref{eq:mc_estimate} is proportional to the standard deviation of the integrand $\varphi_x$ on the domain $Y$. For the class of bounded, continuous functions appearing in~\eqref{eq:g_integral2} and~\eqref{eq:g_quadrature} as integrands, $s_x$ is bounded over $x \in \Omega$, and we can conclude that $g_{\epsilon,\bm N}$ with $\bm N = (N, \ldots, N)$  converges uniformly to $g_\epsilon$ as $N\to\infty$.

Two important advantages of the Monte Carlo approach are its simplicity and flexibility---other than boundedness of the standard deviations $s_x$ we had to make essentially no explicit assumptions on the properties of $\varphi_x$ and/or integration domains $Y$. At the same time, the standard Monte Carlo scheme does not take into account any additional regularity properties that the integrands may possess that could lead to faster rates of convergence than $N^{-1/2}$. The scheme can also be inefficient in situations where $Y$ is not well-bounded by a rectangle (meaning, $\lvert R\rvert / \lvert Y\rvert$ is large), causing many trial quadrature nodes $\tilde y_j$ drawn from the Lebesgue measure on $R$ to be discarded.

\subsection{Triangulation method}
\label{sec:triangulation}

To overcome the aforementioned shortcomings of the Monte Carlo approach, we use a deterministic quadrature scheme that is based on a triangulation of $\bar\Omega$ and the subdomains $S_\ell$. This approach sacrifices some generality, as it assumes that the integration domains are polygonal and the integrands $\varphi_x$ are smooth, in exchange of potentially obtaining faster convergence rates than~\eqref{eq:mc_estimate}.

We use a triangulation algorithm developed by Shewchuk \cite{Shewchuk2002}, which is a modification of Ruppert's algorithm for two-dimensional quality mesh generation \cite{Ruppert1995}. Through this approach, we generate triangulations of $\bar\Omega$ and $S_\ell$ that satisfy constraints on the minimum angle and maximum triangle area; see \cref{fig:poly_triangulation} for a schematic of this method. We use an implementation of Shewchuk's algorithm provided in the Python library \texttt{triangle}. Additional details on numerical implementation are included in the online repository accompanying the paper.

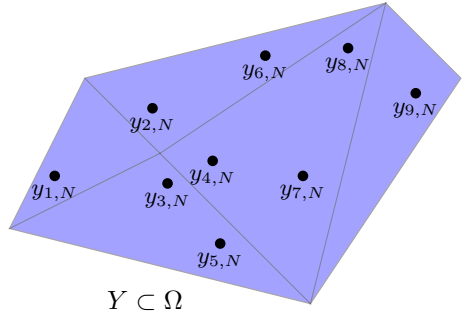
\begin{figure}
    \centering
\begin{tikzpicture}

\filldraw[fill=blue, draw=black, opacity=0.2]
  (0,0) -- (4,-1) -- (6,2) -- (5,3)  -- (1,2)-- cycle;

\filldraw[fill=blue, draw=black, opacity=0.2]
  (0,0) -- (4,-1) -- (2,1) -- cycle;

\filldraw[fill=blue, draw=black, opacity=0.2]
 (4,-1)-- (5,3) -- (2,1) -- cycle;

\filldraw[fill=blue, draw=black, opacity=0.2]
 (5,3) -- (1,2) -- (2,1) -- cycle;

 \filldraw[fill=blue, draw=black, opacity=0.2]
 (1,2) -- (0,0) -- (2,1) -- cycle;

 \filldraw[fill=blue, draw=black, opacity=0.2]
 (5,3) -- (4,-1) -- (6,2) -- cycle;

 \fill (0.6,0.7) circle (2pt) node[below] {\small $y_{1,N}$};
 \fill (1.9,1.6) circle (2pt) node[below] {\small $y_{2,N}$};
 \fill (2.1,0.6) circle (2pt) node[below] {\small $y_{3,N}$};
 \fill (2.7,0.9) circle (2pt) node[below] {\small $y_{4,N}$};
 \fill (2.8,-0.2) circle (2pt) node[below] {\small $y_{5,N}$};
  \fill (3.4,2.3) circle (2pt) node[below] {\small $y_{6,N}$};
 \fill (3.9,0.7) circle (2pt) node[below] {\small $y_{7,N}$};
  \fill (4.5,2.4) circle (2pt) node[below] {\small $y_{8,N}$};
  \fill (5.4,1.8) circle (2pt) node[below] {\small $y_{9,N}$};

  \node[below] at (1.8,-0.7)  {$Y \subset \Omega$};

\end{tikzpicture}
\caption{Schematic of a triangulation of a single polygonal domain $Y$ together with associated quadrature nodes $y_{j,N}\in Y$ (each coming with a corresponding weight $w_{j,N}$).}
    \label{fig:poly_triangulation}
\end{figure}

Next, we examine the numerical quadrature error over a triangulated domain for piecewise-constant functions $f_\ell : S_\ell \to \mathbb R$ and the radially symmetric function $h_\epsilon: \mathbb R^2 \to \mathbb R$ induced from the Gaussian RBF in~\eqref{eq:h_gaussian}. That is, on each polygonal domain $Y$ (where $Y$ is either $\Omega$ or one of the subdomains $S_\ell$) we are interested in approximating integrals of the form $I[\varphi_x] = \int_Y \varphi_x(y) \, dy$ where $\varphi_x = \eta_{\epsilon, x}$ is the kernel section induced from
\[
    h_\epsilon(y) = h_\epsilon(r)=\frac{1}{\epsilon \sqrt{2 \pi}} e^{-\frac{r^2}{2 \epsilon^2}}, \quad r = \lVert y\rVert,
\]
and $x$ is a point in $\Omega$.

Let $\mathcal T = \{ T_j \}_{j=1}^N$ be a triangulation of $Y$. For every triangle $T_j \in \mathcal T$ and $a, b \in T_j$ we have $|\varphi_x(a)-\varphi_x(b)| \leq \lVert h'_\epsilon\rVert_\infty \diam T_j$, where $h'_\epsilon : (0,\infty) \to \mathbb R$ is the radial derivative of $h_\epsilon$.

Let us obtain an upper bound for $\lVert h'_\epsilon\rVert_\infty $. We have,
$$
h'_\epsilon(r)=-\frac{r}{\epsilon^2} h_\epsilon(r), \quad h_\epsilon''(r) =\left(\frac{r^2}{\epsilon^2}-1\right) \frac{h_\epsilon(r)}{\epsilon^2}.
$$
The extrema of $h'_\epsilon(r)$ are at $r= \pm \epsilon$, and thus
\begin{equation}
    \label{eq:h_deriv_bound}
    \lVert h'_\epsilon\rVert_\infty = \frac{1}{\epsilon} h_\epsilon(\epsilon)=\frac{e^{-\frac{1}{2}}}{\epsilon^2 \sqrt{2\pi}}.
\end{equation}
If we estimate the integral $I_j := \int_{T_j} \varphi_x(y) \, dy = \int_{T_j} h_\epsilon(y-x) \, dy$ by
\begin{equation}
    \label{eq:quad_piecewise_const}
    \hat I_j := h_\epsilon(y_j - x) \area T_j,
\end{equation}
where $y_j$ is any point in $T_j$, \eqref{eq:h_deriv_bound} gives the error bound
$$
E_j = \lvert I_j - \hat I_j\rvert \leq \delta_{\epsilon} \diam T_j \area T_j.
$$
Now, for every polygonal domain $Y$, there exists an angle $\alpha \in (0, \pi)$, a constant $A > 0$, and a family of triangulations $\mathcal T_1, \mathcal T_2, \ldots$ with $\mathcal T_N= \{ T_{N,j} \}_{j=1}^N$ such that (i) the angles of every triangle in $\bigcup_{N\in \mathbb N} \mathcal T_N $ are bounded below by $\alpha$; and (ii) the maximum area of the triangles in $\mathcal T_N$ are bounded above by $A / N$. By the triangle area formula, this means that there exists a constant $c_\alpha$ such that for all $N \in \mathbb N$ and $j \in \{1, \ldots, N \}$,
$$
\area T_{N,j} \geq c_\alpha (\diam T_{N,j})^2.
$$
This leads to the following bound for the total error $\mathcal E_N := \lvert I[\varphi_x] - I_N[\varphi_x]\rvert$:
\begin{align}
    \nonumber \mathcal E_N & \leq \sum_j E_j \leq \sum_{j=1}^N \delta_{\epsilon} \area \left(T_j\right) \diam \left(T_{N,j}\right) \leq \sum_{j=1}^N \delta_{\epsilon}c_\alpha^{-\frac{1}{2}} \area \left(T_{N,j}\right)^{\frac{3}{2}}\\
    \label{eq:quad_error} &  \leq \delta_\epsilon c_\alpha^{-1/2} A^{3/2} N^{-1/2}.
\end{align}
Thus, with such a triangulation, as $N\to\infty$ the error associated with the quadrature based on~\eqref{eq:quad_piecewise_const} converges to 0 at a rate $O(N^{-1/2})$ for piecewise-constant functions.

Let $\alpha_* \in (0, \pi)$ be the minimal edge angle over all vertices of the boundary of $Y$. Given an area parameter $a>0$, the algorithm of \cite{Shewchuk2002} produces a triangulation $\mathcal T_{N_a}$ of $Y$ with $N_a$ elements such that (i) the minimal angle of every triangle in $\mathcal T_{N_a}$ is at least $\alpha_*$; and (ii) the maximal area of the triangles in $\mathcal T_N$ is no greater than $a$. In addition, under refinement of the triangulation, $ a \mapsto a / k $ for $k \in \mathbb N$, the number of triangles increases linearly with $k$.  This allows the convergence rate in~\eqref{eq:quad_error} to be attained on a sequence $\mathcal T_{N_{a_1}}, \mathcal T_{N_{a_2}}, \ldots$ with $a_k = a /k$. Faster rates of convergence can be obtained using higher-order quadrature schemes at the expense of a larger prefactor $\delta_\epsilon$ associated with higher-order derivatives of $h_\epsilon$. In this paper, we do not explore these options further as we found that the simple quadrature formula~\eqref{eq:quad_piecewise_const} gave satisfactory numerical results.

\section{Applications to sea ice DEM data}
\label{sec:experiments}

We consider a time-evolving, finite collection of ice floes in a thick, bounded, open domain $\Omega \subset \mathbb R^2$. At any given time $t \in \mathbb R$, each floe is labeled by an index $\ell$ in a (finite) index set $\Lambda_t$ and occupies a bounded, closed subset $S_{t,\ell} \subseteq \bar\Omega$. In addition, every ice floe has associated physical properties which are represented by continuous functions defined on $S_{t,\ell}$.

In what follows, we will be interested in mass density, velocity, and stress tensor fields, denoted as  $m_{t,\ell} : S_{t,\ell} \to \mathbb R$, $\bm v_{t,\ell} : S_{t,\ell} \to \mathbb R^2$, and $\bm \sigma_{t, \ell} : S_{t,\ell} \to \mathbb R^{2 \times 2}$, respectively. We will denote a generic such function by $\tilde f_{t,\ell} : S_{t,\ell} \to \mathbb R^d$. At every time $t$, the floe-local functions $\tilde f_{t,\ell}$ collectively define a field $f_t: \Omega \to \mathbb R^d$ such that $f_t(x) = \sum_{l\in\Lambda_t} f_{t,\ell}$
where $f_{t,\ell}: \Omega \to \mathbb R^d$ is an extension of $\tilde f_{t,\ell}$ on the domain $\Omega$ defined as
\begin{equation}
    f_{t,\ell}(x) =
    \begin{cases}
        \tilde f_{t,\ell}(x), & x \in S_{t,\ell},\\
        0, & \text{otherwise}.
    \end{cases}
    \label{eq:f_ext}
\end{equation}
Note that $f_t$ is a well-defined, piecewise-continuous function on $\Omega$ since $\Lambda_t$ is a finite set and the floe-local functions $\tilde f_{t,\ell}$ are continuous. In particular, $f_t$ is of the form~\eqref{eq:f_orig} with $f_\ell$ in that equation set to the components of the $\mathbb R^d$-valued functions $f_{t,\ell}$.  It should also be noted that the definition~\eqref{eq:f_orig} allows for potential overlaps of the domains $S_{t,\ell}$, which may occur during floe--floe collisions.

Within this setup, we build a family of smooth approximations $g_{t,\epsilon, \bm N_t} : \Omega \to \mathbb R^d$ to $f_t$ obtained by componentwise application of~\eqref{eq:g_integral2} followed by numerical approximation via triangulation method from \cref{sec:triangulation}. Specifically, for a kernel lengthscale parameter $\epsilon$ and quadrature parameterized by the multi-index $\bm N_t$, we use~\eqref{eq:g_quadrature} to obtain kernel-smoothed approximations of $m_t$, $\bm v_t$, and $\bm \sigma_t$, denoted by $m_{t,\epsilon, \bm N_t}: \Omega \to \mathbb R$, $\bm v_{t,\epsilon, \bm N_t}: \Omega \to \mathbb R^2$, and $\bm \sigma_{t,\epsilon,\bm N_t}: \Omega \to \mathbb R^{2\times 2}$, respectively. Throughout, we employ the Gaussian RBF \eqref{eq:h_gaussian} as the kernel shape function $h$. The Gaussian RBF satisfies all of \cref{prop:K1,prop:K2,prop:K3,prop:K4,prop:K5,prop:K6} and the smoothed fields $m_{t,\epsilon,\bm N_t}$, $\bm v_{t,\epsilon,\bm N_t}$, and $\bm \sigma_{t,\epsilon,\bm N_t}$ are all infinitely differentiable.

\subsection{Ice floe variables}
\label{sec:dem_output}

The mass density, velocity field, and stress tensor field are employed in virtually all DEMs and continuum models of sea ice. In what follows, we describe their properties in a scenario where each ice floe is a spatially homogeneous object undergoing rigid body motion and experiencing point contact forces resulting from collisions with other ice floes and/or the domain boundaries. This scenario reflects how sea ice is modeled in the SubZero DEM \cite{ManucharyanMontemuro22} that we employ in our numerical experiments. Other modeling assumptions and/or physical variables can be handled by suitable modifications of the schemes described here.

\paragraph*{Mass density field}

Under our assumption of spatially homogeneous ice floes, the mass density function $m_{t,\ell}$ on each ice floe is constant; that is, we have $m_{t,\ell}(x) = \mu_{t,\ell}$ for all $x \in S_{t,\ell}$ where $\mu_{t,\ell}$. In the SubZero simulations analyzed in this paper, each ice floe has a fixed thickness $a_{t,\ell}$, and we have $\mu_{t,\ell} = \rho a_{t,\ell}$ where $\rho$ is the (constant) density of sea ice. Note that $a_{t,\ell}$ may undergo changes over time resulting from floe processes such as fracture and welding.

\paragraph*{Velocity Field}
Let $\xi_{t,\ell} \in \mathbb R^2$ be the center of mass for the floe $S_{t,\ell}$ and $\bm u_{t,\ell} \in \mathbb R^2$ the corresponding center-of-mass velocity. Under our assumption of rigid-body motion, the ice velocity $\bm v_{t,\ell}(x)$ at a point $x \in S_{t,\ell}$ is the sum of contributions from the center of mass velocity and rotational velocity; i.e., $\bm v_{t,\ell}(x) = \bm u_{t,\ell} + \omega_{t,\ell} \bm r^\perp$,  where $\omega_{t,\ell}$ is the angular velocity (taken positive in the counter-clockwise sense), $\bm r = x - \xi_{t,\ell} \in \mathbb R^2$ is the displacement vector of $x$ from the center of mass $\xi_{t,\ell}$, and $\bm r^\perp \in \mathbb R^2$ is the perpendicular vector to $\bm r $ in the counter-clockwise sense. See \cref{fig:rot_polygon} for a schematic illustration.

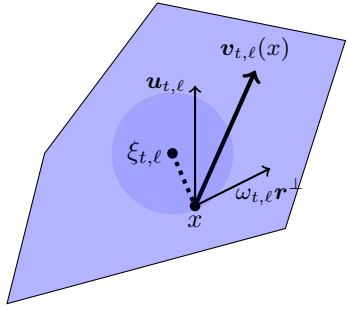
\begin{figure}
    \centering
       \begin{tikzpicture}
          \draw[fill=blue!30] (-0.5,0) -- (1,2) -- (3.5,1.5) -- (2.7,-1) -- (-1,-2) -- cycle;
          \draw[->,thick] (1.5,-0.7) -- (2.5,-0.2);
          \node[below] at (2.5,-0.2)  {\small $\omega_{t,\ell} \bm r^\perp$};

          \draw[dotted, ultra thick] (1.2,0) -- (1.5,-0.7);

          \draw[->, thick, black] (1.5,-0.7) -- (1.5,0.9);
          \node[left] at (1.5,0.9)  {\small $\bm u_{t,\ell}$};

            \draw[->, ultra thick, black] (1.5,-0.7) -- (2.3,1.1);
          \node[above] at (2.3,1.1)  {\small $\bm v_{t,\ell}(x)$};

          \fill (1.2,0) circle (2pt) node[left] {$\xi_{t,\ell}$};
          \fill (1.5,-0.7) circle (2pt) node[below] {$x$};
          \draw[fill=blue, opacity=0.1] (1.2,0) circle (0.8cm);
        \end{tikzpicture}
        \caption{Illustration of the center-of-mass velocity $\bm u_{t,\ell}$, rotational velocity $\omega_{t,\ell} \bm r^\perp$, and total velocity $\bm v_{t,\ell}$ at a point $x \in S_{t,\ell}$.}
    \label{fig:rot_polygon}
\end{figure}

\paragraph*{Stress tensor field}

The stress tensor field $\bm \sigma_{t,\ell}$ on each ice floe is a constant resulting from contact forces it experiences at any given time. Assuming that floe $S_{t,\ell}$ is subjected to contact forces $\bm f_{t,\ell,1}, \ldots, \bm f_{t,\ell,M} \in \mathbb R^2$ acting at points $z_{t,\ell,1}, \ldots, z_{t,\ell,M}$ on its boundary, the resulting stress tensor is given by
\begin{displaymath}
    \bm \sigma_{t,\ell}(x) = \sum_{i=1}^M (\bm r_{t,\ell,i} \bm f_{t,\ell,i}^\top + \bm f_{t,\ell,i}^\top \bm r_{t,\ell,i}),
\end{displaymath}
where $\bm r_{t,\ell,i} = z_{t,\ell,i} - \xi_{t,\ell}$ is the displacement vector of contact point $z_{t,\ell,i}$ from the center of mass $y_{t,l}$ and we treat $\bm f_{t,\ell,i}$ and $\bm r_{t,\ell,i}$ as column vectors.

\subsection{SubZero simulations}

We analyze two ice floe simulations performed using SubZero: (i) Nares Strait simulation; and (ii) lateral compression simulation. Results from these experiments are described in \cref{sec:nares,sec:compression}, respectively.

In each case, the domain $\Omega$ is a subset of a square $\mathcal S \subset \mathbb R^2$ centered at the origin, whose sides have length $140 $ km. In the Nares Strait case, we build $\Omega$ by removing from $\mathcal S$ points corresponding to the Canadian and Greenland land masses bounding the strait; see \cref{fig:nares}. In the lateral compression case, $\Omega$ is the entire square $\mathcal S$.

Each SubZero run generates a time series of ice floes with domains $ \{ S_{t,\ell} \subseteq \Omega \}_{\ell \in \Lambda_t}$ and associated thickness $a_{t,\ell}$, center-of-mass velocity $\bm u_{t, \ell}$, angular velocity $\omega_{t,\ell}$, and contact forces $\bm f_{t,\ell,i}$. We sample these time series at a fixed time interval $\Delta t = Q t_+$, where $Q$ is a positive integer  and $t_+$ the simulation timestep. In our simulations we have $t_+ = 5$ s and $Q = 100$, leading to a sampling interval $\Delta t = 500$ s.

Using the sampled floe data, we compute associated mass, velocity, and stress tensor fields, $m_{t,\ell}$, $\bm v_{t,\ell}$, and $\bm \sigma_{t,\ell}$, respectively, as described in \cref{sec:dem_output}. We then assemble these variables into piecewise-continuous mass density, velocity, and stress tensor fields, $m_t$, $\bm v_t$, and $\bm \sigma_t$, respectively, defined on $\Omega$. For the computation of the stress tensor, we aggregate the contact forces $\bm f_{t,\ell,i}$ on each ice floe over 100 timesteps, appropriately taking into account the floe's fracture history. We smooth these fields using the Markov kernel \eqref{eq:p_kernel} derived from the Gaussian RBF \eqref{eq:h_gaussian} with lengthscale parameter $\epsilon = 1.7$ km.

We numerically approximate the corresponding integral operator $P_\epsilon$ using the triangulation-based quadrature scheme described in \cref{sec:integration_scheme} with maximum triangle area of 0.5 $\text{km}^2$. In our analyzed simulations, the number of ice floes at any time $t$ varies (e.g., due to fracture) between 200 and 1300 and we take one quadrature point from each triangle. Applying the discretized integral operator leads to smooth density, velocity, and stress fields, which we denote here as $\hat m_t \equiv m_{t,\epsilon,\bm N_t} $, $\hat{\bm v}_t \equiv \bm v_{t,\ell, \bm N_t}$, and $\hat{\bm \sigma}_t \equiv \bm \sigma_{t,\ell, \bm N_t}$, respectively. We evaluate the smooth fields on a collection of points $ \{ \bm x_i \} \subset \Omega$ given by the subset of the points in a  $800 \times 800$ uniform grid of the bounding square $\mathcal S$ that lie in $\Omega$. The corresponding grid resolution is $\text{$140/800$ km} = \text{175 m}$, but note that $\hat m_t$, $\hat{\bm v}_t$, and $\hat{\bm \sigma}_t$ are everywhere-defined smooth fields in $\Omega$.

\subsection{Nares Strait simulation}
\label{sec:nares}

The Nares Strait is a narrow channel between Ellesmere Island in the Canadian Archipelago and Greenland that connects the Lincoln Sea of the Arctic Ocean to Baffin Bay in the Atlantic Ocean. Having a length of approximately 500 km and a minimum width of approximately 35 km, it plays an important role in transport between the Arctic and Atlantic Oceans, exhibiting complex sea ice dynamical phenomena such as jamming and arching that have been the focus of several modeling studies with both continuum models and DEMs \cite{ManucharyanMontemuro22,WestEtAl22}. As our first experiment, we examine an idealized simulation of sea ice motion through a domain with a Nares Strait topography, visualized in \cref{fig:nares}.

\begin{figure}
    \centering
      \includegraphics[width=0.7\linewidth]{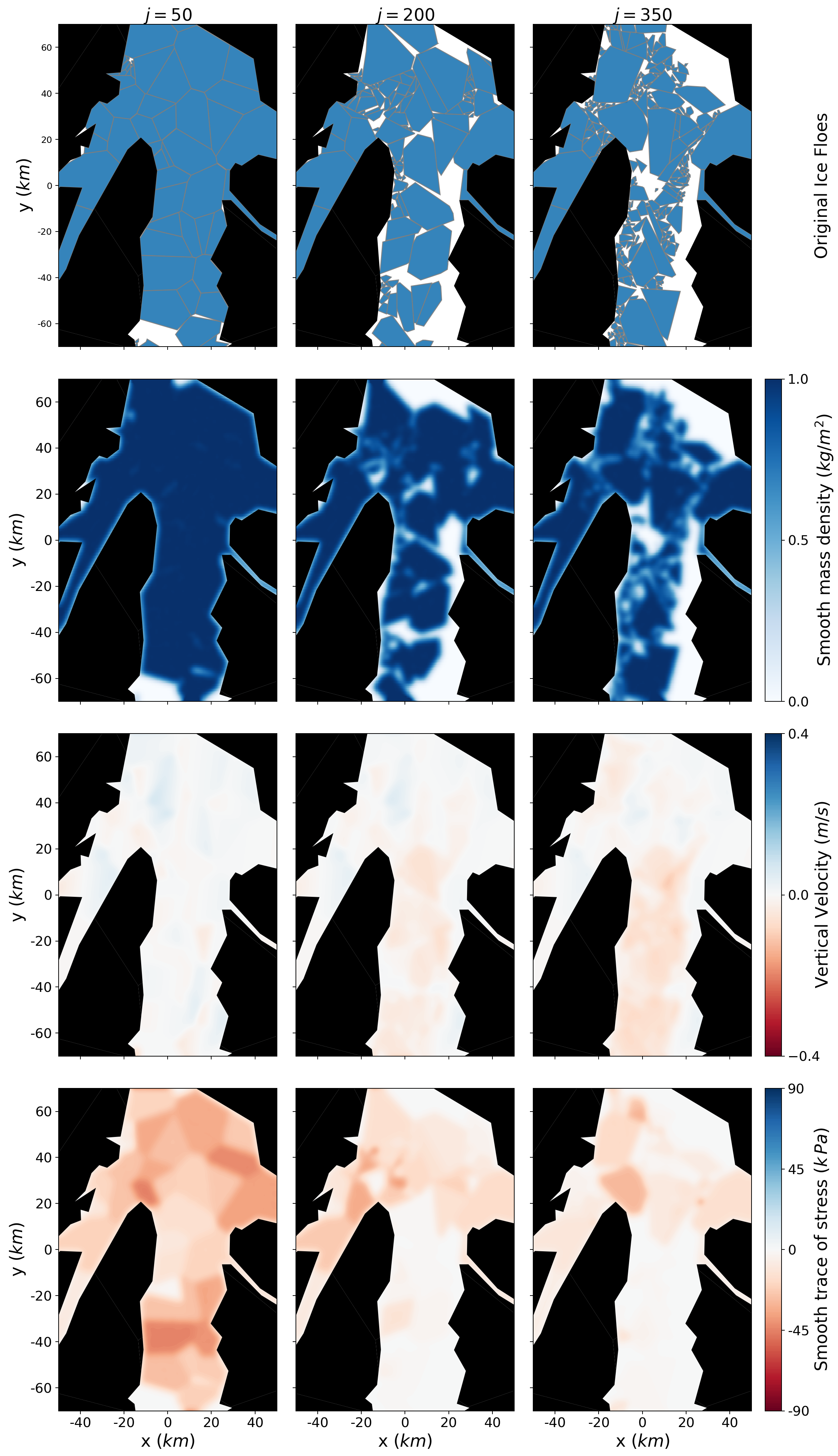}
      \caption{Nares Strait simulation. The top row shows snapshots of the ice floes from the DEM at simulation times $t_j =j \, \Delta t$ with $j= 50$ (left), 200 (center), and 350 (right). The second, third, and fourth rows from the top show snapshots of kernel-smoothed mass density $\hat m_t$, meridional velocity ($y$ component of $\hat{\bm v}_t$), and trace of the stress field $\tr \hat{\bm \sigma_t}$, respectively, at the times $t_j$.}
      \label{fig:nares}
\end{figure}

Ice floes of various shapes and sizes are initially placed in the domain $\Omega$ in a fully packed configuration. The floes are forced southward into the strait by quadratic drag induced from a constant southerly wind of velocity 10 $\text{m}\, \text{s}^{-1}$. In the ensuing simulation, (depicted in the left-hand column of \cref{fig:nares} at representative time instances with time increasing downwards) the floes experience a buildup of stress due to jamming. Eventually, floes begin to fracture, leading to a release of stress and decrease of average floe size that allows ice transport through the strait. In this simulation, floes are unbonded. Contact forces with the domain boundary are determined by numerically treating the coastlines as ice floes that are immovable but otherwise interact with other floes through unmodified contact forces.

The second, third, and fourth columns of \cref{fig:nares} from the left display scatterplots of the mass density field $\hat m_t$, the meridional (North--South) component of the velocity field $\hat{\bm v}_t$, and the trace of the tensor field $\hat{\bm \sigma}_t$. We use the latter to diagnose compressive stresses, which are associated with large negative values of $\tr\hat{\bm \sigma}_t$. Examining the top row in \cref{fig:nares}, we see that early on in the simulation the floes exhibit negligible meridional motion and are subjected to large compressive stresses induced by jamming and wind drag. At later times (second row) we see considerable reduction of stress in the southern half of the domain, as floes located there begin to fracture and move southward towards the exit of the strait. At late times (bottom row of \cref{fig:nares}), we see additional buildup of southward velocity in the southern half of the domain as well as residual compressive stresses in the northern half due to jamming between unfractured floes.

\subsection{Lateral compression simulation}
\label{sec:compression}

In the second experiment, a lateral compression (i.e., compression directed along the $y$-axis) is applied to a square piece of ice of size $\text{100 km} \times \text{100 km}$  located in the center of the $\text{140 km} \times \text{140 km}$  domain $\Omega$. The square ice piece consists of a collection of 200 ice floes of various shapes, arranged in a densely packed configuration. Unlike the Nares Strait case, ice floes in this example are bonded.  Numerically, the bonds are implemented as small-sized ice floes of a ``bow-tie'' shape, which are inserted randomly in the boundaries between floes. The north/south boundaries move at a constant prescribed velocity of $0.1 \, \text{m}\, \text{s}^{-1}$ toward the center. The atmospheric and oceanic stresses are set to zero in this simulation. As stress builds up due to the externally imposed lateral stress, the bow-tie floes can fracture under spatially-uniform fracture criteria, allowing the floes to move in directions transverse to the applied pressure. The resulting dynamical evolution exhibits fracture patterns with long-range correlations resembling LKFs. See the top row of \cref{fig:vertical} for a visualization of the simulation at representative time instances.

\begin{figure}
    \centering
      \includegraphics[width=0.8\linewidth]{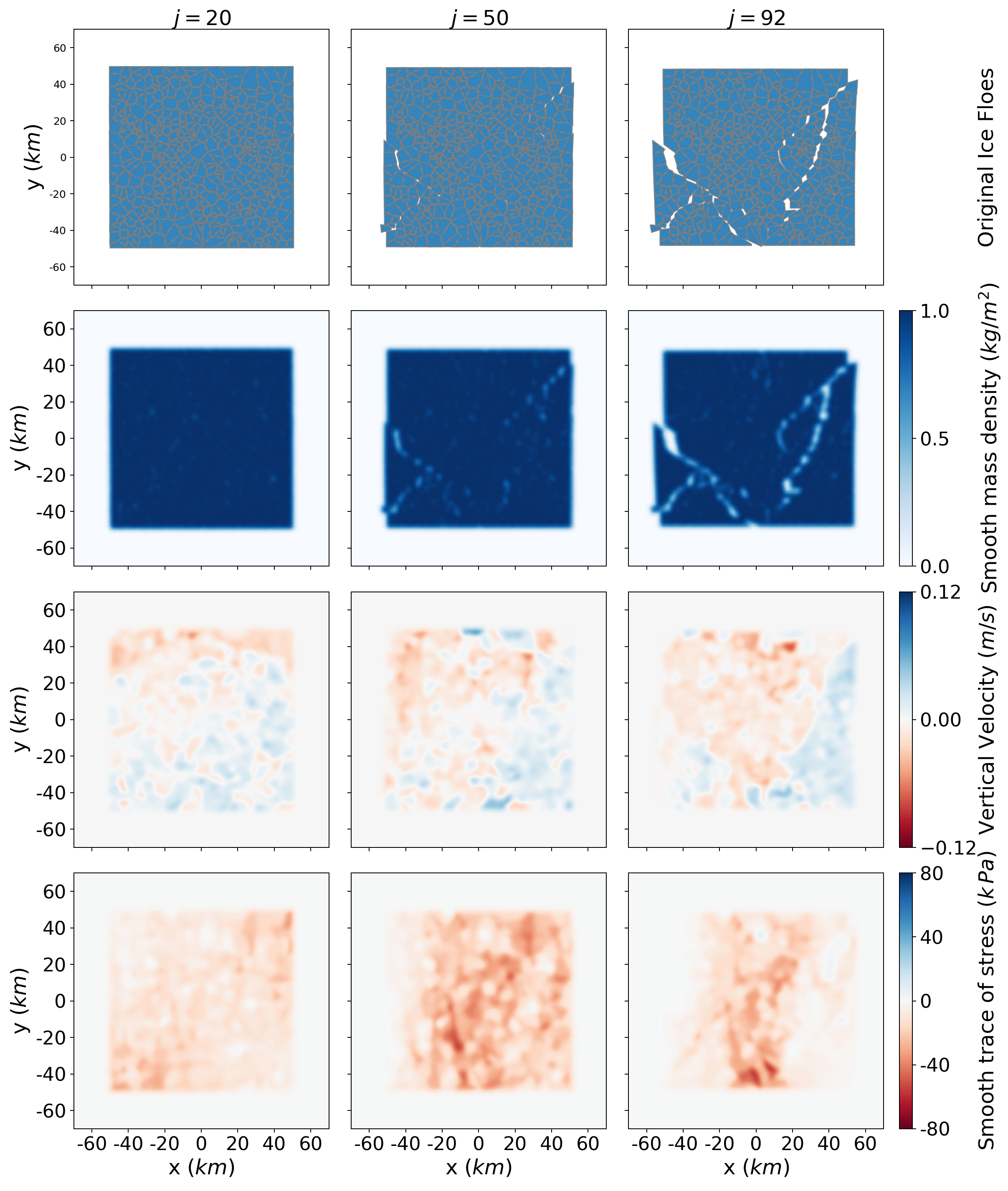}
      \caption{As in \cref{fig:nares}, but for the lateral compression experiment. The time instances $t_j = j \, \Delta t$ shown are for $j=20$ (left), 50 (center), and 92 (right).}
      \label{fig:vertical}
\end{figure}

The second, third, and fourth rows of \cref{fig:vertical} from the top show scatterplots of the kernel-smoothed mass density $\hat m_t$, the $y$ component of velocity $\hat{\bm v}_t$, and trace of the stress tensor $\hat{\bm \sigma}_t$ in a similar manner as \cref{fig:nares}. Early on in the simulation (left column), the stress field exhibits a fairly diffuse compressive pattern in space as the floes are tightly packed and bonds as unbroken. As bonds start to break (center and right columns), we see that stress becomes increasingly concentrated in a wedge-shaped region of high ice concentration that connects the lateral domain boundaries where stress is applied. Meanwhile, chunks of ice adjacent to the central wedge-shape region move sideways, leading to a noticeable pattern in the velocity field component. As with the Nares Strait simulation, the smoothed stress tensor field $\hat{\bm \sigma}_t$ reveals stress patterns that are not readily noticeable in the raw DEM contact forces.

\section{Conclusions}
\label{sec:conclusions}

In this paper, we studied the convergence properties of one-parameter families of integral operators with Markovian kernels constructed from normalization of convolution operators associated with radial kernel functions. Using the theory of Hardy--Littlewood maximal operators in conjunction with a thickness condition that bounds the Lebesgue density of the spatial domain $\Omega$ from below, we proved pointwise and $L^p$ norm convergence results for continuous approximations of $ f \in L^p(\Omega)$, $p \in (1, \infty)$, by $P_\epsilon f$ for Markov operators $P_\epsilon$ obtained by left normalization of integrable radial kernels with lengthscale parameter $\epsilon$ (\cref{thm:main}). We also showed $L^p$ norm convergence for approximations $\tilde P_\epsilon f$ obtained from Markov operators with bistochastic kernels (\cref{thm:bistoch}) that preserve integrals of functions in addition to preserving constant functions. The paper also studied properties of thick sets from the perspective of metric measure spaces, where it was shown that thickness implies a locally doubling condition on $\Omega$ but is in general independent from (global) doubling (\cref{prop:doubling,prop:not_doubling}).

Numerically, our approach can be implemented using a variety of quadrature schemes, including Monte Carlo and mesh-based schemes. Here, we have focused on a deterministic quadrature scheme based on triangulation of polygonal domains. Using this approach, we built smooth approximations of piecewise-continuous mass density, velocity, and stress tensor fields from discrete element models (DEMs) of sea ice dynamics. We used the smoothed fields to characterize stress patterns associated with jamming and fracture of wind-forced floes passing through the Nares Strait and the formation of idealized linear kinematic features resulting from lateral compression. These results demonstrate the utility of kernel smoothing for postprocessing and retrieval of physically meaningful information from DEM simulations.

Throughout this work, the Lebesgue measure on $\mathbb R^n$ played a central role by providing the background measure for Markov normalization. A possible avenue of future work would be to study the convergence properties of Markov operators built from measures other than Lebesgue, such as invariant measures of dynamical systems. From the point of view of applications, it would be fruitful to explore use-cases of the reconstructed smooth fields to infer macro-scale constitutive laws emerging from DEM simulations, along with associated rheology parameterization schemes.

\appendix

\section{Proofs of auxiliary results}
\label{app:proofs}

\begin{proof}[Proof of \cref{lem:conv_contin}]
    For every $q \in [1,\infty)$, the group of translation operators $\{ T_x : L(\mathbb R^n) \to L(\mathbb R^n) \}_{x \in \mathbb R^n}$ extends to a strongly continuous group on $L^q(\mathbb R^n)$ that acts by isometries; that is, the mapping $ x \mapsto T_x h \in L^q(\mathbb R^n) $ is norm-continuous at every $x \in \mathbb R^n$ for every $ h \in L^q(\mathbb R^n)$ and $\lVert T_x h \rVert_{L^q(\mathbb R^n)} = 1$. Since $L^p(\mathbb R^n) \simeq L^{q*}(\mathbb R^n)$, $\varphi : h \mapsto \int_{\mathbb R^n} f(y) h(y)\, dy$ is a bounded functional on $L^q(\mathbb R^n)$. Moreover, for every $x \in \mathbb R^n$ we have $ f * h(x) = \varphi((T_x h)^\checkmark)$. The continuity of $h * f$ follows from the facts that $\varphi$ and $^\checkmark$ are continuous maps on $L^q(\mathbb R^n)$ and $\{ T_x : L^q(\mathbb R^n) \to L^q(\mathbb R^n)\}_{x \in \mathbb R^n}$ is strongly continuous. The bound $ \lVert h * f \rVert_\infty \leq \lVert h \rVert_{L^q(\mathbb R^n)} \lVert f\rVert_{L^p(\mathbb R^n)}$ follows from H\"older's inequality and the fact that $T_x$ acts on $L^q(\mathbb R^n)$ as an isometry.
\end{proof}

\begin{proof}[Proof of \cref{lem:sobolev}]

    By a change of variables, it is enough to prove the claim for $\epsilon = 1$.

    Starting from the $r=1$ case, let $e_1, \ldots, e_n \in \mathbb R^n$ be the standard basis vectors of $\mathbb R^n$ and $D_j : E_j \to \mathbb L^2(\mathbb R^n)$ the generators of the (unitary) translation group on $L^2(\mathbb R^n)$ along $e_j$. That is, $D_j$ is a skew-adjoint operator defined on a dense subspace $E_j \subset L^2(\mathbb R^n)$ by the $L^2(\mathbb R^n)$-norm limit $ D_j h = \lim_{s\to 0} (T_{se_j} h - h) / s$. Let $\hat E_j = \{ \hat h \in L^2(\mathbb R^n) : \int_{\mathbb R^n} \omega_j^2 \lvert \hat h(\omega)\rvert^2 \, d\omega < \infty \}$. The generators $D_j$ map under the unitary Fourier transform $\mathcal F: L^2(\mathbb R^n) \to L^2(\mathbb R^n)$ to the multiplication operators $\hat D_j : \hat E_j \to L^2(\mathbb R^n)$ where $\hat D_j = \mathcal F^* D_j \mathcal F$ and $\hat D_j \hat h(\omega) = i \omega_j \hat h(\omega)$. For $h \in H^1(\mathbb R^n)$ we have that $\omega \mapsto i\omega_j \hat h(\omega) \in L^2(\mathbb R^n)$ since $\omega \mapsto (1 + \lVert \omega\rVert^2)^{1/2} \hat h(\omega) \in L^2(\mathbb R^n)$ and $\lvert \omega_j \rvert \leq C(1 + \lVert \omega\rVert^2)^{1/2}$ for a constant $C$. As a result, $h$ lies in $E_j$. By properties of one-parameter unitary groups (e.g., \cite[Proposition~6.5]{Schmudgen12}), the function $u(t) = T_{te_j} h$ lies in $C^1(\mathbb R, L^2(\mathbb R^n))$; that is, $u'(t) = \lim_{s\to 0}(T_{(t+s)e_j} h - T_{te_j} h) /s$ is well-defined and continuous at every $t \in \mathbb R$. As a result, for every $f \in L^2(\mathbb R^n)$ and $x \in \mathbb R^n$, the function $t \mapsto \varphi((T_{t e_j} h)^\checkmark)$ with $\varphi h = \int_{\mathbb R^n} f(y) h(y) \, dy$ lies in $C^1(\mathbb R)$. Writing $x = \sum_{j=1}^n x_j e_j $ with $x_j \in \mathbb R$, we conclude that  $x \mapsto \varphi((T_x h)^\checkmark) = \varphi((T_{x_1 e_1} \circ \cdots \circ T_{x_n e_n} h)^\checkmark)$ lies in $C^1(\mathbb R^n)$.

    Next, for $ r\geq 1$, we have $(1 + \lVert \omega\rVert^2)^{r/2} \geq C \lvert \omega_j\rvert (1 + \lVert \omega\rVert^2)^{(r-1)/2}$ which implies that $D_j h \in H^{r-1}(\mathbb R^n)$ whenever $h \in H^r(\mathbb R^n)$. Using this fact, we show by induction that $u(t) = T_{te_j} h$ lies in $C^r(\mathbb R, L^2(\mathbb R^n))$; recursively, this means that $u'(t) = \lim_{s\to 0}(T_{(t+s)e_j} h - T_{te_j} h) /s$ lies in $C^{r-1}(\mathbb R, L^2(\mathbb R^n))$. We have already proved the base case, $r=1$, so suppose that for $r-1 \in \mathbb N_0$ it holds that for every $g \in H^{r-1}(\mathbb R^n)$, $w(t) = T_{te_j} g$ lies in $C^{r-1}(\mathbb R, L^2(\mathbb R^n))$. In that case, for $h \in H^r(\mathbb R^n)$ and $u(t) = T_{te_j} h$ we have $u'(t) = D_j T_{te_j} h = T_{te_j} D_j h = T_{te_j} g$ with $g = D_j h \in H^{r-1}(\mathbb R^n)$. We therefore deduce that $u' \in C^{r-1}(\mathbb R, L^2(\mathbb R^n))$ and thus that $u \in C^r(\mathbb R, L^2(\mathbb R^n))$. With this result and $f \in L^2(\mathbb R^n)$, $\varphi \in L^{2*}(\mathbb R^n)$, and $x=\sum_{j=1}^n x_j e_j$ as above, it follows that $t \mapsto \varphi((T_{t e_j} h)^\checkmark)$ lies in $C^r(\mathbb R)$ and $x \mapsto \varphi((T_x h)^\checkmark) = \varphi((T_{x_1 e_1} \circ \cdots \circ T_{x_n e_n} h)^\checkmark)$ lies in $C^r(\mathbb R^n, \mathbb R)$.
\end{proof}

\begin{proof}[Proof of \cref{lem:mult_conv_s}]
    Let $A$ be an upper bound for $\lVert f\rVert_{L^\infty(\Omega)}$ and $\lVert f_\epsilon\rVert_{L^\infty(\Omega)}$ and fix $g \in L^p(\Omega)$. For every $\delta >0$, there exists $\tilde g \in C_c(\Omega)$ such that $\lVert g - \tilde g\rVert_{L^p(\Omega)} < \delta$. We have
    \begin{displaymath}
        \begin{aligned}
            \lVert M_{f_\epsilon} g - M_f g\rVert_{L^p(\Omega)}
            &= \lVert (f_\epsilon - f) g\rVert_{L^p(\Omega)} \\
            & \leq \lVert (f_\epsilon - f)(g - \tilde g)\rVert_{L^p(\Omega)} + \lVert (f_\epsilon - f) \tilde g\rVert_{L^p(\Omega)}\\
            & \leq \lVert f_\epsilon - f\rVert_{L^p(\Omega)} \delta + \lVert f_\epsilon - f\rVert_{L^p(\Omega)} \lVert \tilde g\rVert_\infty\\
            & \leq 2 A \delta + \lVert f_\epsilon - f\rVert_{L^p(\Omega)} \lVert \tilde g\rVert_\infty.
        \end{aligned}
    \end{displaymath}
    The claim follows from the fact that $\delta$ was arbitrary and $\lim_{\epsilon\to 0} \lVert  f_\epsilon - f\rVert_{L^p(\Omega)} = 0$.
\end{proof}

\bibliographystyle{siamplain}
\bibliography{refs,bibliography_dg}

\end{document}